\documentclass[10pt,reqno]{amsart}
\usepackage[top=30truemm,bottom=30truemm,left=25truemm,right=25truemm]{geometry}

\usepackage{amssymb, amsmath, amsthm, amsfonts, latexsym, mathtools, amsfonts}
\usepackage{graphicx}
\usepackage{xcolor}
\usepackage{ytableau}
\usepackage{tikz, comment}
\usepackage{amscd}
\usepackage{dsfont}
\usepackage{arydshln}
\usepackage[normalem]{ulem}
\usepackage{mathtools}
\usepackage{thm-restate}

\usepackage{hyperref}
\usepackage[nameinlink]{cleveref}
\hypersetup{
setpagesize=false,
bookmarksnumbered=true,%
bookmarksopen=true,%
colorlinks=true,%
linkcolor=orange,
citecolor=blue,
}

\crefname{equation}{}{}

\newtheorem{thm}{Theorem}[section]
\crefname{thm}{Theorem}{Theorems}

\crefname{prop}{Proposition}{Propositions}
\newtheorem{lem}[thm]{Lemma}
\crefname{lem}{Lemma}{Lemmas}
\newtheorem{cor}[thm]{Corollary}
\crefname{cor}{Corollary}{Corollarys}
\newtheorem{conj}[thm]{Conjecture}
\crefname{conj}{Conjecture}{Conjectures}

\theoremstyle{definition}
\newtheorem*{definition}{Definition}

\newtheorem*{remark}{Remark}

\crefname{section}{Section}{Sections}

\NewDocumentEnvironment{restatedthm}{m O{Restated}}
{
  \par\addvspace{\topsep}
  \hypersetup{linkcolor=black}
  \noindent
  \textbf{\cref{#1} (#2).}
  \hypersetup{linkcolor=orange}~
  \itshape
}
{
  \par\addvspace{\topsep}
}

\newcommand{\til}{\widetilde}

\newcommand{\vphi}{\varphi}
\newcommand{\ep}{\varepsilon}

\newcommand{\Img}{\operatorname{Im}}

\newcommand{\Real}{\operatorname{Re}}

\newcommand{\ceq}{\coloneq}

\newcommand{\sgn}{\operatorname{sgn}}

\newcommand{\CC}{\mathbb{C}}
\newcommand{\RR}{\mathbb{R}}
\newcommand{\QQ}{\mathbb{Q}}
\newcommand{\ZZ}{\mathbb{Z}}

\newcommand{\HH}{\mathbb{H}}
\newcommand{\GG}{\mathbb{G}}

\numberwithin{equation}{section}

\begin{document}

\title{Zeros of quasimodular forms defined by iterated sums}
\author{K. Kina}
\address{Graduate School of Mathematics, Kyushu University, Motooka 744, Nishi-ku, Fukuoka 819-0395 Japan}
\email{kkina@math.kyushu-u.ac.jp}
\author{G. Shin}
\address{KNU Research Institute of Mathematical Sciences, Kangwon National University, Chuncheon 24341, Republic of Korea}
\email{gshin@kangwon.ac.kr}

\begin{abstract}
We study the zeros of the quasimodular forms $G_{\{2\}^n}$ defined by iterated sums. We first show that, for every $n>0$, $G_{\{2\}^n}$ has exactly $n$ simple zeros on each of the vertical half-lines $\Real(\tau)=0$ and $\Real(\tau)=1/2$, and that the zeros for consecutive values of $n$ satisfy an interlacing property. The proof is based on an expression of $G_{\{2\}^n}$ in terms of the $n$-th derivative of $\eta^3$ and on the theory of bell-shaped functions, rather than on Rankin--Swinnerton-Dyer method. We also determine the asymptotic behavior of these zeros as $n\to\infty$. In addition, we prove that all zeros of $G_{\{2\}^n}$ are simple and that $G_{\{2\}^n}$ has infinitely many $SL_2(\ZZ)$-inequivalent zeros. We further show that quasimodular forms of maximal depth have no zeros at CM points. In particular, none of the zeros of $G_{\{2\}^n}$ are CM points. Finally, in the special case $G_{2,2}$, we show that each Ford circle contains exactly two distinct simple zeros.
\end{abstract}

\maketitle


\section{Introduction}

The study of zeros of quasimodular forms has largely developed in two directions: the distribution of zeros and the transcendence of zeros. In this paper, we study the distribution and transcendence of zeros of quasimodular forms defined by iterated sums.

\subsection{The zeros of Eisenstein series and RSD method}
Rankin and Swinnerton-Dyer proved in \cite{RS70} that, for even $k\geq 4$, all the zeros of the Eisenstein series
$$\GG_{k}(\tau) \ceq \underset{(m,n)\neq(0,0)}{\sum_{m\in\ZZ}\ \sum_{n\in\ZZ}} \frac{1}{(m\tau+n)^k},$$
in the standard fundamental domain
$$\mathcal{F}\ceq \{\tau\in\HH\mid -1/2\leq \Real \tau\leq 0,~|\tau|\geq 1\}
\cup \{\tau\in\HH\mid 0< \Real \tau< 1/2,~|\tau|> 1\}$$
lie on the arc of the unit circle given by $\{e^{it} \mid \pi/2\leq t\leq 2\pi/3\}$. Here, $\HH$ is the upper-half plane. We briefly recall their argument. Define the function $F_k:\RR\to\RR~;~F_k(t)=e^{ikt/2}\GG_k(e^{it})$. Then, for each integer $m\in[k/4, k/3]$, one has $(-1)^m F_k(2m\pi/k)>0$. The resulting sign changes give a lower bound for the number of zeros of $\GG_k$ on the arc. Combining this with the valence formula, which states that the weighted sum of the orders of the zeros in the fundamental domain is $k/12$, one obtains their result. We refer to this argument as the RSD method.

Miezaki, Nozaki, and Shigezumi in \cite{MNS07} applied RSD method to the study of zeros of Eisenstein series for the Fricke groups $\Gamma_0^*(2)$ and $\Gamma_0^*(3)$. Moreover, Gun and Oesterl\'{e} in \cite{GO22} adapted the RSD method to prove that, for even $k\geq 4$, all the zeros of $\GG_k'$ in $\mathcal{F}$ lie on the line $\Real(\tau)=-1/2$. Sugibayashi in \cite{Sug26} obtained an analogous result for the derivatives of Eisenstein series for $\Gamma_0^*(2)$.

As another interesting development, van Ittersum and Ringeling in \cite{IR25} obtained a result analogous to that of \cite{GO22} for Eisenstein series of odd weight, which are not even quasimodular forms.

Our \cref{thm:num-zero} is regarded as an analogue of their result. However, a novel feature of our result is that it is obtained without using RSD method employed in their work.

\subsection{The zeros of depth one quasimodular forms}
Although relatively little is known about the zeros of quasimodular forms of arbitrary depth, several results have been obtained in the depth-one case.

El Basraoui and Sebbar \cite{BS10} studied the zeros of $\GG_2$ and proved that it has infinitely many zeros in the half-strip $\{\tau\in\HH\mid -1/2\leq \Real(\tau)<1/2\}$, all of which are simple and pairwise $SL_2(\ZZ)$-inequivalent. Subsequently, Imamoglu, Jermann, and T\'{o}th \cite{IJT14} and, independently, Wood and Young \cite{WY14} refined these results by showing that each Ford circle contains exactly one simple zero of $\GG_2$, that $\GG_2$ has no other zeros in the upper half-plane, and by obtaining more precise estimates for the location of the zero in each Ford circle. For further related results, see also \cite{SS12}, \cite{BG12}, and \cite{Meh13}.

Valence formulas for derivatives of Eisenstein series, which provide typical examples of depth-one quasimodular forms, were subsequently established in \cite{GO22,Sug26}. Van Ittersum and Ringeling \cite{IR24} established a valence formula for arbitrary quasimodular forms of depth one and also considered quasimodular forms of arbitrary depth. More recently, Im and Lee \cite{IL26} studied real zeros of depth-one quasimodular forms and obtained several results extending those of \cite{GO22}.

\subsection{quasimodular forms and multiple Eisenstein series}
quasimodular forms were introduced by Kaneko and Zagier \cite{KZ95}. The ring of quasimodular forms for $SL_2(\ZZ)$ is given by $\widetilde{M}=\CC[G_2,G_4,G_6]$. Moreover, $G_2,G_4,G_6$ are algebraically independent over $\CC$. Hence, the natural homomorphism
$$\CC[X,Y,Z]\longrightarrow \til{M},
\qquad
P(X,Y,Z)\longmapsto P(G_2,G_4,G_6),
$$
is an isomorphism. In particular, every quasimodular form admits a unique expression as a polynomial in $G_2,G_4,G_6$. Under this isomorphism, differentiation with respect to $G_2$ is defined as differentiation with respect to $X$ on $\CC[X,Y,Z]$. We say that a quasimodular form $f$ has weight $k$ if $f=P(G_2,G_4,G_6)$, where $P(X,Y,Z)\in\CC[X,Y,Z]$ is weighted homogeneous of weight $k$, that is, $P(t^2X,t^4Y,t^6Z)=t^kP(X,Y,Z)$. The depth of $f$ is the degree of $P$ with respect to $X$.

Multiple Eisenstein series were introduced in \cite{GKZ06} as a
generalization of the ordinary Eisenstein series $G_k$ via iterated
sums. More precisely, let $r\in\mathbb{Z}_{>0}$,
$k_1,\ldots,k_r\in\mathbb{Z}_{\geq 2}$, and $\tau\in\mathbb{H}$.
Set $\mathbb{Z}_N:=\{n\in\mathbb{Z}\mid |n|<N\}$.
We define an order $\prec$ on the lattice
$\mathbb{Z}\tau+\mathbb{Z}$ by
\begin{align*}
0\prec m\tau+n &\overset{\text{def}}{\Longleftrightarrow}
\left\{\begin{array}{c}
m>0
\\
\text{or}
\\
m=0 \quad \text{and}\quad n>0
\end{array}\right. ,
\intertext{and}
m_1\tau+n_1\prec m_2\tau+n_2 &\overset{\text{def}}{\Longleftrightarrow}
0\prec (m_2-m_1)\tau+(n_2-n_1).
\end{align*}
The multiple Eisenstein series of depth $r$ and index
$(k_1,\ldots,k_r)$ is given by the iterated sum
$$G_{k_1,\ldots,k_r}(\tau) \ceq
  \lim_{M\to\infty}\lim_{N\to\infty}
  \sum_{\substack{
    w_s\in\mathbb{Z}_M\tau+\mathbb{Z}_N\\
    0\prec w_1\prec\cdots\prec w_r
  }}
  \frac{1}{w_1^{k_1}\cdots w_r^{k_r}}.$$
The space spanned by these functions has been studied from various perspectives; see, for example, \cite{Bac26} and the references therein. In this paper, we consider only multiple Eisenstein series $G_{\{2\}^n}$ whose indices have all entries equal to $2$. Accordingly, we can regard the following formula, due to Hoffman and Ihara \cite{HI17}, as the definition of $G_{\{k\}^n}$.
\begin{equation}\label{eq:def-G}
\sum_{n=0}^{\infty} G_{\{k\}^n}(\tau)X^{n}
= \exp\left(\sum_{m=1}^{\infty}(-1)^{m-1}\frac{G_{km}(\tau)}{m}X^{m}\right).
\end{equation}
Here, for an integer $k\geq 2$,
$$G_{k}(\tau) \ceq \zeta(k) + \frac{(2\pi i)^{k}}{(k-1)!}\sum_{n=1}^{\infty}\sigma_{k-1}(n)q^n\ ,
\quad \text{where}\quad
\sigma_{k-1}(n) = \sum_{\substack{d>0\\d|n}} d^{k-1}$$
and $q=e^{2\pi i\tau}$. Note that $2G_k=\GG_k$ when $k$ is even. For example, we have
$$G_{\{2\}^0}
=1\ ,\quad  G_{2,2} = \frac{1}{2}(G_2^2 - G_4)\ ,\quad  G_{2,2,2} = \frac{1}{6}G_2^3 - \frac{1}{2}G_2G_4 + \frac{1}{3}G_6.$$
It follows from \cref{eq:def-G} that
\begin{equation}\label{eq:der-G}
\frac{d}{dG_2}G_{\{2\}^n} = G_{\{2\}^{n-1}}
\end{equation}
and hence $G_{\{2\}^n}$ is a quasimodular form of weight $2n$ and depth $n$. Moreover,
$$\QQ[G_2,G_4,G_6]=\QQ[G_2,G_{2,2},G_{2,2,2}].$$

\subsection{Transcendence of quasimodular forms}
Transcendence questions for quasimodular forms have been studied from various perspectives. A classical Schneider theorem \cite{Sch37} states that if $\tau\in\HH$ is algebraic but not a CM point, then $j(\tau)$ is transcendental. In a different direction, Nesterenko in \cite{Nes96} established a remarkable algebraic independence result for Eisenstein series. These results have become fundamental tools in the transcendence theory of quasimodular forms. These ideas have been further developed to study special values and zeros of quasimodular forms.

\subsection{Summary of our results}
In this article, we are interested in the zeros of the quasimodular form $G_{\{2\}^n}$ of weight $2n$ and depth $n$. By \cref{lem:eta-G}, which will be stated later, studying the zeros of $G_{\{2\}^n}$ is equivalent to studying the zeros of the $n$-th derivative of $\eta^3$. It is easy to see from the identities
$$G_{\{2\}^n}(\tau)=G_{\{2\}^n}(\tau+1)
\quad\text{and}\quad
G_{\{2\}^n}(-\overline{\tau})=\overline{G_{\{2\}^n}(\tau)}$$
that the zeros of $G_{\{2\}^n}$ are periodic with period $1$ and symmetric with respect to the imaginary axis. Our results are as follows.

\begin{thm}\label{thm:num-zero}
For each integer $n>0$, the function $G_{\{2\}^n}$ has exactly $n$ simple zeros lying on $\{iy\mid y\in \RR_{>0}\}$ and $\{1/2+iy\mid y\in \RR_{>0}\}$, respectively.
\end{thm}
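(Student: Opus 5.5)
The plan is to reduce the statement to a sign-change count for one real function on each half-line, and to get that count from the theory of bell-shaped functions.

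\textbf{Reduction to derivatives of $\eta^3$.} I will use \cref{lem:eta-G} in the form $G_{\{2\}^n}=c_n\,\eta^{-3}D^n(\eta^3)$, where $D=q\,\frac{d}{dq}=\frac{1}{2\pi i}\frac{d}{d\tau}$ and $c_n\neq 0$. This is what one gets from \cref{eq:def-G}: write the generating series as a product over the lattice, compare it with $\theta_1(z,\tau)/(2\pi\eta^3 z)$, and use the heat equation. Since $\eta$ has no zeros in $\HH$, the zeros of $G_{\{2\}^n}$, with multiplicities, are those of $D^n(\eta^3)$.

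On the line $\tau=iy$ we have $D=-\frac{1}{2\pi}\frac{d}{dy}$. Jacobi's identity gives
$$f(y)\ceq\eta(iy)^3=\sum_{m\ge0}(-1)^m(2m+1)e^{-\pi y(2m+1)^2/4},$$
which is real. So the claim on the imaginary axis becomes: $f^{(n)}$ has exactly $n$ zeros on $(0,\infty)$, and they are simple.

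\textbf{The imaginary axis.} The modular relation $\eta(i/y)=\sqrt y\,\eta(iy)$ gives
$$f(y)=y^{-3/2}\sum_{m\ge0}(-1)^m(2m+1)e^{-\pi(2m+1)^2/(4y)}.$$
Up to a linear rescaling of $y$, this is the classical density of the exit time of Brownian motion from a symmetric interval. It is therefore a positive density that is flat to all orders at $0^+$, so extending it by $0$ on $(-\infty,0]$ gives a $C^\infty$ function on $\RR$.

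I would then invoke the Kwaśnicki--Simon characterisation: hitting-time densities of this type, equivalently convolutions of Pólya frequency functions with absolutely-monotone-then-completely-monotone functions, are bell-shaped. Bell-shaped means that $f^{(n)}$ changes sign exactly $n$ times, and this gives at least and in fact exactly $n$ sign changes of $D^n\eta^3$ on $i\RR_{>0}$.

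For exact zero counts and simplicity I would argue by induction using Rolle's theorem. Suppose $f^{(n)}$ has exactly $n$ zeros $a_1<\dots<a_n$, all simple. Since $f^{(n)}\to0$ at $0^+$ and at $\infty$, Rolle produces zeros of $f^{(n+1)}$ in $(0,a_1)$, in each $(a_i,a_{i+1})$, and in $(a_n,\infty)$, so at least $n+1$ zeros strictly interlacing the $a_i$. If one of these were a multiple zero, or if there were an additional zero, then integrating back would give $f^{(n)}$ more than $n$ sign changes or a non-simple zero. To rule that out I need the stronger form of bell-shapedness: $f^{(n)}$ has exactly $n$ zeros counted with multiplicity. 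Kwaśnicki's holomorphic representation gives this, since it shows that $f^{(n)}$ is a positive combination whose zero count is controlled by a variation-diminishing kernel. I would check that this stronger version is available; it is the usual form of the proof. This step also yields the interlacing property mentioned in the abstract.

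\textbf{The line $\Real\tau=1/2$ (main obstacle).} Here $q=-e^{-2\pi y}$, and $D$ is again a real multiple of $\frac{d}{dy}$. The function becomes
$$g(y)=\sum_{m\ge0}\varepsilon_m(2m+1)e^{-\pi y(2m+1)^2/4},$$
up to the unimodular constant $e^{\pi i/8}$. The signs $\varepsilon_m=(-1)^m(-1)^{((2m+1)^2-1)/8}$ follow a character pattern modulo $8$.

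To identify $g$, I would write $\eta(\tau+1/2)=e^{\pi i/24}\eta(2\tau)^3/(\eta(\tau)\eta(4\tau))$ and apply a suitable Atkin--Lehner or Fricke-type involution. This should give $g(y)=c\,y^{-3/2}h(1/y)$ with $h$ again a positive theta-type series. I would then try to recognise the result as another first-passage density, for example of Brownian motion from a non-central point or of a Bessel(3) process, or else verify the Kwaśnicki--Simon criterion for it directly. Once bell-shapedness of $g$ is established, the counting and simplicity argument is identical to the one on the imaginary axis.

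I expect this identification, that is, showing $g$ is bell-shaped and not merely positive, to be the hardest step. Positivity of $g$ on $(0,\infty)$, which is needed anyway, follows from the product formula for $\eta$.
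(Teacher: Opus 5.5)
You have the right framework (reduce to the restriction of $D^n(\eta^3)$ to each half-line and use bell-shapedness), and the imaginary axis is fine in substance. To make your citation precise there: after rescaling $y$, the Laplace transform of the zero-extended function is $1/\cosh(\frac{\pi}{2}\sqrt{s})=\prod_{m\ge0}(1+s/(2m+1)^2)^{-1}$. So it is a P\'olya frequency function, which is the case of \cref{thm:Kwa-main} with integer-valued nondecreasing $\vphi_0$, as in \cref{lem:0-bell-shaped}.

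The genuine gap is the half-line $\Real\tau=1/2$, which you flag but do not prove, and your proposed route does not reach it. The quotient $\eta(2\tau)^3/(\eta(\tau)\eta(4\tau))$ is invariant, up to its automorphy factor, under $\tau\mapsto-1/(4\tau)$. So the Fricke involution only returns the self-duality $g(y)=2^{-3/2}y^{-3/2}g(1/(4y))$. This gives smoothness at $0^+$ and, as you guessed, the image-method formula for the exit time of Brownian motion from $(-a,a)$ started at $a/2$. But neither this identification nor positivity yields bell-shape, and the P\'olya frequency argument breaks down here: by \cref{lem:partial-cosh}, the Laplace transform of the rescaled function is $\cosh(\frac{\pi}{4}\sqrt{s})/\cosh(\frac{\pi}{2}\sqrt{s})$, which has zeros.

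The missing step is to apply \cref{thm:Kwa-main} directly to this transform. The Weierstrass products turn it into $\prod_{m\ge0}\bigl(1+\frac{s}{4(2m+1)^2}\bigr)\bigl(1+\frac{s}{(2m+1)^2}\bigr)^{-1}$. This is the representation \cref{eq:Fourier-exp-Kwa} with the non-monotone function $\vphi_{1/2}(s)=\#\{m:(2m+1)^2\le s\}-\#\{m:4(2m+1)^2\le s\}$, so condition (a) must be checked by hand. In the variable $u=\sqrt{s}$, the jumps are $+1$ at odd $u$ and $-1$ at $u\equiv 2\pmod 4$. Hence $\vphi_{1/2}\le k$ for $u<4k+1$ and $\vphi_{1/2}\ge k$ for $u\ge 4k+1$, so $\vphi_{1/2}-k$ changes sign at most once for every $k\ge 0$ (the case $k<0$ is trivial). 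Without this, or an equivalent bell-shape proof for that exit-time density, half of the theorem is unproved.

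A second, smaller gap is the step from sign changes to exactly $n$ simple zeros. Extra zeros of $f^{(n+1)}$ do not ``integrate back'' to extra sign changes of $f^{(n)}$. The stronger form of bell-shapedness you then invoke is neither verified nor needed; argue one order higher instead.

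Since $f^{(n)}$ changes sign $n$ times, it has at least $n$ zeros. Suppose it had more than $n$ zeros, or a multiple zero. Then Rolle's theorem, using $f^{(n)}\to0$ at $0^+$ and at $\infty$, gives $n+2$ distinct zeros of $f^{(n+1)}$. In each of the $n+3$ complementary intervals, pick $c$ with $f^{(n+1)}(c)\neq 0$; this is possible by real-analyticity. The mean value theorem on either side of $c$ (using the limits at the two ends for the unbounded pieces) gives points where $f^{(n+2)}$ has sign $\sgn f^{(n+1)}(c)$ and then $-\sgn f^{(n+1)}(c)$. This produces at least $n+3$ sign changes of $f^{(n+2)}$, contradicting bell-shapedness at order $n+2$. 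This is the paper's argument, and it works verbatim on both half-lines once bell-shapedness is established.
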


The proof of the result does not use RSD method, which has been highly effective in previous studies. This is a notable difference from related works. A key ingredient in our proof is a result of Kwa\'{s}nicki \cite{Kwa20} concerning bell-shaped functions.

Interlacing of zeros arises naturally in the study of Eisenstein series. Nozaki \cite{Noz08} proved that the zeros of $E_k$ and $E_{k+12}$ interlace on the lower arc of the standard fundamental domain, and this result was later generalized by Griffin et al. \cite{GKPVXZ21} to pairs $E_k$ and $E_{k+a}$. Throughout this article, we say that two finite subsets $A,B\subset\RR$ have the interlacing property if $\bigl||A|-|B|\bigr|=1$ and, when their elements are arranged in increasing order, the elements of $A$ and $B$ occur alternately. The following corollary, which is established in the course of the proof of the preceding theorem, shows that the same phenomenon also occurs in our setting.

\begin{cor}
Set $S_0^{(n)}\ceq\{y\in\RR_{>0}:G_{\{2\}^n}(iy)=0\}$ and $S_{1/2}^{(n)}\ceq\{y\in\RR_{>0}:G_{\{2\}^n}(1/2+iy)=0\}$. Then for all $n\ge1$ and $*\in\{0,1/2\}$, the sets $S_*^{(n)}$ and $S_*^{(n+1)}$ have the interlacing property.
\end{cor}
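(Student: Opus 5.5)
The plan is to derive the interlacing directly from the mechanism behind \cref{thm:num-zero}, using Rolle's theorem together with the exact zero counts. By \cref{lem:eta-G}, on each line $\Real(\tau)=*$ the zeros of $G_{\{2\}^n}$ should coincide with the zeros of $y\mapsto g_*^{(n)}(y)$. Here $g_*(y)$ is a real-valued smooth function on $(0,\infty)$, namely $\eta(*+iy)^3$ up to a nonzero constant phase. For $*=0$ this follows from $\eta^3=\sum_{k\ge0}(-1)^k(2k+1)q^{(2k+1)^2/8}$ with $q=e^{-2\pi y}$ real. For $*=1/2$, the exponent $(2k+1)^2/8=k(k+1)/2+1/8$ shows that $q^{(2k+1)^2/8}$ equals a fixed phase $e^{\pi i/8}$ times the real number $(-1)^{k(k+1)/2}e^{-2\pi y(2k+1)^2/8}$. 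Differentiation in $\tau$ along the vertical line is $-i\,d/dy$, so the $n$-th $\tau$-derivative is a constant multiple of $g_*^{(n)}$. In the proof of \cref{thm:num-zero} (via Kwa\'snicki's theory of bell-shaped functions), one shows that $g_*^{(n)}$ has exactly $n$ zeros on $(0,\infty)$, all simple, so $|S_*^{(n)}|=n$ and $|S_*^{(n+1)}|=n+1$.

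Next I would record the boundary behaviour: $g_*^{(m)}(y)\to0$ as $y\to\infty$ and as $y\to0^+$, for every $m\ge0$. At $\infty$ this is immediate from the $q$-expansion, since every term carries at least the factor $e^{-\pi y/4}$ and differentiation only produces polynomial factors. At $0$ I would use the transformation $\eta(-1/\tau)=\sqrt{\tau/i}\,\eta(\tau)$, applied to $\tau=iy$ and, for $*=1/2$, after conjugating $1/2+iy$ by a suitable element of $SL_2(\ZZ)$ sending the cusp $1/2$ to $\infty$. This gives $g_*(y)$ as an elementary factor (a power of $y$) times a series in $e^{-c/y}$ with $c>0$, so all $y$-derivatives decay like $e^{-c'/y}$. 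This is the analytic input that makes bell-shapedness work, and it should already be established in the proof of \cref{thm:num-zero}.

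Now order $S_*^{(n)}=\{y_1<\dots<y_n\}$ and set $y_0=0$, $y_{n+1}=\infty$. On each of the $n+1$ intervals $(y_j,y_{j+1})$ the function $g_*^{(n)}$ vanishes at both ends, where at $0$ and $\infty$ this is in the limiting sense. Hence the generalized Rolle theorem gives a zero of $g_*^{(n+1)}$ in the open interval. This produces $n+1$ zeros of $g_*^{(n+1)}$, one in each interval. Since $|S_*^{(n+1)}|=n+1$ exactly, each interval contains exactly one element of $S_*^{(n+1)}$.

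It remains to check that no element of $S_*^{(n+1)}$ coincides with some $y_j$; this follows from the simplicity of the zeros of $g_*^{(n)}$ in \cref{thm:num-zero}, which gives $g_*^{(n+1)}(y_j)\ne0$. Therefore the elements alternate as $z_1<y_1<z_2<\dots<y_n<z_{n+1}$, and $\bigl||S_*^{(n+1)}|-|S_*^{(n)}|\bigr|=1$, which is the interlacing property. The main obstacle is not this Rolle step but the input it relies on: the exact count $n+1$ rather than merely a lower bound. That count comes from the bell-shaped property, and for $*=1/2$ it requires verifying the correct real normalization and the decay at $0$. If a direct verification at $*=1/2$ were problematic, I would first try to reduce that case to the $*=0$ analysis via the modular transformation above.
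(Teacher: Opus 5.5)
Your proposal is correct and follows essentially the paper's route: the corollary is read off from the proof of \cref{thm:num-zero}. There, three facts combine. The $n$-th derivative of the (rescaled) restriction of $\eta^3$ to the line has exactly $n$ simple zeros, all its derivatives vanish at $0$ and $\infty$, and Rolle's theorem then places one zero of the $(n+1)$-st derivative in each of the $n+1$ gaps, which is exactly your counting argument. Working with $\eta(*+iy)^3$ instead of the paper's rescaled $F_*(y)$ is immaterial. Your fallback of reducing $*=1/2$ to $*=0$ by a modular transformation is not needed, and it would not work directly: $D^n(\eta^3)$ is not modular, and no element of $SL_2(\ZZ)$ carries the line $\Real\tau=1/2$ onto the imaginary axis.
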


Let $iy^{(0,n)}_1,\dots,iy^{(0,n)}_n$ and $\frac{1}{2}+iy^{(1/2,n)}_1,\dots,\frac{1}{2}+iy^{(1/2,n)}_n$ be the zeros appearing in \cref{thm:num-zero}, respectively, ordered so that $y^{(*,n)}_1>\dots>y^{(*,n)}_n$ for $*\in\{0,1/2\}$. As we explain in \Cref{sec:bell-shaped}, for each fixed $r$,  Corollary 1.6 in \cite{KS22} implies that the sequence $\{y^{(*,n)}_r/n\}_{n\geq r}$ is bounded and therefore has an accumulation point. In fact, we show that this sequence converges to the following value.

\begin{thm}\label{thm:lim-zero}
Set $\kappa=1$ if $*=0$, and $\kappa=2$ if $*=1/2$. For each fixed integer $r>0$, we have
$$\lim_{n\to \infty} \frac{y_{r}^{(*,n)}}{n}= \frac{1}{\pi \kappa r}\log\left(\frac{2\kappa r+1}{2\kappa r-1}\right).$$
More precisely, we have
$$\lim_{n\to \infty} \left( y_{r}^{(*,n)} - \left(n+\frac{1}{2}\right)\frac{1}{\pi \kappa r}\log\left(\frac{2\kappa r+1}{2\kappa r-1}\right)\right) = 0.$$
\end{thm}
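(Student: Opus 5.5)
By \cref{lem:eta-G}, the zeros of $G_{\{2\}^n}$ on the two half-lines are the zeros of the $n$-th $\tau$-derivative of $\eta^3$ restricted to those lines. I would work from Jacobi's identity $\eta(\tau)^3=\sum_{m\ge0}(-1)^m(2m+1)q^{(2m+1)^2/8}$. Writing $j=2m+1$ and applying $(q\,d/dq)^n$, the zeros $y$ are those of the real function
\[
f_n^{(*)}(y)=\sum_{m\ge0}\epsilon_m^{(*)}\,(2m+1)^{2n+1}e^{-\pi(2m+1)^2y/4}.
\]
Here $\epsilon^{(0)}_m=(-1)^m$ on $\Real\tau=0$. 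On $\Real\tau=1/2$ we have $e^{\pi i j^2/8}=e^{\pi i/8}(-1)^{(j^2-1)/8}$, so after removing the constant phase, $\epsilon^{(1/2)}_m=(-1)^m(-1)^{m(m+1)/2}$, which gives the sign pattern $++--++--\cdots$.

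The idea is that at the scale $y\asymp n$ the sum is governed by its one or two largest terms. Put $N=2n+1$ and $\phi(j)=N\log j-\pi y j^2/4$, so that the $m$-th term has absolute value $e^{\phi(2m+1)}$.

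The term $j=2\kappa r-1$ and the term $j=2\kappa r+1$ have equal size exactly when
\[
N\log\frac{2\kappa r+1}{2\kappa r-1}=\frac{\pi y}{4}\bigl((2\kappa r+1)^2-(2\kappa r-1)^2\bigr)=2\pi\kappa r\,y,
\]
that is, when $y=Y_r\ceq(n+\tfrac12)\frac{1}{\pi\kappa r}\log\frac{2\kappa r+1}{2\kappa r-1}$. For $\kappa=1$ these are all adjacent pairs. For $\kappa=2$ they are exactly the adjacent pairs of opposite sign ($j=4r\pm1$). The pairs $j=4r+1,4r+3$ have the same sign, so no cancellation, and hence no zero, can occur near their balance points. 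This is how the two values of $\kappa$ arise.

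Fix $R$ and $\delta>0$. I would prove three claims.

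(i) For $y=Y_r+t$ with $|t|\le\delta$ and $r\le R$, write $f_n^{(*)}(y)$ as the two terms $j=2\kappa r\pm1$ plus a remainder. The ratio of the two main terms is exactly $e^{2\pi\kappa r t}$, independent of $n$. The remainder is $O(e^{-cn})$ relative to the main terms.

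(ii) On $[Y_{r+1}+\delta,\,Y_r-\delta]$, and on $[Y_1+\delta,\infty)$, one term, or for $\kappa=2$ a same-sign pair, dominates all the others by a factor $\ge 1+c$. Hence $f_n^{(*)}$ has no zero there.

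(iii) From (i), $f_n^{(*)}(Y_r-\delta)$ and $f_n^{(*)}(Y_r+\delta)$ have opposite signs, so there is a zero in $(Y_r-\delta,Y_r+\delta)$.

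Together with \cref{thm:num-zero} (the zeros are simple, and the largest ones are ordered), this shows that for $n\gg_{R,\delta}1$ the $r$-th largest zero satisfies $|y_r^{(*,n)}-Y_r|<\delta$. This is the second, sharper statement; dividing by $n$ gives the first. One point needs care: (iii) only guarantees an odd number of zeros in each window. I would rule out extra zeros there by noting that the two-term main part is strictly monotone in $t$, with derivative of order $1$ relative to its size, while the derivative of the remainder is also $O(e^{-cn})$ relatively.

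The main obstacle is the uniform tail bound behind (i) and (ii): the remaining infinitely many terms must be negligible uniformly for $y$ in a range of width $\asymp n$. I would use the concavity of $\phi$ in $\log j$, or directly in $j$. For $y\asymp n$ its maximizer $j_*=\sqrt{2N/(\pi y)}$ stays bounded. Each term beyond the two nearest to $j_*$ is smaller than the nearer of those two by a factor $e^{-c(n)}$ with $c(n)\gg n$, and the terms decay super-geometrically as $j\to\infty$ because of the Gaussian factor. Summing gives the remainder $O(e^{-c n})$, with $c$ depending on $R$ and $\delta$. The differences $\phi(j')-\phi(j)$ are linear in $n$ with coefficients that are continuous in $y/n$, so compactness of the relevant range of $y/n$ makes this uniform.
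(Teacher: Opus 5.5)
Your proposal is correct and is essentially the paper's argument: both expand $D^n(\eta^3)$ by Jacobi's identity, locate the $r$-th zero where the adjacent opposite-sign terms $j=2\kappa r\pm1$ balance, and exclude other zeros because a single term (or, for $\kappa=2$, a same-sign pair, which the paper leaves implicit) dominates. The differences are only bookkeeping: the paper rescales $y=n\tilde y$, balances $e^{n\Phi_m}$, and gets the shift $\tfrac12$ from the zero $t_r=x_r/2$ of the limiting two-term function, whereas your exponent $2n+1$ puts the balance exactly at $Y_r$; your uniform remainder estimates on the windows and gaps also make rigorous what the paper derives only pointwise from \cref{lem:sgn-series}.
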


More generally, we have the following.
\begin{thm}\label{thm:lim-zero-gen}
For any integers $r>0$ and $j$, there exists a sequence $\{z_{r,j}^{(n)}\}_{n\geq r}$ of zeros of $G_{\{2\}^n}$, such that
$$\lim_{n\to \infty} \left( z_{r,j}^{(n)} - \left(\frac{j}{r} + i\left(n+\frac{1}{2}\right)\frac{1}{\pi r}\log\left(\frac{2r+1}{2r-1}\right)\right)\right) = 0.$$
\end{thm}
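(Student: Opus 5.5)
The plan is to reduce $G_{\{2\}^n}$, via \cref{lem:eta-G}, to a lacunary $q$-series and then locate its zeros by a two-term dominance argument combined with Rouché's theorem. I will use \cref{lem:eta-G} in the form $G_{\{2\}^n}=c_n\,D^n(\eta^3)/\eta^3$, with $c_n\neq0$ and $D=q\,d/dq$. I have not checked this is the lemma's exact normalization; the constants do not matter, only that the zeros of $G_{\{2\}^n}$ in $\HH$ are those of $D^n(\eta^3)$. Jacobi's identity $\eta^3=\sum_{m\ge0}(-1)^m(2m+1)q^{(2m+1)^2/8}$ then shows that the zeros of $G_{\{2\}^n}$ in $\HH$ are exactly the zeros of
\begin{equation*}
f_n(\tau)\ceq\sum_{m\ge0}(-1)^m(2m+1)^{2n+1}e^{\pi i\tau(2m+1)^2/4},
\end{equation*}
since $\eta$ has no zeros there.

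Fix $r>0$ and $j$, and set $L_r=\log\frac{2r+1}{2r-1}$, $Y_n=(n+\tfrac12)\frac{1}{\pi r}L_r$ and $\tau_n=\frac{j}{r}+iY_n$. Write $A_m(\tau)$ for the $m$-th term of $f_n$. A direct computation, using $\frac{(2r+1)^2-(2r-1)^2}{8}=r$, gives
\begin{equation*}
\frac{A_r(\tau)}{A_{r-1}(\tau)}=-\Bigl(\tfrac{2r+1}{2r-1}\Bigr)^{2n+1}e^{2\pi i r\tau}=-e^{2\pi i r(\tau-\tau_n)}.
\end{equation*}
The second equality holds because $(2n+1)L_r=2\pi rY_n$ and $e^{2\pi i r\cdot j/r}=1$. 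Hence, for $w=\tau-\tau_n$, we have $A_{r-1}+A_r=A_{r-1}(\tau)\bigl(1-e^{2\pi i r w}\bigr)$. The bracket has a simple zero at $w=0$ and no other zeros in the disc $|w|\le\ep$ for small $\ep<1/r$.

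The main step is to show that on $|w|\le\ep$ we have $\sum_{m\ne r-1,r}|A_m(\tau)|=o\bigl(|A_{r-1}(\tau)|\bigr)$ uniformly as $n\to\infty$. For this I look at $\phi(m)=(2n+1)\log(2m+1)-\frac{\pi y}{4}(2m+1)^2$, so that $|A_m|=e^{\phi(m)}$ with $y=\Img\tau=Y_n+O(\ep)$. The function $\phi$ is strictly concave in $m$, and by construction $\phi(r)=\phi(r-1)$ up to $O(1)$ when $y=Y_n+O(\ep)$. The maximum of $\phi$ therefore lies between $r-1$ and $r$. The neighbours $m=r-2$ and $m=r+1$ fall below $\phi(r-1)$ by an amount of order $n$, because both $(2n+1)\log$-increments and $\pi y$ are of size $\asymp n$ for fixed $r$. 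Concavity then propagates this gap, so the decrements grow at least linearly in $|m-(r-1/2)|$. The whole remaining sum is thus $O(e^{-cn})\,|A_{r-1}|$, including the infinite tail, where the Gaussian factor dominates.

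Once this bound is in hand, Rouché's theorem on $|w|=\ep$ gives exactly one zero $z^{(n)}_{r,j}$ of $f_n$, hence of $G_{\{2\}^n}$, with $|z^{(n)}_{r,j}-\tau_n|<\ep$ for all large $n$. For this I compare $f_n/A_{r-1}$ with $1-e^{2\pi i r w}$, which is bounded below on the circle. Since $\ep$ is arbitrary, $z^{(n)}_{r,j}-\tau_n\to0$. For the finitely many small $n$ any zero can be chosen, since $G_{\{2\}^n}$ has zeros by \cref{thm:num-zero}.

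The expected obstacle is making the concavity estimate uniform in $m$ and in $w$. One has to check that the $O(1)$ discrepancy $\phi(r)-\phi(r-1)=O(r\ep)$ does not spoil the order-$n$ gaps. One also has to check that the gaps for $m\le r-2$, where only finitely many terms occur, and for $m\ge r+1$, where the terms are summed as a Gaussian-dominated tail, are genuinely $\gg n$. I would handle this by bounding $\phi(m+1)-\phi(m)=(2n+1)\log\frac{2m+3}{2m+1}-\pi y(m+1)$ directly. For $m\ge r$ this is at most $-(2n+1)\bigl(\frac{m+1}{r}L_r-\log\frac{2m+3}{2m+1}\bigr)+O(1)$. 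The bracket is positive and increasing, by strict convexity-type monotonicity of $t\mapsto t\log\frac{2t+1}{2t-1}$, and the case $m\le r-2$ is symmetric.
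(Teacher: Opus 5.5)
Your proposal is correct and follows essentially the paper's route: both isolate the consecutive terms $m=r-1,r$ of the Jacobi expansion of $D^n(\eta^3)$, whose two-term sum vanishes exactly at $\frac{j}{r}+i\bigl(n+\frac12\bigr)\frac{1}{\pi r}\log\frac{2r+1}{2r-1}$, and both then transfer this simple zero by a perturbation argument. The paper does this via uniform convergence of $e^{-nM_r}h_n$ to $\widetilde{H}_r$ and the argument of \cref{thm:lim-zero}; you use Rouch\'e with an explicit concavity bound on the remaining terms, which the paper leaves implicit. One cosmetic slip: $\phi(m+1)-\phi(m)$ contains $2\pi y(m+1)$ rather than $\pi y(m+1)$ (your next bound already uses the correct coefficient), and the perturbation there is $O(\varepsilon(m+1))$ rather than $O(1)$, which is still harmless against the main term of size $(2n+1)(m+1)$.
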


The following two theorems generalize the results of \cite{BS10}.
\begin{thm}\label{thm:simple-zero}
For each integer $n>0$, all zeros of $G_{\{2\}^n}$ are simple.
\end{thm}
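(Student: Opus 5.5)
The plan is to argue by transcendence: a multiple zero would give two algebraic relations among the values $G_2(\tau_0),G_4(\tau_0),G_6(\tau_0)$, and Nesterenko's theorem \cite{Nes96} leaves room for only one. Fix $n>0$ and suppose, for contradiction, that $\tau_0\in\HH$ is a zero of $G_{\{2\}^n}$ of order at least $2$. Write $D=q\,\frac{d}{dq}=\frac{1}{2\pi i}\frac{d}{d\tau}$. By \cref{lem:eta-G}, $G_{\{2\}^n}$ is, up to a nonzero constant, $D^n(\eta^3)/\eta^3$. Since $\eta$ does not vanish on $\HH$, $\tau_0$ is then a multiple zero of $F_n\ceq D^n(\eta^3)$, so $F_n(\tau_0)=F_{n+1}(\tau_0)=0$. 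Translating back through \cref{lem:eta-G} and $D\eta^3/\eta^3=c\,G_2$ (with $c$ an explicit constant), this says that $G_{\{2\}^n}$ and $G_{\{2\}^{n+1}}$ have the common zero $\tau_0$.

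Write $G_{\{2\}^m}=P_m(G_2,G_4,G_6)$, where $P_m\in\QQ[\pi^2][X,Y,Z]$ is weighted homogeneous of weight $2m$. After rescaling by powers of $\pi$, these become polynomials with rational coefficients in $E_2,E_4,E_6$. By \cref{eq:der-G} we have $\partial_X P_{m}=P_{m-1}$, and $P_m$ has leading term $X^m/m!$ in $X$. The key algebraic claim I would prove is that $P_n$ and $P_{n+1}$ have no common factor in $\CC[X,Y,Z]$.

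Granting the claim, the values $(G_2,G_4,G_6)(\tau_0)$ satisfy two coprime weighted-homogeneous equations. Hence the weighted-projective point $[G_2:G_4:G_6](\tau_0)$ is one of finitely many points defined over $\overline{\QQ}$. Here $G_4(\tau_0)$ and $G_6(\tau_0)$ cannot both vanish, since $E_4$ and $E_6$ have no common zero. So there is a single complex number $\omega$ with
\[
E_2(\tau_0),\,E_4(\tau_0),\,E_6(\tau_0)\in\overline{\QQ}\cdot\omega^{2},\ \overline{\QQ}\cdot\omega^{4},\ \overline{\QQ}\cdot\omega^{6}
\]
respectively. Then $\QQ(q_0,E_2(\tau_0),E_4(\tau_0),E_6(\tau_0))$, with $q_0=e^{2\pi i\tau_0}$, has transcendence degree at most $2$, generated by $q_0$ and $\omega$. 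Nesterenko's theorem says this degree is at least $3$, which is the desired contradiction.

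The main obstacle is the coprimality of $P_n$ and $P_{n+1}$. Suppose an irreducible $H$ divides both. Since $P_n=\partial_XP_{n+1}$, $H$ would divide $P_{n+1}$ together with its $X$-derivative. First I would specialise $(Y,Z)$ to a generic point and study the one-variable polynomials $p_m(X)=\sum_{j}\frac{X^j}{j!}P_{m-j}(0,Y,Z)$, which are Appell-type. For these the question becomes whether $p_n$ and $p_{n+1}=\int p_n$ share a root. To rule this out I would try a three-term recurrence for $P_m$, obtained from the product/sigma-function form of the generating function \cref{eq:def-G}: writing $\sum G_{\{2\}^m}X^m$ via $\sigma(z)/z$ at $z=i\sqrt{X}$, the Weierstrass recursion for the Taylor coefficients of $\sigma$ gives such a relation. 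With the recurrence, a common factor would propagate downwards until it divides $P_0=1$.

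If that fails, I would fall back on an analytic route. Take the real specialisation $Y,Z$ coming from the imaginary axis and use the interlacing established in the proof of \cref{thm:num-zero}. That shows that, for an open set of $(Y,Z)$, the polynomials $p_n$ and $p_{n+1}$ have no common root, and by irreducibility considerations this suffices to exclude a common factor $H$.
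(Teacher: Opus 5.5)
\textbf{There is a genuine gap: the coprimality claim is never proved, and it carries all of the content.}

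Your reduction is sound. A multiple zero $\tau_0$ gives $D^n(\eta^3)(\tau_0)=D^{n+1}(\eta^3)(\tau_0)=0$, hence $G_{\{2\}^n}(\tau_0)=G_{\{2\}^{n+1}}(\tau_0)=0$ by \cref{lem:eta-G}. If $P_n$ and $P_{n+1}$ were coprime, $(E_2,E_4,E_6)(\tau_0)$ would lie on a curve defined over $\overline{\QQ}$, contradicting Chudnovsky--Nesterenko.

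The trouble is what the claim amounts to. Since $P_{n+1}$ has constant leading coefficient in $X$ and $P_n=\partial_XP_{n+1}$, a common irreducible factor $H$ must involve $X$, and then $H^2\mid P_{n+1}$. So the claim says exactly that $P_{n+1}$ is squarefree. By \cref{lem:GO-poly}, that already implies every zero of $G_{\{2\}^{n+1}}$ is simple. You have moved the theorem from $n$ to $n+1$ rather than proved it.

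Neither suggested route closes this.
\begin{itemize}
\item There is no recurrence $P_{m+1}=AP_m+BP_{m-1}$ with polynomial coefficients. Already $P_3=\frac16X^3-\frac12XY+\frac13Z$ is not of the form $aX\,P_2+(bX^2+cY)P_1$, since every such expression is divisible by $X$.
\item Weierstrass's recursion for $\sigma$ is a two-index recursion in $(g_2,g_3)$. Rewritten in terms of the $P_m$, it involves derivatives in $Y,Z$, and a common factor of $P_m,P_{m+1}$ does not simply descend to $P_{m-1}$.
\item The interlacing fallback fails. The interlacing in \cref{thm:num-zero} concerns zeros in $\tau$, and the specialisation $(Y,Z)=(G_4(iy),G_6(iy))$ sweeps a real curve, not an open set. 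All it yields is that the finitely many points $(G_2,G_4,G_6)(iy_j)$ with $G_{\{2\}^n}(iy_j)=0$ lie on $\{P_n=0\}$ but not on $\{P_{n+1}=0\}$. Without irreducibility of $P_n$, which is not available, this says nothing about a common factor whose zero locus avoids these points. It also says nothing about the $X$-roots of $P_n(X,Y_0,Z_0)$ for fixed $(Y_0,Z_0)$.
\end{itemize}

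\textbf{The missing ingredient} is the differential relation the paper derives from \cref{lem:eta-G},
\[
D\bigl(G_{\{2\}^{n}}\bigr)=\frac{1}{4\pi^2}\Bigl((n+1)(2n+3)\,G_{\{2\}^{n+1}}-3G_2\,G_{\{2\}^{n}}\Bigr).
\]
Equivalently, $4\pi^2\,\partial P_n=(n+1)(2n+3)P_{n+1}-3XP_n$, with $\partial$ the derivation of $\CC[X,Y,Z]$ corresponding to $D$. This proves your claim. Suppose $H$ divides both $P_n$ and $P_{n+1}$; then $e\ceq v_H(P_{n+1})\ge2$ and $v_H(P_n)=e-1$.
\begin{itemize}
\item If $H\nmid\partial H$, then $v_H(\partial P_n)=e-2$, whereas the right-hand side has $H$-adic valuation at least $e-1$. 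Contradiction.
\item If $H\mid\partial H$, then $\partial H=cXH$ for weight reasons. So $D\,h=cG_2\,h$ for $h=H(G_2,G_4,G_6)$, which forces $h$ to be a constant times a power of $\Delta=\eta^{24}$. Hence $H\in\CC[Y,Z]$, which is impossible.
\end{itemize}

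Completed this way, your argument is a valid variant of the paper's. Squarefreeness is proved algebraically, and the transcendence input enters once, through Chudnovsky's theorem.

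The paper instead runs the same valuation count at the zero $a$ itself. It uses \cref{lem:GO-poly} to get an irreducible $R$ with a simple zero at $a$ and $v_R(P_n)=e$, and passes to $P_{n-1}$ via $\partial_X$. It then compares the orders of $G_{\{2\}^{n-1}}$, $D(G_{\{2\}^{n-1}})$ and $G_{\{2\}^n}$ at $a$. This needs neither your coprimality statement nor the case $H\mid\partial H$.
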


\begin{thm}\label{thm:equiv-zero}
Let $f$ be a nonzero quasimodular form of weight $k$ and depth $k/2$. Suppose that $f$ has a simple zero. Then $f$ has infinitely many $SL_2(\ZZ)$-inequivalent zeros in the half-strip. In particular, the same holds for $G_{\{2\}^n}$ for every integer $n>0$.
\end{thm}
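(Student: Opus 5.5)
We prove \cref{thm:equiv-zero} by exploiting the transformation law of a quasimodular form of maximal depth. Write $f=\sum_{j=0}^{p} f_j G_2^j$ with $p=k/2$ and $f_j$ modular of weight $k-2j$. Since $f_p$ has weight $0$, it is a nonzero constant $c$. From $G_2(\gamma\tau)=(c_\gamma\tau+d_\gamma)^2G_2(\tau)-\pi i c_\gamma(c_\gamma\tau+d_\gamma)$, for $\gamma=\begin{pmatrix}a&b\\ \gamma_c&d\end{pmatrix}\in SL_2(\ZZ)$ we get
\[
(\gamma_c\tau+d)^{-k}f(\gamma\tau)=\sum_{j=0}^{p} f_j^{\ast}(\tau)\left(\frac{\gamma_c}{\gamma_c\tau+d}\right)^{j},
\]
where $f_j^{\ast}=\frac{(-\pi i)^j}{j!}\frac{d^j f}{dG_2^j}$. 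In particular $f_p^{\ast}=(-\pi i)^p c\neq0$. Fix a simple zero $\tau_0$ of $f$ and set $X=\gamma_c/(\gamma_c\tau_0+d)$. Then $\gamma\tau_0'$ is a zero of $f$ exactly when $\tau_0'$ is a zero of the polynomial expression $P_\tau(X)=\sum_j f_j^\ast(\tau)X^j$ in which the $X$ depends on $\tau$ and $\gamma$.

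The plan is to produce zeros of $f$ near $\gamma\tau_0$ for infinitely many $\gamma$, using Rouché's theorem or the implicit function theorem, and following the idea of \cite{BS10} for $G_2$. Consider
\[
F(\tau,X)=\sum_j f_j^{\ast}(\tau)X^j.
\]
We have $F(\tau_0,0)=f(\tau_0)=0$ and $\partial_\tau F(\tau_0,0)=f'(\tau_0)\neq0$ by simplicity. By the implicit function theorem there is a holomorphic $\tau(X)$ near $X=0$ with $F(\tau(X),X)=0$ and $\tau(0)=\tau_0$.

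Now take $\gamma$ with $\gamma_c\to\infty$ and $d/\gamma_c$ chosen so that $X=1/(\tau+d/\gamma_c)$ is small. We want to solve $\tau=\tau(1/(\tau+d/\gamma_c))$. For large $|d/\gamma_c|$ this fixed-point equation has a solution $\tau_\gamma$ near $\tau_0$ by a contraction argument. The point $\gamma\tau_\gamma$ is then a zero of $f$, since $(\gamma_c\tau+d)^{-k}f(\gamma\tau)=F(\tau,X)$. Only the ratio $d/\gamma_c=:s$ matters, and we take $s$ ranging over rationals with large absolute value. Different $s$ give different $\tau_\gamma$, which are near $\tau_0$ and distinct because $\tau(X)$ is non-constant (or, if it is constant, $\tau_0$ is a common zero). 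This yields infinitely many zeros $\gamma\tau_\gamma$, which we translate into the half-strip using periodicity.

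It remains to check inequivalence. The zeros $w_s=\gamma_s\tau_s$ are $SL_2(\ZZ)$-equivalent to the points $\tau_s$. These lie in a small neighbourhood of $\tau_0$ and are pairwise distinct, provided $s\mapsto\tau_s$ is injective for large $|s|$. Finitely many points of a small disc can be $SL_2(\ZZ)$-equivalent to one another only if the disc meets elliptic fixed points or orbits overlapping nearby, and a small neighbourhood of a non-elliptic point injects into $SL_2(\ZZ)\backslash\HH$. Hence infinitely many of the $w_s$ are inequivalent.

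The main obstacle is ensuring injectivity of $s\mapsto\tau_s$, i.e.\ that $\tau(X)$ is non-constant. If $\tau(X)\equiv\tau_0$, then $f_j^\ast(\tau_0)=0$ for all $j$, contradicting $f_p^\ast\neq0$. So $\tau'(X)$ is nonzero to some finite order. I would first try to show directly that $\tau_s=\tau(1/(\tau_0+s))+o(1/s)$ is injective for large $|s|$, which follows from the non-constancy of $\tau(X)$ near $0$.

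The final claim for $G_{\{2\}^n}$ follows from \cref{thm:simple-zero} (or from \cref{thm:num-zero}, which already supplies simple zeros), since $G_{\{2\}^n}$ has weight $2n$ and depth $n$ by \cref{eq:der-G}.
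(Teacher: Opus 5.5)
Your proposal is correct and follows essentially the paper's route. You use the same transformation law, which turns $f(\gamma\tau)=0$ into a polynomial equation in $c/(c\tau+d)$ with nonzero top coefficient $f_{k/2}$; you apply the implicit function theorem at the simple zero and derive non-constancy of the resulting family from that top coefficient; and rational parameters then give zeros equivalent to infinitely many distinct points of a compact set. The only difference is cosmetic: you expand around $X=0$ (i.e.\ $d/c\to\infty$) rather than around a finite $d_0/c_0$ with base point $g_0^{-1}\tau_0$.

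Injectivity of $s\mapsto\tau_s$ is more than non-constancy of $\tau(X)$ gives you, and more than you need. It suffices that the holomorphic map $w\mapsto\tau_{1/w}$ is non-constant near $w=0$, so finite-to-one, together with the fact that each $SL_2(\ZZ)$-orbit meets a compact subset of $\HH$ in finitely many points. This also covers the case where $\tau_0$ is elliptic.
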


The following conjecture follows immediately from the transformation law of $G_2$ when $n=1$ (See \cite[Proposition 3.3]{BS10}). However, for general $n$, it remains unproved even in the case $n=2$. 

\begin{conj}
For each integer $n>0$, all zeros of $G_{\{2\}^n}$ in the half-strip are pairwise $SL_2(\ZZ)$-inequivalent in the half-strip.
\end{conj}

As a consequence of \cite[Theorem~1]{GMR11}, every zero of $G_k$, for even $k \geq 4$, is either a CM point or a transcendental number. On the other hand, \cite{Kum21} conjectured that all zeros of $G_2$ are transcendental. As a simple application of either \cite{Chu76} or \cite{Nes96}, we obtain the following result. However, it remains unknown whether $G_{\{2\}^n}$ has any algebraic zeros other than CM points; see the remark in \cref{subsec:trans}.

\begin{thm}\label{thm:trans-zero}
Let $f\in\overline{\QQ}[G_2,G_4,G_6]$ be a nonzero quasimodular form of weight $k$ and depth $k/2$. Then none of the zeros of $f$ in $\HH$ is a CM point. In particular, for every $n>0$, none of the zeros of $G_{\{2\}^n}$ is a CM point.
\end{thm}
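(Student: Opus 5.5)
Write $f=P(G_2,G_4,G_6)$ with $P\in\overline{\QQ}[X,Y,Z]$ weighted homogeneous of weight $k$ and $\deg_X P=k/2$. Since $X$ has weight $2$, the monomial $X^{k/2}$ already has weight $k$, so no $Y$ or $Z$ can multiply it. Hence
$$P=cX^{k/2}+\sum_{j<k/2}X^j Q_j(Y,Z)$$
with $c\in\overline{\QQ}^\times$.

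The plan is to evaluate at a CM point $\tau_0$ and use the classical structure of Eisenstein values there. By Chowla--Selberg (or Masser's results, via \cite{Chu76}/\cite{Nes96}), there is a transcendental period $\Omega=\Omega_K$ attached to the imaginary quadratic field $K$ of $\tau_0$ such that
$$G_4(\tau_0)=a_4\,\Omega^4,\quad G_6(\tau_0)=a_6\,\Omega^6,\quad G_2(\tau_0)=a_2\,\Omega^2+\frac{b}{\Img\tau_0},$$
with $a_2,a_4,a_6,b\in\overline{\QQ}$ and $b\neq 0$. The last formula comes from $G_2^*(\tau)=G_2(\tau)-\pi/(2\Img\tau)$, with the constant fixed by the normalization $G_2=\zeta(2)+\dots$; it is the algebraic multiple of $\Omega^2$ at CM points. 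Also $\Img\tau_0$ is algebraic, and $\pi/\Omega^2$ is transcendental: $\pi$ and $\Omega$ are algebraically independent by Chudnovsky \cite{Chu76} / Nesterenko \cite{Nes96}.

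Substituting gives
$$f(\tau_0)=\sum_{m} \alpha_m\, \pi^{m}\,\Omega^{k-2m}.$$
The coefficients $\alpha_m$ are algebraic by weighted homogeneity, since each $G_2$ contributes either $\Omega^2$ or $\pi$ and the total weight is $k$. The top term $m=k/2$ arises only from $cX^{k/2}$, choosing the $\pi$-part in every factor, so $\alpha_{k/2}=c\,(b/\Img\tau_0)^{k/2}\neq0$. Dividing by $\Omega^k$, we find $f(\tau_0)=\Omega^k\,R(\pi/\Omega^2)$, where $R$ is a nonzero polynomial with algebraic coefficients of degree exactly $k/2$. Because $\pi/\Omega^2$ is transcendental, $R(\pi/\Omega^2)\neq0$, and hence $f(\tau_0)\ne0$.

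The main obstacle is justifying the transcendence of $\pi/\Omega^2$ together with the precise form of the CM values above. For this I would cite Chudnovsky's theorem that $\pi$ and $\Omega$ are algebraically independent for CM lattices, equivalently Nesterenko applied to $q_0=e^{2\pi i\tau_0}$ (where $j(\tau_0)$ is algebraic). One degenerate case needs care: when $a_4$ or $a_6$ vanishes (at $\tau_0\sim i$ or $\rho$), we must choose $\Omega$ so that it is nonzero. This is fine, since $\Omega$ is always defined via the Chowla--Selberg period.

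For the corollary about $G_{\{2\}^n}$, note first that $G_{\{2\}^n}\in\QQ[G_2,G_4,G_6]$ by \cref{eq:def-G}. Iterating \cref{eq:der-G} shows that $G_{\{2\}^n}$ has depth exactly $n$ (its $n$-th $G_2$-derivative is $G_{\{2\}^0}=1$) and weight $2n$. So it satisfies the hypotheses, and the general statement applies.
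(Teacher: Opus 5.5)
Your argument is correct, but it takes a different route from the paper. There is one slip to fix first: $b$ is not algebraic. From $G_2^*=G_2-\pi/(2\Img\tau)$ and the CM-value result $G_2^*(\tau_0)\in\overline{\QQ}\,\Omega^2$, the correct formula is $G_2(\tau_0)=a_2\Omega^2+\pi/(2\Img\tau_0)$. As a check, at $\tau_0=i$ one has $G_2(i)=\pi/2$ and $a_2=0$. This corrected formula is what your sentence ``each $G_2$ contributes either $\Omega^2$ or $\pi$'' actually uses. With it, the leading coefficient is $\alpha_{k/2}=c\,(2\Img\tau_0)^{-k/2}\neq 0$, and the rest of your argument goes through unchanged.

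The paper never describes the CM values explicitly. It uses only that $j(\tau_0)\in\overline{\QQ}$, which via $j=1728E_4^3/(E_4^3-E_6^2)$ gives $\mathrm{trdeg}\,\QQ(E_4(\tau_0),E_6(\tau_0))\le 1$. Because the coefficient of $X^{k/2}$ is a nonzero algebraic number, $P(X,E_4(\tau_0),E_6(\tau_0))$ is a nonzero polynomial with $E_2(\tau_0)$ as a root. Hence $\mathrm{trdeg}\,\QQ(E_2(\tau_0),E_4(\tau_0),E_6(\tau_0))\le 1$, contradicting the Chudnovsky--Nesterenko bound $\ge 2$, which the paper uses as a black box.

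Your route needs two finer inputs: the shape of $G_2$ at CM points (Masser/Ramanujan) and the algebraic independence of $\pi$ and $\Omega$ (Chudnovsky). Taken together, these are essentially what the transcendence-degree bound says at a CM point, so the paper's version is more economical. In exchange, you obtain the explicit identity $f(\tau_0)=\Omega^k R(\pi/\Omega^2)$, which makes the role of maximal depth transparent: the pure power $\pi^{k/2}$ can arise only from the monomial $X^{k/2}$.
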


Furthermore, by the Gelfond--Schneider theorem \cite{Gel34,Sch35}, $\log a/\pi$ is transcendental for every algebraic real number $a>0$ with $a\neq1$. In particular, the limiting values for the zeros appearing in \cref{thm:lim-zero},
$$\frac{1}{\pi\kappa r}\log\left(\frac{2\kappa r+1}{2\kappa r-1}\right),
\qquad \kappa\in\{1,2\},$$
are transcendental (see \cref{cor:trans-lim-cor}).

\begin{figure}[htbp]
  \centering
  \begin{minipage}{0.45\textwidth}
    \centering
    \scalebox{1}[0.88]{%
      \includegraphics[width=\textwidth]{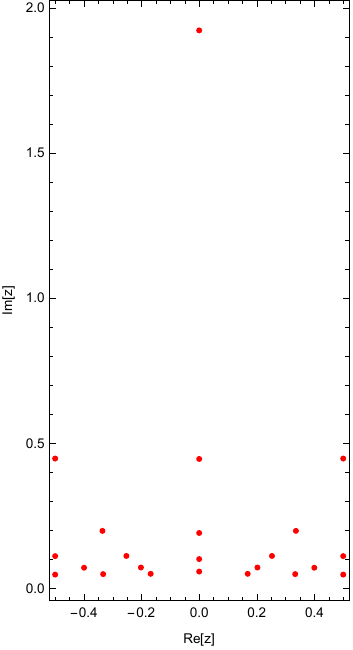}%
    }
  \end{minipage}
  \hfill
  \begin{minipage}{0.45\textwidth}
    \centering
    \scalebox{1}[0.88]{%
      \includegraphics[width=\textwidth]{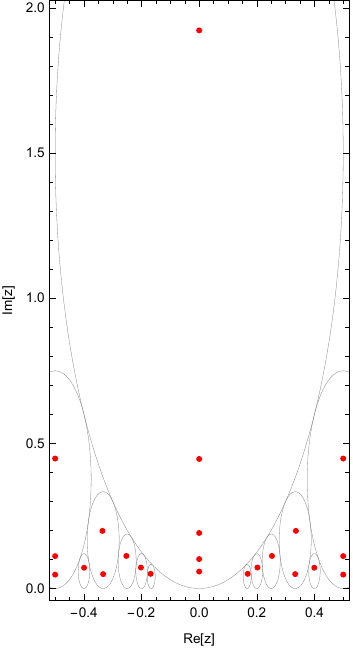}%
    }
  \end{minipage}
  \caption{Zeros of $G_{\{2\}^5}$.}
  \label{fig:zeros-5}
\end{figure}

Finally, we discuss the location of the zeros of $G_{\{2\}^n}$ within Ford circles. Recall that, for a reduced fraction $\frac{a}{c}\in\QQ$ with $c>0$, the Ford circle associated with $\frac{a}{c}$ is the circle tangent to the real axis at $\frac{a}{c}$ with radius $\frac{1}{2c^2}$.
We expect that results analogous to those of \cite{IJT14} and \cite{WY14} should also hold for $G_{\{2\}^n}$. However, in contrast to the cases considered in these works, the usual Ford circles do not appear to be large enough to contain all the zeros of $G_{\{2\}^n}$. Indeed, numerical experiments suggest that, for $n\geq 3$, $G_{\{2\}^n}$ has a zero on the positive imaginary axis whose imaginary part is greater than $1$, and hence this zero lies outside the usual Ford circle.
More precisely, we conjecture that each suitably vertically stretched Ford circle contains exactly $n$ zeros of $G_{\{2\}^n}$, and that $G_{\{2\}^n}$ has no other zeros. We further expect that the locations of these zeros within the stretched Ford circles can be estimated with substantially greater precision. At present, we have established only the following result in this direction, in the case of $G_{2,2}$.
\Cref{fig:zeros-5} illustrates the zeros of $G_{\{2\}^5}$, corresponding to the case $n=5$. In the right-hand panel, the same zeros are shown together with the Ford circles stretched vertically by a factor of $(n+1)/2$.

\begin{thm}\label{thm:fordcircle}
Each Ford circle contains exactly two distinct (and simple) zeros of $G_{2,2}$.
\end{thm}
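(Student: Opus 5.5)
The plan is to pull each Ford circle back to a fixed half-plane via $SL_2(\ZZ)$ and to factor $G_{2,2}$ there into two pieces that each have exactly one zero. Fix a reduced fraction $a/c$ with $c>0$ and choose $b,d$ with $\gamma=\begin{pmatrix}a&b\\c&d\end{pmatrix}\in SL_2(\ZZ)$. Since $\Img(\gamma w)=\Img(w)/|cw+d|^2$, the map $w\mapsto\gamma w$ sends the half-plane $\HH_1\ceq\{\Img w>1\}$ bijectively onto the open Ford disk at $a/c$, with $w\to i\infty$ corresponding to the tangency point. So it suffices to count the zeros of $w\mapsto G_{2,2}(\gamma w)$ in $\HH_1$. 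We take the boundary circle to be zero-free; this will come out of the estimates below.

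Next we use the transformation laws in the normalization of \cref{eq:def-G}. From $\GG_2=2G_2$ one gets $G_2(\gamma w)=(cw+d)^2G_2(w)-\pi i c(cw+d)$, and $G_4(\gamma w)=(cw+d)^4G_4(w)$. Hence
$$G_{2,2}(\gamma w)=\frac{(cw+d)^4}{2}\left[\Bigl(G_2(w)-\frac{\pi i c}{cw+d}\Bigr)^2-G_4(w)\right].$$
On $\overline{\HH_1}$ we have $|q|\le e^{-2\pi}$, so $G_4$ stays near $\zeta(4)$ and never vanishes, and a holomorphic branch $\sqrt{G_4}$ exists there. Put $h_\pm\ceq G_2\mp\sqrt{G_4}$. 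Using $cw+d\neq0$, the zeros of $G_{2,2}(\gamma w)$ in $\HH_1$ are exactly the solutions of the two fixed-point equations
$$w=g_\pm(w)\ceq-\frac dc+\frac{\pi i}{h_\pm(w)}.$$
At $q=0$ the points $\pi i/h_\pm$ are $i/(\pi(1/6\mp1/\sqrt{90}))$, roughly $5.19i$ and $1.17i$, both inside $\HH_1$.

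The core step is to show that each $g_\pm$ maps $\overline{\HH_1}$ into a compact disk $D_\pm$ around $-d/c+\pi i/h_\pm(i\infty)$, with $D_\pm\subset\HH_1$, and that $|g_\pm'|<1$ on $D_\pm$. We will get this from explicit tail bounds on the $q$-expansions of $G_2$ and $G_4$, together with $|g_\pm'|=\pi|h_\pm'|/|h_\pm|^2$ and $|dq/dw|=2\pi|q|$. Granting this, the argument closes as follows.
\begin{itemize}
\item Any fixed point of $g_\pm$ in $\overline{\HH_1}$ lies in $D_\pm$. So there is none on $\Img w=1$, and the circle itself is zero-free.
\item By the contraction principle on $D_\pm$, each $g_\pm$ has exactly one fixed point $w_\pm$.
\item The zero is simple, because $\frac{d}{dw}(w-g_\pm(w))=1-g_\pm'\neq0$ and the factors $(cw+d)^4$ and $h_\mp-\pi i c/(cw+d)$ do not vanish at $w_\pm$.
\item The two zeros are distinct, since $w_+=w_-$ would force $h_+=h_-$, i.e. $G_4(w)=0$.
\end{itemize}
Mapping back by $\gamma$ then gives exactly two distinct simple zeros in each Ford disk, which is the statement.

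The main obstacle is the numerical margin for $h_+$ near $\Img w=1$. There the tail of $G_4$ is large: $(2\pi)^4|q|/6\approx0.49$, compared with $\zeta(4)\approx1.08$. Crude bounds give $\pi/|h_+|$ only in roughly $[1.08,1.3]$, which is barely above $1$, and the contraction constant may not be small enough. I would first try sharper bounds using the full expansions with $\Real q$ and $\Img q$ treated separately. If that fails, I would replace the contraction argument by the argument principle: apply Rouché's theorem to $w-g_\pm(w)$ versus $w-g_\pm(i\infty)$ on the boundary of a large truncated region. The key input there is that $\Img g_+(w)>1$ on the line $\Img w=1$, which is a purely one-variable estimate and can be checked rigorously with interval arithmetic in $\Real w$.
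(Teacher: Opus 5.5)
Your reduction matches the paper's. You both arrive at $w+\frac dc=\pi i/h(w)$ for the two branches (your $h_\pm=G_2\mp\sqrt{G_4}$ is the paper's $h_\mp$), and the distinctness argument via $G_4\neq0$ is the same. The core step, however, is genuinely different.

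The paper works globally with $\tau\mapsto\tau-\pi i/h_+(\tau)$. It proves two estimates:
\begin{itemize}
\item $V_+<0$ for $1\le\Img\tau\le1.04$ (\cref{imagegplus});
\item $\Real g_+'>0$ for $\Img\tau\ge1.04$ (\cref{lem1.04}).
\end{itemize}
Hence $\{V_+=0\}$ is a graph $y=\phi_+(x)$ along which $U_+$ increases. Periodicity and the symmetry $\tau\mapsto-\overline{\tau}$ then show that every real $\alpha$ has exactly one preimage with $\Img\tau>1$, lying in the same unit strip as $\alpha$ (\cref{thm:solution-g}). Simplicity is imported from \cref{thm:simple-zero}.

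You instead fix the cusp and solve a fixed-point problem for each factor. This gives simplicity directly. It also localizes the zeros explicitly: at a zero one has $\gamma w=\frac ac+\frac{i\,h(w)}{\pi c^2}$, so the zeros sit near $\frac ac+\frac{iC_\pm}{\pi c^2}$, roughly $\frac ac+0.855\frac{i}{c^2}$ and $\frac ac+0.192\frac{i}{c^2}$. That is much finer than what the paper extracts. The paper's route, in exchange, gives a global picture of $g_\pm$ on $\{\Img\tau\ge1\}$: the curve $A_\pm$ on which $g_\pm$ is real and increasing. The price is a two-regime estimate that includes a derivative bound.

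Be aware that your contraction step, as written, does not close with the paper's crude bounds for the factor $G_2+\sqrt{G_4}$. That is your $h_-$; your last paragraph calls it $h_+$.
\begin{itemize}
\item With $|h-C_+|\le M(1)\approx0.347$, a disk centred at $-\frac dc+\frac{\pi i}{C_+}$ (height $\approx1.170$) containing the image needs radius $\pi M(1)/\bigl(C_+(C_+-M(1))\bigr)\approx0.174>0.170$. So that disk leaves $\HH_1$.
\item The true image disk does lie in $\Img\ge\pi/(C_++M(1))\approx1.036$. But at that height the same crude bound for $\pi|h'|/|h|^2$ is about $1.01$, so it does not give a contraction.
\end{itemize}

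There are three ways to repair this.
\begin{itemize}
\item Sharpen the estimates. The linear terms $-24\zeta(2)q$ and $120\sqrt{\zeta(4)}\,q$ partly cancel.
\item Drop the derivative bound. Since $g$ maps $\HH_1$ into a compact subset of $\HH_1$, Schwarz--Pick makes it a strict contraction for the hyperbolic metric.
\item Use your fallback, which is cheaper still. On $\{|\Real w|<S,\ 1<\Img w<T\}$ the homotopy $w-\bigl((1-t)g(w)+t\,g(i\infty)\bigr)$ has negative imaginary part on the bottom edge as soon as $\Img g>1$ there. It does not vanish on the other edges because $g$ is bounded. The argument principle then gives exactly one zero, counted with multiplicity.
\end{itemize}
For the fallback, use this homotopy form of Rouch\'e. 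The inequality $|g(w)-g(i\infty)|<|w-g(i\infty)|$ is not what $\Img g>1$ gives, and the crude bounds do not establish it.

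The fallback needs only two inputs: $h\neq0$ on $\overline{\HH_1}$, and $\Real\bigl(\pi/h(x+i)\bigr)>1$. The paper's bounds already give both, with $\pi/(C_\pm+M(1))\approx1.036$ and $3.30$. So this route uses only the $y=1$ case of \cref{imagegplus}, needs no derivative estimate, and makes \cref{thm:simple-zero} unnecessary here.
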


The set $\{G_2,G_4,G_6\}$ is a standard set of generators for the ring of quasimodular forms. This choice is natural from the viewpoint of quasimodular forms as an extension of modular forms, since the ring of modular forms is generated by $G_4$ and $G_6$. On the other hand, from the viewpoint of the behavior of zeros observed above, one may regard $\{G_2,G_{2,2},G_{2,2,2}\}$ as a more natural set of generators for the ring of quasimodular forms.

\subsection{Organization of the paper}
In \cref{sec:bell-shaped}, we give a proof of \cref{thm:num-zero}. That is achieved by showing that $\eta(iy)^3$ is a bell-shaped function. In \cref{sec:asymptotic-zero}, we give proofs of \cref{thm:lim-zero,thm:lim-zero-gen}. This is obtained by explicitly examining the Fourier expansion of $\eta(\tau)^3$. In \cref{sec:global-zero}, we prove \cref{thm:equiv-zero,thm:simple-zero,thm:trans-zero}, and in \cref{sec:fordcircle}, we prove \cref{thm:fordcircle}. These results are established as applications of previous work.

\section*{Use of AI}

The authors used ChatGPT 5.6 Sol as a supplementary tool in developing some of the mathematical statements and proofs presented in this work. While several ideas, computations, and intermediate arguments were explored through discussions with the model, the mathematical results, their formulation, and the final proofs are due to the authors. All statements and proofs were independently checked and verified by the authors, who take full responsibility for the content of this paper.

\section*{Acknowledgments}
The authors would like to thank Professor Abdellah Sebbar for pointing out an issue in an earlier version of this paper. The first named author expresses his gratitude to his academic advisor Professor Masanobu Kaneko and Professor Hiroyuki Ochiai. The first named author was supported by JST SPRING, Japan Grant Number JPMJSP2136. The second named author was supported by Basic Science Research Program through the National Research Foundation of Korea (NRF) funded by the Ministry of Education (RS-2025-25415913).

\section{Bell-shaped functions and number of zeros}\label{sec:bell-shaped}

In this section, we prove \cref{thm:num-zero}. Before giving the proof, we first recall the notion of bell-shaped functions and some of their properties, following \cite{Kwa20,KS22}.

\subsection{Recall the bell-shaped functions}

We say that a function
\(f:\RR\to\RR\) changes sign exactly \(n\) times if \(n+1\)
is the maximal length \(m\) of a strictly increasing sequence $x_1<x_2<\cdots<x_m$ in \(\RR\) such that $f(x_j)f(x_{j+1})<0$ for \(j=1,\ldots,m-1\). If \(f\) is differentiable and
\(f'(x)\neq 0\) whenever \(f(x)=0\), then \(f\) changes sign exactly
\(n\) times if and only if it has exactly \(n\) zeros.

\begin{definition}
A smooth function \(f:\RR\to\RR\) is said to be
\emph{strictly bell-shaped} if, for every \(n=0,1,2,\ldots\), the
\(n\)-th derivative \(f^{(n)}\) converges to zero at \(\pm\infty\) and
\(f^{(n)}\) changes sign exactly \(n\) times. A Borel function \(f:\mathbb{R}\to\mathbb{R}\) (or a Borel measure $f$ on $\RR$) is said to be \emph{weakly bell-shaped} if, for every
\(t>0\), the function
\[
(f*\mathfrak{G}_t)(x) = \int_{-\infty}^{\infty}f(y)\mathfrak{G}_t(x-y)~dy
\]
is well-defined and strictly bell-shaped. Here, $\mathfrak{G}_t$ is the Gauss–Weierstrass kernel defined by
$$\mathfrak{G}_t(x) = \frac{1}{\sqrt{4\pi t}} e^{-x^2/(4t)}.$$
\end{definition}

\begin{thm}[Corollary 4.4 in \cite{Kwa20}]\label{thm:Kwa-base}
Every strictly bell-shaped function is weakly bell-shaped.
\end{thm}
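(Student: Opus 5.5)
The plan is to verify directly that for every $t>0$ the function $g_t\ceq f*\mathfrak{G}_t$ is strictly bell-shaped. We get the upper bound on sign changes from the variation-diminishing property of the Gaussian kernel, and the lower bound from a Rolle-type induction that uses only the decay at $\pm\infty$.

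\emph{Step 1 (well-definedness and derivatives).} Since $f$ is continuous and $f\to0$ at $\pm\infty$, $f$ is bounded, so $f*\mathfrak{G}_t$ is well defined and smooth. The same reasoning shows that every $f^{(n)}$ is bounded. Differentiating under the integral and moving the derivatives onto $f$ by integration by parts, we get
\[
g_t^{(n)}=f^{(n)}*\mathfrak{G}_t .
\]
The boundary terms vanish because $f^{(m)}$ is bounded and $\mathfrak{G}_t$ and its derivatives decay rapidly. Since $f^{(n)}$ is bounded and tends to $0$ at $\pm\infty$, dominated convergence applied to $\int f^{(n)}(x-y)\mathfrak{G}_t(y)\,dy$ gives $g_t^{(n)}(x)\to0$ as $x\to\pm\infty$.

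\emph{Step 2 (at most $n$ sign changes).} The Gauss--Weierstrass kernel is a P\'olya frequency function, that is, it is totally positive. By Schoenberg's variation-diminishing theorem, convolution with $\mathfrak{G}_t$ does not increase the number of sign changes of a bounded function. Applying this to $f^{(n)}$, which changes sign exactly $n$ times, shows that $g_t^{(n)}$ changes sign at most $n$ times. This is the step carrying the real content. I would quote Schoenberg's theorem rather than reprove it, checking only that its hypotheses hold for bounded continuous inputs.

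\emph{Step 3 (at least $n$ sign changes).} We argue by induction on $n$. For $n=0$ we only need $g_t\not\equiv0$. This holds because $f\not\equiv0$ (indeed $f'$ changes sign once, so $f$ is nonconstant) and convolution with a Gaussian is injective on bounded functions, since its Fourier multiplier $e^{-t\xi^2}$ never vanishes.

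Now suppose $g_t^{(n-1)}$ changes sign at least $n-1$ times, witnessed by points $x_1<\dots<x_n$ with alternating signs. Because $g_t^{(n-1)}\to0$ at both ends, $|g_t^{(n-1)}|$ attains a positive local extremum on each interval $(-\infty,x_1]$, $[x_1,x_2]$, $\dots$, $[x_n,\infty)$ wherever it is nonzero. More simply, $g_t^{(n-1)}$ increases in absolute value from $0$ up to $x_1$ and decreases back to $0$ after $x_n$.

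By the mean value theorem, $g_t^{(n)}$ then takes a value with the sign of $g_t^{(n-1)}(x_1)$ at some point left of $x_1$. It takes alternating signs on each $(x_j,x_{j+1})$, and a value with the sign of $-g_t^{(n-1)}(x_n)$ at some point right of $x_n$. This produces $n+1$ points with alternating signs, so $g_t^{(n)}$ has at least $n$ sign changes. Combined with Step 2, it has exactly $n$.

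The main obstacle is Step 2. Everything else is elementary bookkeeping, while the upper bound genuinely requires the total positivity of the heat kernel, and hence Schoenberg's theory, which is precisely what \cite{Kwa20} relies on.
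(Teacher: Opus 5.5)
The paper gives no proof of this statement. It imports it verbatim as Corollary~4.4 of \cite{Kwa20}, so there is no internal argument to compare against. Your proof is correct, self-contained, and follows the standard route. The only nontrivial input is that $\mathfrak{G}_t$ is a P\'olya frequency function. Schoenberg's variation-diminishing theorem, applied to the bounded continuous function $f^{(n)}$, then bounds the number of sign changes of $g_t^{(n)}=f^{(n)}*\mathfrak{G}_t$ by $n$. The matching lower bound is pure Rolle and uses only $g_t^{(k)}\to0$ at $\pm\infty$ and $g_t\not\equiv0$. Writing it out this way also makes explicit that the corollary does not depend on the representation theorem (\cref{thm:Kwa-main}) that the paper actually uses.

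There are two presentation points to fix. First, remove the claim that $g_t^{(n-1)}$ increases in absolute value from $0$ up to $x_1$ and decreases back to $0$ after $x_n$; no such monotonicity is available. What you actually need, and what the decay gives, is a point $a<x_1$ with $|g_t^{(n-1)}(a)|<|g_t^{(n-1)}(x_1)|$ and a point $b>x_n$ with $|g_t^{(n-1)}(b)|<|g_t^{(n-1)}(x_n)|$. The mean value theorem on $[a,x_1]$, on each $[x_j,x_{j+1}]$, and on $[x_n,b]$ then produces the $n+1$ alternating values.

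Second, the Fourier-injectivity argument for the base case is valid but needs a distributional justification when $f$ is merely bounded. Since $e^{t\xi^2}$ is not a multiplier on tempered distributions, you would have to test against $C_c^\infty$ functions. It is simpler to note that $f$ changes sign $0$ times, is continuous, and is not identically zero. Hence it is, say, $\ge0$ and positive on an open interval, so $g_t>0$ everywhere.
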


\begin{thm}[Theorem 1.1 in \cite{Kwa20}]\label{thm:Kwa-main}
Suppose that \(f\) is a locally integrable function which converges to
zero at \(\pm\infty\), and which is decreasing near \(\infty\) and
increasing near \(-\infty\). Suppose furthermore that for
\(\xi \in \mathbb{R}\setminus\{0\}\) the Fourier transform of \(f\)
satisfies
\begin{align}\label{eq:Fourier-exp-Kwa}
\mathcal{L}f(i\xi)
&\ceq \int_{-\infty}^{\infty} e^{-i\xi x} f(x)\,dx
= \exp\left(-a\xi^2 - ib\xi + c + \int_{-\infty}^{\infty}K(i\xi,s) \varphi(s)\,ds\right),
\end{align}
where
\begin{align*}
K(i\xi,s) \ceq \frac{1}{i\xi+s} - \left(\frac{1}{s} - \frac{i\xi}{s^2} \right)
\mathbf{1}_{\mathbb{R}\setminus(-1,1)}(s)
\end{align*}
(with the former integral understood as an improper integral); here
\(a\geq 0\), \(b,c\in\mathbb{R}\), and
\(\varphi:\mathbb{R}\to\mathbb{R}\) is a function with the following
properties:
\begin{enumerate}
\item[(a)]
For every \(k\in\mathbb{Z}\), the function
\(\varphi(s)-k\) changes its sign at most once, and for \(k=0\)
this change takes place at \(s=0\): we have
$$\varphi(s)\geq 0 \quad \text{for } s>0,
\qquad
\varphi(s)\leq 0 \quad \text{for } s<0.$$
\item[(b)]
We have
$$\left(\int_{-\infty}^{-1} + \int_{1}^{\infty}\right)
\frac{|\varphi(s)|}{|s|^3}\,ds<\infty.$$
\item[(c)]
We have
$$\int_{-1}^{1}\Real\mathcal{L}f(i\xi)\,d\xi <\infty
\qquad\text{and}\qquad
\lim_{\xi\to 0} \Img\mathcal{L}f(i\xi) =0.$$
\end{enumerate}
Then \(f\) is weakly bell-shaped. If in addition \(f\) is smooth, then \(f\) is strictly bell-shaped. Conversely, any parameters $a, b, c$ and $\vphi$ with the properties listed above (where in $(c)$ we assume that $\mathcal{L}f(i\xi)$ is defined by \cref{eq:Fourier-exp-Kwa}) correspond to some weakly bell-shaped function $f$ (possibly with an extra atom at $b$).
\end{thm}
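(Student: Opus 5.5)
The statement just above is Kwaśnicki's characterisation, Theorem~1.1 in \cite{Kwa20}, and the paper quotes it without proof. My plan is to factor $f$ as the convolution of a Pólya frequency function and an AM--CM function, and to reduce bell-shapedness to one hard analytic lemma together with total positivity.

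\textbf{Step 1: split $\varphi$.} Write $\varphi=\varphi_1+\varphi_2$, where $\varphi_1$ is integer-valued and $\varphi_2$ is small:
\[
\varphi_1(s)=\lfloor\varphi(s)\rfloor \text{ for } s>0,\qquad \varphi_1(s)=\lceil\varphi(s)\rceil \text{ for } s<0 .
\]
Condition (a) says that each $\varphi-k$ changes sign at most once. Hence $\varphi_1$ is a monotone step function with unit jumps at points $s_k$. The remainder $\varphi_2$ takes values in $[0,1)$ on $s>0$ and in $(-1,0]$ on $s<0$. Condition (b) lets us treat both pieces in the exponent of \eqref{eq:Fourier-exp-Kwa} separately.

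\textbf{Step 2: identify the two factors.}
\begin{itemize}
\item For the integer part, each unit jump of $\varphi_1$ at $s_k$ integrates $K(i\xi,\cdot)$ to a factor of the form $(1+i\xi/s_k)^{-1}e^{i\xi/s_k}$, up to the normalisation in $K$. So $\exp(-a\xi^2-ib\xi+\int K\varphi_1)$ is the reciprocal of a Laguerre--Pólya entire function evaluated at $i\xi$. By Schoenberg's theorem it is the Fourier transform of a Pólya frequency function $P$, or a point mass at $b$ if $a=0$ and there are no jumps.
\item For the fractional part, $\exp(\int K\varphi_2)$ with $0\le|\varphi_2|\le1$ is the exponential Stieltjes-type representation of the Fourier transform of an \emph{AM--CM function} $A$. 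This means $A$ is absolutely monotone on $(-\infty,0)$ and completely monotone on $(0,\infty)$. This is proved via Bernstein's theorem and the exponential representation of Pick functions, as in Kwaśnicki--Simon.
\end{itemize}
Condition (c) fixes the constant $c$ and excludes spurious polynomial or atomic contributions at $\xi=0$, so that $f=e^{c}\,P*A$. The decay and monotonicity hypotheses near $\pm\infty$ are used here to match $f$ with this inverse transform. The converse direction is the same construction read backwards.

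\textbf{Step 3: sign changes, and the main obstacle.} Since derivatives commute with convolution,
\[
(f*\mathfrak G_t)^{(n)}=e^{c}\,P*(A*\mathfrak G_t)^{(n)} .
\]
\begin{itemize}
\item A Pólya frequency kernel is totally positive, hence variation diminishing. So the right-hand side changes sign at most as often as $(A*\mathfrak G_t)^{(n)}$.
\item Conversely, every derivative tends to $0$ at $\pm\infty$. Rolle's theorem then forces at least $n$ sign changes of any $n$-th derivative of a nonzero such function.
\end{itemize}
Everything therefore reduces to the key lemma: $A*\mathfrak G_t$ is strictly bell-shaped for every AM--CM $A$. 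This is the main obstacle. I would attack it by writing $A$ as a superposition of exponentials $e^{-s|x|}$ on each half-line. Then $A*\mathfrak G_t$ extends holomorphically, and its derivatives can be expressed through integrals over the Stieltjes measure. I would count the zeros of the $n$-th derivative with an argument-principle or Cauchy-index estimate along a suitable contour. The upper bound $n$ is the delicate part, and I expect it to require Kwaśnicki's careful analysis of the associated holomorphic function.

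\textbf{Step 4: smooth $f$.} If $f$ is smooth, let $t\to0$. Then $(f*\mathfrak G_t)^{(n)}\to f^{(n)}$ locally uniformly. The upper bound of $n$ sign changes survives the limit, since a limit cannot create strict sign alternations. The lower bound again follows from Rolle's theorem and decay at $\pm\infty$, the latter inherited from the hypotheses. Hence $f$ is strictly bell-shaped.
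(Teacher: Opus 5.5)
The paper gives no proof of this statement; it imports it from \cite{Kwa20}. Your reduction in Steps~1--3 is logically sound. It is also consistent with how this class is described in \cite{Kwa20,KS22}, namely as convolutions of a P\'olya frequency function with an AM--CM function.

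The genuine gap is that your ``key lemma'' is not a leftover technicality: it \emph{is} the theorem. If $a=0$ and $|\varphi|\le 1$, then $\varphi_1\equiv 0$ and the P\'olya factor is a point mass. The statement to be proved is then exactly that $A*\mathfrak{G}_t$ is strictly bell-shaped for an AM--CM function $A$. What your reduction removes is only the classical part. For a P\'olya frequency function $P$ one has $(P*\mathfrak{G}_t)^{(n)}=P*\mathfrak{G}_t^{(n)}$, and $\mathfrak{G}_t^{(n)}$ is a Hermite function with exactly $n$ sign changes, so variation diminishing gives the bound at once. No such mechanism exists for $A$:
\begin{itemize}
\item $A$ is log-convex on each half-line (e.g.\ $A(x)=|x|^{-1/2}$), so it is not a P\'olya frequency function, since those are log-concave.
\item Positive mixtures of bell-shaped functions are not bell-shaped in general (take two distant Gaussians). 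So writing $A*\mathfrak{G}_t$ as a superposition of the bell-shaped functions $(e^{-sx}\mathbf{1}_{(0,\infty)})*\mathfrak{G}_t$ proves nothing by itself.
\end{itemize}
Your proposed attack, ``an argument-principle or Cauchy-index estimate along a suitable contour'', does not say which holomorphic function's zeros are counted or along which contour. Nor does it say where the only available input caps the count at $n$. That input is complete monotonicity on each side, i.e.\ $0\le\varphi_2\le1$ on $(0,\infty)$ and $-1\le\varphi_2\le0$ on $(-\infty,0)$. So the upper bound on sign changes, which is the whole non-classical content of the result, is left unproved.

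Two smaller points also need repair.

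\textbf{Step~1.} $\lfloor\varphi\rfloor$ need not be monotone. For example, $\varphi=\mathbf{1}_{[1,2)}+\frac12\mathbf{1}_{[2,\infty)}$ satisfies (a) and (b). Yet $\lfloor\varphi\rfloor=\mathbf{1}_{[1,2)}$ produces the factor $\frac{2+i\xi}{2(1+i\xi)}e^{i\xi/2}$, which is not the transform of a P\'olya frequency function. Instead, set $s_k=\inf\{s:\varphi(s)>k\}$ and define $\varphi_1(s)=\#\{k\ge1: s>s_k\}$ for $s>0$, and symmetrically for $s<0$. Then $\varphi_2=\varphi-\varphi_1$ takes values in the closed interval $[0,1]$ on $(0,\infty)$ and in $[-1,0]$ on $(-\infty,0)$. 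Note also that $\exp(\int K\varphi_2)$ is in general the transform of an AM--CM function plus a nonnegative multiple of $\delta_0$. This is harmless unless the P\'olya factor is degenerate, and it is where the atom in the converse comes from.

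\textbf{Step~4.} Decay of $f^{(n)}$ at $\pm\infty$ is not among the hypotheses. Derive it from your limiting upper bound instead, by induction on $n$. Since $f^{(n+1)}$ has finitely many sign changes, $f^{(n)}$ is eventually monotone. Together with $f^{(n-1)}\to0$, this forces $f^{(n)}\to0$.
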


\begin{thm}[Corollary 1.6 in \cite{KS22}]\label{thm:Kwa-bound}
If $f$ is a strictly bell-shaped function, then there are constants $p$ and $q$ such that for every $n = 1,2,\dots$, all points at which $f^{(n)}$ changes sign are contained in $[pn,qn]$.
\end{thm}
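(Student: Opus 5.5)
The plan is to argue through the structure theory of bell-shaped functions developed in \cite{Kwa20,KS22}, rather than directly from the definition. The first step is to invoke the characterization behind \cref{thm:Kwa-main}: a strictly bell-shaped $f$ has Fourier transform of the form \cref{eq:Fourier-exp-Kwa}. The factor $\exp(-a\xi^2-ib\xi+\cdots)$ together with the part of $\varphi$ taking integer jumps corresponds to a P\'olya frequency function $\Lambda$. The remaining part corresponds to an AM--CM function $g$, meaning completely monotone on $(0,\infty)$ and absolutely monotone on $(-\infty,0)$. This gives a decomposition $f=\Lambda*g$. Accordingly I would write
\[
g(x)=\int_{(0,\infty)} e^{-sx}\,\mu_+(ds)\ (x>0),\qquad g(x)=\int_{(0,\infty)} e^{sx}\,\mu_-(ds)\ (x<0),
\]
and note that $f^{(n)}=\Lambda*g^{(n)}$ in the distributional sense.

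The second step is to locate the sign changes of $f^{(n)}$ by splitting $\RR$ into a far-right region, a far-left region, and a middle window.
\begin{itemize}
\item On the far right, $(-1)^n f^{(n)}(x)$ is, up to the kernel $\Lambda$, a positive mixture of $s^n e^{-sx}$. For $x$ large this is governed by the behaviour of $\mu_+$ near $s=0$ and by the exponential tail of $\Lambda$.
\item The function $s\mapsto s^n e^{-sx}$ peaks at $s=n/x$. So once $x\geq qn$ with $q$ large, only $s\lesssim 1/q$ contributes, and all contributions have one sign. This should show that $f^{(n)}$ has constant sign on $[qn,\infty)$.
\item The symmetric argument, using $\mu_-$ and the left tail of $\Lambda$, should give constant sign on $(-\infty,pn]$.
\end{itemize}
Since $f^{(n)}$ changes sign exactly $n$ times, it would then follow that all $n$ sign changes lie in $[pn,qn]$. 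A rescaling check, replacing $f(x)$ by $f(\lambda x)$, is consistent with linear growth in $n$, so no better uniform rate should be expected.

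The main obstacle is the convolution with $\Lambda$. Because $\Lambda$ is not compactly supported, constant sign of $g^{(n)}$ on a half-line does not immediately transfer to $f^{(n)}$ on a half-line. It is also unclear how the Gaussian part $a>0$ interacts with $g^{(n)}$, which oscillates near $0$ with amplitude growing like $n!$ times a power of the mass near large $s$.
\begin{itemize}
\item My first attempt would be to compare magnitudes directly. On $x\geq qn$, the ``good'' contribution of the tail region of $g^{(n)}$ is of order $\int s^n e^{-sx}\,\mu_+(ds)$. The ``bad'' contribution from the oscillating region near the origin is damped by $\Lambda(x-y)$, which decays at least exponentially. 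I would then choose $q$ so that the good term dominates for all $n$.
\item If that comparison fails, the fallback is the variation-diminishing property of P\'olya frequency kernels. Convolution with $\Lambda$ does not increase the number of sign changes, and it preserves their order. Combined with an exponential-moment bound on $\Lambda$, this should show that the extreme sign changes move by at most $O(n)$ relative to those of $g^{(n)}$. The linear bound for $g^{(n)}$ itself would come from the explicit Laplace-integral analysis above.
\end{itemize}
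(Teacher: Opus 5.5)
The paper gives no proof of this statement: it is imported as Corollary~1.6 of \cite{KS22}. Judged as a self-contained argument, your proposal has a genuine gap.

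A smaller point first. The fact that \emph{every} strictly bell-shaped $f$ admits the representation \cref{eq:Fourier-exp-Kwa}, hence $f=\Lambda*g$, is not contained in \cref{thm:Kwa-main} as quoted, which only gives sufficiency; it is the main theorem of \cite{KS22}.

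The real gap is the decisive step: constant sign of $f^{(n)}$ on $[qn,\infty)$ with $q$ independent of $n$. You only assert it, and the comparison you propose for it goes the wrong way. Since $g^{(n)}$ has sign $(-1)^n$ on $(0,\infty)$ and is $\ge0$ on $(-\infty,0)$, it has no sign change away from the origin. So your Laplace analysis of $g^{(n)}$ carries no information. The $n$ sign changes of $f^{(n)}$ are produced by the convolution with $\Lambda$, interacting with the singular part of $g^{(n)}$ at $0$, and that is exactly the part you leave open.

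Moreover, on the far right it is typically the tail of $\Lambda$, not $g$, that wins. Take $\Lambda=\mathfrak{G}_t*(ce^{-cx}\mathbf{1}_{x>0})$ with $0<c<1$ and $g(x)=e^{-|x|}$. Shifting the inversion contour for $s^ne^{ts^2}\frac{c}{c+s}\frac{2}{1-s^2}$ across the poles $-c$ and $-1$ gives
\[
(-1)^nf^{(n)}(x)=A\,c^ne^{-cx}-B\,e^{-x}+R_n(x),\qquad A=\frac{2ce^{tc^2}}{1-c^2}>0,\quad B=\frac{ce^{t}}{1-c}>0,
\]
with $|R_n(x)|\le C(x/2t)^ne^{-x^2/(4t)}$, which is negligible for $x$ of order $n$. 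The two main terms have opposite signs, the $\Lambda$-term dominates for large $x$, and $f^{(n)}$ changes sign near $x\approx n\log(1/c)/(1-c)$. So no choice of $q$ makes the ``good'' $g$-term dominate. What one would have to show is that the bottom of the joint spectrum of $\Lambda$ and $g$ carries a fixed sign and dominates uniformly for $x\ge qn$. The same picture arises with the non-P\'olya AM--CM factor $g(x)=x^{-1/2}e^{-x}\mathbf{1}_{x>0}$.

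Your fallback does not close the gap. Variation diminishing bounds the number of sign changes, not their location. Here it cannot even give the right count: the regular part of $g^{(n)}$ has at most one sign change, while $f^{(n)}$ has $n$. Nothing in your sketch turns an exponential moment of $\Lambda$ into an $O(n)$ bound on where sign changes move.

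In the degenerate case $g=\delta_0$, where $f$ is itself a P\'olya frequency function, your reduction is empty. That is precisely the case of $F_0$ in this paper: $\vphi_0$ is an integer-valued nondecreasing step function, and $F_0$ is the density of $\sum_{m\ge0}E_m/(2m+1)^2$ with $E_m$ i.i.d.\ standard exponentials.

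For the two functions the paper actually uses, the bound is elementary. $(-1)^nF_*^{(n)}(y)$ is a positive multiple of $\sum_{m\ge0}\pm(2m+1)^{2n+1}e^{-y(2m+1)^2}$, with a plus sign at $m=0$. For $y\ge(2n+1)/4$, the ratio of the $m$-th to the $0$-th term is at most $e^{-(2n+1)(m(m+1)-\log(2m+1))}$, and these ratios sum to less than $1/10$ over $m\ge1$. So the $m=0$ term dominates, exactly the mechanism of \cref{lem:sgn-series}. Since $F_*\equiv0$ on $(-\infty,0]$, all sign changes lie in $(0,(2n+1)/4)$.
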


\subsection{Proof of \cref{thm:num-zero}}

First, we give the expression of $G_{\{2\}^n}$ using the $\eta$-function defined by
$$\eta(\tau) \ceq q^{1/24}\prod_{m=1}^{\infty}(1-q^m)
\ ,\quad q \ceq e^{2\pi i\tau}.$$
Let $D$ denote $q\frac{d}{dq} = \frac{1}{2\pi i}\frac{d}{d\tau}$.

\begin{lem}\label{lem:eta-G}
For each integer $n\geq 0$, we have
\begin{equation}
G_{\{2\}^n}(\tau)
= \frac{8^n\pi^{2n}}{(2n+1)!}\frac{D^n(\eta(\tau)^3)}{\eta(\tau)^3}.
\end{equation}
\end{lem}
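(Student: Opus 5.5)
We prove the identity by comparing generating functions. The plan is to show that both sides have the same generating function in a formal variable $X$. On the left, \cref{eq:def-G} with $k=2$ gives
$$\sum_{n\ge0}G_{\{2\}^n}X^n=\exp\Bigl(\sum_{m\ge1}(-1)^{m-1}\frac{G_{2m}}{m}X^m\Bigr).$$
On the right, the natural object is the exponential generating function of $D^n(\eta^3)$. Since $\eta(\tau)^3=\sum_{k\ge0}(-1)^k(2k+1)q^{(2k+1)^2/8}$ by Jacobi's identity, we have formally
$$\sum_{n\ge0}\frac{D^n(\eta^3)}{n!}Y^n=\sum_{k\ge0}(-1)^k(2k+1)q^{(2k+1)^2/8}e^{(2k+1)^2Y/8}.$$
The weights $\frac{8^n\pi^{2n}}{(2n+1)!}$ suggest substituting $Y$ so that the $n$-th term carries $\frac{(8\pi^2X)^n}{(2n+1)!}$ instead of $\frac{1}{n!}$. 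This is a Borel-type transform, so it is better to work with an odd variable. Put $X=-u^2/(4\pi^2)$ (up to sign normalization, to be fixed in the computation). Then we want
$$\sum_n\frac{8^n\pi^{2n}X^n}{(2n+1)!}\frac{D^n\eta^3}{\eta^3}=\frac{1}{\eta^3}\sum_k(-1)^k q^{(2k+1)^2/8}\frac{\sin((2k+1)u/\sqrt2\cdots)}{u\cdots},$$
because $\sum_n\frac{(-1)^n z^{2n}(2k+1)^{2n}}{(2n+1)!}(2k+1)=\frac{\sin((2k+1)z)}{z}$. So the right side is a theta-type series of the form $\frac{\theta_1(z,\tau)}{z\,\theta_1'(0,\tau)}$, where $\theta_1'(0)=2\pi\eta^3$.

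Next we use the product formula for the Jacobi theta function,
$$\theta_1(z,\tau)=2q^{1/8}\sin(\pi z)\prod_{m\ge1}(1-q^m)(1-q^me^{2\pi iz})(1-q^me^{-2\pi iz}).$$
This gives $\frac{\theta_1(z)}{\pi z\,\theta_1'(0)/\pi}=\frac{\sin\pi z}{\pi z}\prod_m\frac{(1-q^me^{2\pi iz})(1-q^me^{-2\pi iz})}{(1-q^m)^2}$. Taking logarithms, the factor $\log\frac{\sin\pi z}{\pi z}=-\sum_{m}\frac{\zeta(2m)}{m}z^{2m}$ accounts for the constant terms $\zeta(2m)$ of $G_{2m}$. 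Expanding the product, $\sum_{m,d}-\frac{q^{md}}{d}(e^{2\pi idz}+e^{-2\pi idz}-2)$, and then expanding the cosine in $z$ gives $-\sum_{j\ge1}\frac{2(2\pi i)^{2j}z^{2j}}{(2j)!}\sum_N\sigma_{2j-1}(N)q^N$. Comparing with $\frac{(2\pi i)^{2j}}{(2j-1)!}\sigma_{2j-1}$ in $G_{2j}$ yields the factor $\frac{2}{(2j)!}\cdot(2j-1)!=\frac1j$. Hence
$$\log\frac{\theta_1(z)}{z\theta_1'(0)}=-\sum_{j\ge1}\frac{G_{2j}}{j}z^{2j}.$$
This is precisely the exponent in \cref{eq:def-G} with $X=-z^2$.

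Finally we match the normalizations. Rewriting $\theta_1(z)=\sum_k(-1)^k q^{(2k+1)^2/8}\cdot2\sin((2k+1)\pi z)$ and expanding in $z$ gives coefficients $\frac{(-1)^n(\pi z)^{2n+1}(2k+1)^{2n+1}}{(2n+1)!}$. Since $(2k+1)^{2n}q^{(2k+1)^2/8}=8^nD^n q^{(2k+1)^2/8}$, the $z^{2n}$ coefficient of $\theta_1(z)/(z\theta_1'(0))$ equals $\frac{(-1)^n\pi^{2n}8^n}{(2n+1)!}\frac{D^n\eta^3}{\eta^3}$. With $X=-z^2$, this reproduces the claimed formula. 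The main obstacle is only bookkeeping: tracking the constants $\pi$ and $8^n$ and the signs consistently, and justifying the formal interchange of the $z$-expansion with the $q$-series, which is fine because for fixed $\tau$ everything converges absolutely for small $|z|$.
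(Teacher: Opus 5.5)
Your argument is correct, and it takes a different route from the paper. The paper computes nothing directly. It quotes Equation (3.3) of Amdeberhan--Ono--Singh, which writes $D^n(\eta^3)/\eta^3$ as a P\'olya cycle-index (partition) polynomial in $G_2,\dots,G_{2n}$. It then observes that the same polynomial is the coefficient of $X^n$ in the exponential \cref{eq:def-G}. You instead prove the underlying generating-function identity from scratch:
\[
\sum_{n\ge0}G_{\{2\}^n}(\tau)(-z^2)^n=\frac{\theta_1(z,\tau)}{z\,\theta_1'(0,\tau)}=\exp\Bigl(-\sum_{j\ge1}\frac{G_{2j}(\tau)}{j}z^{2j}\Bigr).
\]
You get the second equality from the product formula for $\theta_1$. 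The sine factor supplies the constant terms $\zeta(2j)$, and the $q$-product supplies the divisor sums with the factor $2/(2j)!=1/(j(2j-1)!)$. You then read off the $z^{2n}$ coefficient from the series $\theta_1=2\sum_k(-1)^kq^{(2k+1)^2/8}\sin((2k+1)\pi z)$. The constants and signs check out and give exactly $8^n\pi^{2n}/(2n+1)!$.

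Your version is self-contained: it needs only the Jacobi triple product in series and product form, which also yields $\theta_1'(0)=2\pi\eta^3$. It avoids the partition combinatorics and shows that the generating series of the $G_{\{2\}^n}$ is a normalized theta quotient. The paper's version is shorter given the citation and displays the explicit partition formula.

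Two small clean-ups. First, the heuristic paragraph with $X=-u^2/(4\pi^2)$ and $\sin((2k+1)u/\sqrt2\cdots)$ is off: with $u=2\pi z$ the argument should be $(2k+1)u/2$. Just state the substitution $X=-z^2$, which your final computation handles correctly. Second, for the analytic justification it suffices to take $|z|<\min(1,\Img\tau)$. This makes the logarithmic expansions of both the sine product and the $q$-product absolutely convergent, so the triple sum over $m,d,j$ may be rearranged before comparing Taylor coefficients.
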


\begin{proof}
The equation is essentially the same as Equation (3.3) in \cite{AOS24} (see also \cite{AOS25}):
$$\frac{2^n}{(2n+1)!}\frac{D^n(\eta(\tau)^3)}{\eta(\tau)^3}
= \frac{1}{(2\pi i)^{2n}}\sum_{(1^{m_1},\dots,n^{m_n})\vdash n}\prod_{j=1}^{n}\frac{1}{m_j!}\left(-\frac{1}{j}G_{2j}(\tau)\right)^{m_j}.$$
Here, $\vdash$ indicates that $(1^{m_1},\dots,n^{m_n})$ is a partition of $n$, and we have made a slight modification to the formula to conform to the notation used in this paper. By \cref{eq:def-G}, we have
\begin{align*}
\sum_{n=0}^{\infty} G_{\{2\}^n}(\tau)X^{n}
&= \exp\left(\sum_{m=1}^{\infty}(-1)^{m-1}\frac{G_{2m}(\tau)}{m}X^{m}\right)
\\
&= \sum_{k=0}^{\infty}\left(\sum_{(1^{m_1},\dots,k^{m_k})\vdash k}\prod_{j=1}^{k}\frac{1}{m_j!}\left(-\frac{1}{j}G_{2j}(\tau)\right)^{m_j}(-X)^k\right),
\end{align*}
where the second equality follows from the generating function for P\'{o}lya's cycle index polynomials; see, for example, Lemma 2.1 in \cite{AGOS25}. Thus, we have
\begin{align*}
G_{\{2\}^n}(\tau)
&= (-1)^n\sum_{(1^{m_1},\dots,n^{m_n})\vdash n}\prod_{j=1}^{n}\frac{1}{m_j!}\left(-\frac{1}{j}G_{2j}(\tau)\right)^{m_j}
= \frac{8^n\pi^{2n}}{(2n+1)!}\frac{D^n(\eta(\tau)^3)}{\eta(\tau)^3}.
\end{align*}
This completes the proof.
\end{proof}

By \cref{lem:eta-G} and the fact that the $\eta$-function has neither zeros nor poles on $\HH$, the zeros of $G_{\{2\}^n}$ coincide with those of $D^n(\eta^3)$. By Jacobi's triple product, we have
\begin{align}\label{eq:eta-triple}
\eta(\tau)^3 = \sum_{m=0}^{\infty}(-1)^m(2m+1)q^{(2m+1)^2/8}.
\end{align}
Define the functions $F_0,F_{1/2}:\RR\to\RR$ by
$$F_{0}(y) \ceq \left\{\begin{array}{lc}
\displaystyle{\frac{4}{\pi}\eta\left(\frac{4iy}{\pi}\right)^3
= \frac{4}{\pi}\sum_{m=0}^{\infty}(-1)^m(2m+1)e^{-y(2m+1)^2}}
& (y>0)
\\
0 & (y\leq 0)
\end{array}\right.$$
and
$$F_{1/2}(y) \ceq \left\{\begin{array}{lc}
\displaystyle{\frac{2\sqrt{2}}{\pi e^{\pi i/8}}\eta\left(\frac{1}{2}+\frac{4iy}{\pi}\right)^3
= \frac{2\sqrt{2}}{\pi}\sum_{m=0}^{\infty}(-1)^{[m/2]}(2m+1)e^{-y(2m+1)^2}}
& (y>0)
\\
0 & (y\leq 0)
\end{array}\right.$$
Since $\eta^3$ is a cusp form of weight $3/2$, its exponential decay at the cusps implies that, for every cusp $\alpha$,
$$\lim_{\tau\to \alpha}D^n(\eta^3)(\tau) = 0.$$
Hence, the functions $F_{0}$ and $F_{1/2}$ are smooth functions on $\RR$. In what follows, we show that $F_{0}(y)$ and $F_{1/2}(y)$ are strictly bell-shaped, which implies that $G_{\{2\}^n}$ has exactly $n$ zeros on the imaginary axis and on the vertical half-line $\Real(\tau)=1/2$, respectively.

\begin{lem}\label{lem:0-bell-shaped}
Define
$$\vphi_0(s) = \sum_{m=0}^{\infty}\mathbf{1}_{[(2m+1)^2,\infty)}(s).$$
Then, $F_0$ and $\vphi_0$ satisfy the conditions of \cref{thm:Kwa-main}, and we have
$$\mathcal{L}F_0(i\xi) = \exp\left( -i\frac{\pi^2}{8}\xi
+ \int_{-\infty}^{\infty}K(i\xi,s) \varphi_0(s)\,ds \right).$$
Consequently, $F_{0}$ is weakly bell-shaped. Since $F_{0}$ is smooth, it is strictly bell-shaped.
\end{lem}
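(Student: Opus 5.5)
The plan is to compute the Laplace transform of $F_0$ in closed form as $1/\cosh(\pi\sqrt z/2)$, expand this as a Weierstrass-type product, and recognise each factor as one step of $\vphi_0$ in the representation of \cref{thm:Kwa-main}.

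\textbf{Step 1: closed form for the transform.} For $\Real z\ge 0$ I will show
$$\int_0^\infty e^{-zy}F_0(y)\,dy=\frac{4}{\pi}\sum_{m=0}^\infty\frac{(-1)^m(2m+1)}{z+(2m+1)^2}=\frac{1}{\cosh(\pi\sqrt z/2)}.$$
The second equality is the classical partial fraction expansion of $\operatorname{sech}$. Termwise integration needs justification, since $\sum_m(2m+1)e^{-y(2m+1)^2}\asymp 1/(2y)$ is not integrable at $0$. I would proceed as follows.
\begin{itemize}
\item Integrate over $[\varepsilon,\infty)$ only, where the series converges absolutely. This gives $\frac4\pi\sum_m(-1)^m(2m+1)e^{-\varepsilon(z+(2m+1)^2)}/(z+(2m+1)^2)$.
\item Let $\varepsilon\to0$. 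On the left, $F_0$ is bounded near $0^+$: by $\eta(i/t)^3=t^{3/2}\eta(it)^3$ it decays like $y^{-3/2}e^{-\pi^2/(16y)}$. So dominated convergence applies.
\item On the right, the terms $(2m+1)/(z+(2m+1)^2)$ decrease eventually to $0$, so the alternating series converges uniformly in $\varepsilon\ge0$ by Abel's test. Hence the limit can be taken termwise.
\end{itemize}
This interchange is the step I expect to need the most care. Everything afterwards is bookkeeping.

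\textbf{Step 2: matching the exponent.} From $\cosh(\pi w/2)=\prod_{m\ge0}\bigl(1+w^2/(2m+1)^2\bigr)$ we get
$$\log\mathcal{L}F_0(z)=-\sum_m\log\bigl(1+z/a_m\bigr),\qquad a_m=(2m+1)^2\ge1.$$
For $s\ge1$ we have $K(z,s)=\frac1{z+s}-\frac1s+\frac z{s^2}$. A direct computation then gives
$$\int_{a_m}^\infty K(z,s)\,ds=-\log\bigl(1+z/a_m\bigr)+\frac z{a_m}.$$
Since $\vphi_0=\sum_m\mathbf 1_{[a_m,\infty)}$, summing over $m$ is legitimate: each term is $O(|z|^2/a_m^2)$. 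Using $\sum_m 1/(2m+1)^2=\pi^2/8$, this yields
$$\int K(z,s)\vphi_0(s)\,ds=\log\mathcal{L}F_0(z)+\frac{\pi^2}{8}z.$$
Setting $z=i\xi$ gives the stated formula with $a=c=0$ and $b=\pi^2/8$. The branch of $\log$ is the one that is continuous from $\xi=0$, where both sides vanish.

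\textbf{Step 3: the hypotheses of \cref{thm:Kwa-main}.}
\begin{itemize}
\item $F_0$ is locally integrable and tends to $0$ at $\pm\infty$. It is $0$ on $(-\infty,0]$, so it is (weakly) increasing near $-\infty$. It is decreasing near $+\infty$ because the term $e^{-y}$ dominates $F_0'$.
\item Condition (a): $\vphi_0$ is a nondecreasing integer-valued step function, equal to $0$ for $s<1$ and nonnegative for $s>0$. Hence $\vphi_0-k$ changes sign at most once for each $k$, and for $k=0$ at $s=0$ in the required sense.
\item Condition (b): $\vphi_0(s)\le(\sqrt s+1)/2$, so $\int_1^\infty \vphi_0(s)s^{-3}\,ds<\infty$, and $\vphi_0$ vanishes on $(-\infty,-1]$.
\item Condition (c): $\mathcal{L}F_0(i\xi)=1/\cosh(\pi\sqrt{i\xi}/2)$ is continuous and bounded on $\RR$ and equals $1$ at $\xi=0$. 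So $\int_{-1}^1\Real\mathcal{L}F_0\,d\xi<\infty$ and $\Img\mathcal{L}F_0(i\xi)\to0$.
\end{itemize}
\cref{thm:Kwa-main} then shows that $F_0$ is weakly bell-shaped. Since $F_0$ is smooth, as noted before the lemma, it is strictly bell-shaped.
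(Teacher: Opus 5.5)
Your proposal is correct and follows essentially the same route as the paper: a termwise Laplace transform on $[\varepsilon,\infty)$ with a uniform-in-$\varepsilon$ interchange of limit and sum, the partial fraction expansion giving $1/\cosh(\frac{\pi}{2}\sqrt{i\xi})$, and the Weierstrass product for $\cosh$ matched against $\int_{(2m+1)^2}^\infty K(i\xi,s)\,ds=\frac{i\xi}{(2m+1)^2}-\log\bigl(1+\frac{i\xi}{(2m+1)^2}\bigr)$, with $b=\pi^2/8$ and $a=c=0$. The only substantive difference is how the interchange is justified. The paper splits $\frac{2m+1}{i\xi+(2m+1)^2}=\frac{1}{2m+1}-\frac{i\xi}{(2m+1)(i\xi+(2m+1)^2)}$ and uses an alternating-series bound plus the $M$-test, which also avoids your loose claim that the complex terms $\frac{2m+1}{z+(2m+1)^2}$ ``decrease''; your Abel-test argument is fine once the monotone factor is taken to be $e^{-\varepsilon(2m+1)^2}$ and the convergent series to be $\sum_m(-1)^m\frac{2m+1}{z+(2m+1)^2}$.
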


\begin{proof}
We only verify that $F_0$ and $\vphi_0$ satisfy the representation \cref{eq:Fourier-exp-Kwa} and condition (c) in \cref{thm:Kwa-main}. The remaining conditions are straightforward to check. For $\xi\in\RR$, by Fubini's theorem, we have
\begin{align*}
\mathcal{L}F_0(i\xi)
= \int_{-\infty}^{\infty}e^{-i\xi y}F_0(y)\, dy
&= \frac{4}{\pi}\lim_{\ep\to +0}\sum_{m=0}^{\infty}(-1)^m(2m+1)\int_{\ep}^{\infty}e^{-(i\xi+(2m+1)^2)y}\, dy
\\
&= \frac{4}{\pi}\lim_{\ep\to +0}e^{-i\xi\ep}\sum_{m=0}^{\infty}(-1)^m\frac{(2m+1)}{i\xi+(2m+1)^2}e^{-(2m+1)^2\ep}.
\end{align*}
We now justify the interchange of the limit and the infinite sum in the last line.
Using the identity
\begin{align*}
\frac{2m+1}{i\xi+(2m+1)^2}
&= \frac{1}{2m+1} - \frac{i\xi}{(2m+1)(i\xi+(2m+1)^2)},
\end{align*}
it suffices to consider separately
\begin{align*}
\lim_{\ep\to +0}\sum_{m=0}^{\infty}(-1)^m\frac{1}{2m+1}e^{-(2m+1)^2\ep}
\end{align*}
and
\begin{align*}
\lim_{\ep\to +0}\sum_{m=0}^{\infty}
(-1)^m\frac{i\xi}{(2m+1)(i\xi+(2m+1)^2)}e^{-(2m+1)^2\ep}.
\end{align*}
For the second expression, the limit and the infinite sum may be interchanged by the Weierstrass $M$-test, since, for $\ep\geq 0$,
\begin{align*}
\left|\frac{i\xi}{(2m+1)(i\xi+(2m+1)^2)}e^{-(2m+1)^2\ep}\right|
\leq \frac{|\xi|}{(2m+1)^3},
\end{align*}
and
$$\sum_{m=0}^{\infty}\frac{|\xi|}{(2m+1)^3}<\infty.$$
For the first expression, define
$$b_m(\ep) \ceq \frac{1}{2m+1}e^{-(2m+1)^2\ep}.$$
For each $\ep\geq 0$, the sequence $\{b_m(\ep)\}_{m\geq 0}$ is monotonically decreasing in $m$. Hence, by the alternating series estimate,
$$\left|\sum_{m=M}^{\infty}(-1)^mb_m(\ep)\right| \leq b_M(\ep) \leq \frac{1}{2M+1}\to 0 \quad \text{as}\ M\to\infty.$$
Therefore, the first series converges uniformly in $\ep\geq 0$, and hence the limit and the infinite sum may be interchanged, which gives
\begin{align*}
\mathcal{L}F_0(i\xi)
&= \frac{4}{\pi}\sum_{m=0}^{\infty}(-1)^m\frac{(2m+1)}{i\xi+(2m+1)^2}.
\end{align*}
By the partial fraction expansion of $1/\cosh$, we obtain
\begin{align*}
\mathcal{L}F_0(i\xi)
&= \frac{4}{\pi}\sum_{m=0}^{\infty}(-1)^m\frac{(2m+1)}{i\xi+(2m+1)^2}
= \frac{1}{\cosh(\frac{\pi}{2}\sqrt{i\xi})}.
\end{align*}
On the other hand, we have
\begin{align*}
\int_{-\infty}^{\infty}K(i\xi,s)\vphi_0(s)ds
&= \sum_{m=0}^{\infty}\int_{(2m+1)^2}^{\infty}\left(\frac{1}{i\xi+s}-\frac{1}{s}+\frac{i\xi}{s^2}\right) ds
\\
&= \sum_{m=0}^{\infty}\left[\log\left(1+\frac{i\xi}{s}\right)-\frac{i\xi}{s}\right]_{(2m+1)^2}^{\infty}
\\
&= \sum_{m=0}^{\infty} \left(\frac{i\xi}{(2m+1)^2}-\log\left(1+\frac{i\xi}{(2m+1)^2}\right)\right)
\\
&= i\frac{\pi^2}{8}\xi - \sum_{m=0}^{\infty}\log\left(1+\frac{i\xi}{(2m+1)^2}\right).
\end{align*}
The first equality follows from Fubini's theorem. Indeed, the estimate $|K(i\xi,s)|\leq \xi^2/s^3$ shows that $K(i\xi,s)\vphi_0(s)$ is absolutely integrable. Thus, by the Weierstrass product for $\cosh$, we obtain
\begin{align*}
\exp\left(- i\frac{\pi^2}{8}\xi + \int_{-\infty}^{\infty}K(i\xi,s)\vphi_0(s)ds\right)
&= \prod_{m=0}^{\infty}\left(1+\frac{i\xi}{(2m+1)^2}\right)^{-1}
= \frac{1}{\cosh(\frac{\pi}{2}\sqrt{i\xi})} = \mathcal{L}F_0(i\xi).
\end{align*}
Therefore, we get the representation \cref{eq:Fourier-exp-Kwa}. Condition (c) in \cref{thm:Kwa-main} follows from $\cosh(\frac{\pi}{2}\sqrt{i\xi})\neq 0$ for all $\xi\in\RR$.
\end{proof}

\begin{lem}\label{lem:partial-cosh}
For each $s\in\CC$, we have
\begin{align*}
\frac{\cosh(s\pi)}{\cosh(2s\pi)}
&= \frac{2\sqrt{2}}{\pi}\sum_{m=0}^{\infty}(-1)^{[m/2]}\frac{(2m+1)}{(4s)^2+(2m+1)^2}.
\end{align*}
\end{lem}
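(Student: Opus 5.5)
We prove the identity as a partial fraction (Mittag-Leffler) expansion of the meromorphic function $h(s)=\cosh(s\pi)/\cosh(2s\pi)$. The plan has three steps: determine the poles and residues of $h$, recombine the principal parts into the stated sum, and show that the remainder is an entire function that vanishes identically.

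\textbf{Poles and residues.} The poles of $h$ are the zeros of $\cosh(2s\pi)$, namely $s=\pm i(2m+1)/4$ for $m\ge 0$. At each of them $\cosh(s\pi)\neq 0$: the value is $\cos((2m+1)\pi/4)=\pm 1/\sqrt2$. Hence all poles are simple. At $s_m=i(2m+1)/4$ the residue is
\[
\frac{\cosh(s_m\pi)}{2\pi\sinh(2s_m\pi)}=\frac{\cos((2m+1)\pi/4)}{2\pi i\,\sin((2m+1)\pi/2)}=\frac{\cos((2m+1)\pi/4)}{2\pi i\,(-1)^m}.
\]
A short case check on $m \bmod 4$ gives $\cos((2m+1)\pi/4)=(-1)^{[(m+1)/2]}/\sqrt2$. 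Combining with $(-1)^m$ yields the sign $(-1)^{[m/2]}$, so the residue at $s_m$ equals $(-1)^{[m/2]}/(2\sqrt2\,\pi i)$. Since $h$ is even, the residue at $-s_m$ is the negative of this.

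\textbf{Recombining the principal parts.} Pairing the poles at $\pm s_m$ gives
\[
\frac{c_m}{s-s_m}-\frac{c_m}{s+s_m}=\frac{2c_m s_m}{s^2-s_m^2}=\frac{2c_m\, i(2m+1)/4}{s^2+(2m+1)^2/16},
\]
where $c_m$ denotes the residue at $s_m$. Substituting $c_m$, this equals
\[
\frac{2\sqrt2}{\pi}(-1)^{[m/2]}\frac{2m+1}{(4s)^2+(2m+1)^2},
\]
which matches the constant and the summand in the statement. The paired series converges absolutely, since its terms are $O(1/m)$ with signs in the pattern $++--$. To be safe, one can instead group four consecutive values of $m$, or note that the terms are $O(m^{-1})$ while partial sums over the blocks are $O(m^{-2})$. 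Either way the sum converges locally uniformly away from the poles.

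\textbf{Vanishing of the remainder (main obstacle).} Let $R(s)$ be $h(s)$ minus the series. Then $R$ is entire, and we must show $R\equiv 0$. The standard route is a contour integral of $h(w)/(w-s)$ over the rectangles (or squares) with horizontal sides at $\Img w=\pm N/2$, which pass midway between consecutive poles, and vertical sides at $\Real w=\pm N$. On the vertical sides $|h(w)|\le Ce^{-\pi|\Real w|}$. On the horizontal sides $|\cosh(2w\pi)|$ is bounded below, because the sides avoid the poles by a fixed distance and the function is periodic in $\Img w$. Hence $|h|\le Ce^{-\pi|\Real w|}$ there too, so $h$ is bounded on the contours and decays along the horizontal directions.

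This only bounds $\oint h(w)/(w-s)\,dw$, not forces it to zero, so we subtract the $s=0$ instance and use $\oint h(w)\bigl(\tfrac{1}{w-s}-\tfrac1w\bigr)dw\to 0$. This gives $h(s)-h(0)=\sum(\text{principal parts at } s \text{ minus at } 0)$. Finally we check $h(0)=1$ against the series at $s=0$, which reads $\frac{2\sqrt2}{\pi}\sum(-1)^{[m/2]}/(2m+1)=\frac{2\sqrt2}{\pi}\cdot\frac{\pi}{2\sqrt2}=1$, the classical sum $1+\tfrac13-\tfrac15-\tfrac17+\cdots=\pi/(2\sqrt2)$.

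Alternatively, and more cleanly, symmetry does the work. Evenness of $h$ makes the sum of the two pole contributions at $\pm s_m$ match the paired form. The constant is then fixed by $h(s)\to 0$ as $s\to\pm\infty$ along the real axis, where the series also tends to $0$ by dominated convergence after grouping. Since the difference is a bounded entire function by Liouville's theorem, it is a constant, and that limit shows the constant is $0$.
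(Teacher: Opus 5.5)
Your main argument is correct, but it takes a genuinely different route from the paper's. By main argument I mean the residues at $\pm i(2m+1)/4$, the pairing of $\pm s_m$, and the contour integrals over rectangles with horizontal sides at $\Img w=\pm N/2$.

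The paper does no contour analysis of its own. It writes $\frac{\cosh s}{\cosh 2s}=\frac{1}{2\sqrt2}\bigl(\frac{1}{\cosh(s+\pi i/4)}+\frac{1}{\cosh(s-\pi i/4)}\bigr)$ and substitutes the classical partial fraction expansion of $1/\cosh$ into both terms. It then recombines them with the rational identity
\[
a\Bigl(\frac{1}{(x+i/2)^2+a^2}+\frac{1}{(x-i/2)^2+a^2}\Bigr)=2\Bigl(\frac{2a-1}{(2x)^2+(2a-1)^2}+\frac{2a+1}{(2x)^2+(2a+1)^2}\Bigr),
\]
which splits each odd $2m+1$ into $4m+1,4m+3$ and turns $(-1)^m$ into $(-1)^{[m/2]}$. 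All convergence questions, and the vanishing of the entire remainder, are inherited from the known expansion of $1/\cosh$, so the proof is essentially algebraic. Your proof is self-contained and explains the sign pattern directly as $\cos((2m+1)\pi/4)\cdot(-1)^m$; your residue computation and pairing check out. The price is redoing the Mittag-Leffler estimates.

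Three statements need correcting.

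\textbf{Convergence of the series.} The paired series is not absolutely convergent: its terms are asymptotic to $1/(2m+1)$, so it converges only conditionally. The grouping into blocks of four, with block sums $O(k^{-2})$ uniformly on compact sets, is the real argument. Conveniently, the rectangle with $\Img w=\pm N/2$ encloses exactly the poles with $m\le N-1$, so the contour yields the partial sums in the stated order.

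\textbf{The subtraction step is unnecessary.} Your bounds already give $\oint h(w)/(w-s)\,dw\to 0$. On $\Img w=\pm N/2$ one has $|\cosh(2\pi w)|=\cosh(2\pi\Real w)$; a constant lower bound, as you wrote, would not give decay. Hence $|h(w)|\le 2e^{-\pi|\Real w|}$, which is integrable along the horizontal sides, while $|w-s|\ge N/2-|s|$. So subtracting the $s=0$ case and evaluating $\sum_{m\ge0}(-1)^{[m/2]}/(2m+1)=\pi/(2\sqrt2)$ are correct but not needed.

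\textbf{The Liouville alternative has a gap.} Nothing you wrote shows that the entire function $h-S$ is bounded on $\CC$. Establishing that would require uniform bounds for the conditionally convergent series $S$ near its poles on a family of contours, which is more work than the contour argument itself. Rely on the first route.
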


\begin{proof}
Using the exponential representation of $\cosh$, we obtain
\begin{align*}
\frac{\cosh(s)}{\cosh(2s)}
= \frac{1}{2\sqrt{2}}\left(\frac{1}{\cosh(s+\pi i/4)} + \frac{1}{\cosh(s-\pi i/4)}\right).
\end{align*}
Thus, by the partial fraction expansion of $1/\cosh$, we have
\begin{align*}
\frac{\cosh(\pi s)}{\cosh(2\pi s)}
&= \frac{\sqrt{2}}{\pi}\sum_{m=0}^{\infty}(-1)^m(2m+1)\left(\frac{1}{(2s+i/2)^2+(2m+1)^2} + \frac{1}{(2s-i/2)^2+(2m+1)^2}\right).
\end{align*}
Using the identity
\begin{align*}
a\left(\frac{1}{(x+i/2)^2+a^2} + \frac{1}{(x-i/2)^2+a^2}\right)
&= 2\left(\frac{2a-1}{(2x)^2+(2a-1)^2} + \frac{2a+1}{(2x)^2+(2a+1)^2}\right),
\end{align*}
which can be verified by putting both sides over a common denominator, we obtain
\begin{align*}
\frac{\cosh(\pi s)}{\cosh(2\pi s)}
&= \frac{2\sqrt{2}}{\pi}\sum_{m=0}^{\infty}(-1)^m\left(\frac{4m+1}{(4s)^2+(4m+1)^2} + \frac{4m+3}{(4s)^2+(4m+3)^2}\right)
\\
&= \frac{2\sqrt{2}}{\pi}\sum_{m=0}^{\infty}(-1)^{[m/2]}\frac{2m+1}{(4s)^2+(2m+1)^2}.
\end{align*}
This completes the proof.
\end{proof}

\begin{lem}\label{lem:1/2-bell-shaped}
Define
$$\vphi_{1/2}(s) = \sum_{m=0}^{\infty}\mathbf{1}_{[(2m+1)^2,\infty)}(s)
- \sum_{m=0}^{\infty}\mathbf{1}_{[4(2m+1)^2,\infty)}(s).$$
Then, $F_{1/2}$ and $\vphi_{1/2}$ satisfy the conditions of \cref{thm:Kwa-main}, and we have
$$\mathcal{L}F_{1/2}(i\xi) = \exp\left( -i\frac{3\pi^2}{32}\xi
+ \int_{-\infty}^{\infty}K(i\xi,s) \varphi_{1/2}(s)\,ds \right).$$
Consequently, $F_{1/2}$ is weakly bell-shaped. Since $F_{1/2}$ is smooth, it is strictly bell-shaped.
\end{lem}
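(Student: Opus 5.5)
The plan is to follow the proof of \cref{lem:0-bell-shaped} step by step, using \cref{lem:partial-cosh} in place of the partial fraction expansion of $1/\cosh$.

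\textbf{Step 1: the Fourier transform as a series.} For $\xi\in\RR$ we write
\[
\mathcal{L}F_{1/2}(i\xi)=\frac{2\sqrt2}{\pi}\lim_{\ep\to+0}e^{-i\xi\ep}\sum_{m\ge0}(-1)^{[m/2]}\frac{2m+1}{i\xi+(2m+1)^2}e^{-(2m+1)^2\ep}.
\]
We justify taking the limit inside the sum exactly as before, via the splitting
\[
\frac{2m+1}{i\xi+(2m+1)^2}=\frac{1}{2m+1}-\frac{i\xi}{(2m+1)(i\xi+(2m+1)^2)}.
\]
The second piece is handled by the $M$-test with majorant $|\xi|/(2m+1)^3$. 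For the first piece, the alternating estimate must be adapted to the sign pattern $(-1)^{[m/2]}$. Group the terms in pairs $m=2k,2k+1$. The paired quantity $b_{2k}(\ep)+b_{2k+1}(\ep)$ is decreasing in $k$, so the alternating series bound applies to the grouped series uniformly in $\ep\ge0$. The tails of the grouped series therefore tend to $0$ uniformly, and the interchange is justified.

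\textbf{Step 2: closed form.} Apply \cref{lem:partial-cosh} with $(4s)^2=i\xi$, i.e.\ $s=\tfrac14\sqrt{i\xi}$. This gives
\[
\mathcal{L}F_{1/2}(i\xi)=\frac{\cosh(\frac{\pi}{4}\sqrt{i\xi})}{\cosh(\frac{\pi}{2}\sqrt{i\xi})}.
\]

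\textbf{Step 3: the exponential representation.} Since $\vphi_{1/2}$ is a difference of two indicator sums, the integral splits as
\[
\int K\vphi_{1/2}=\int K\vphi_0-\int K\vphi_0^{(4)},
\]
where $\vphi_0^{(4)}$ uses the thresholds $4(2m+1)^2$. Both integrals converge absolutely, since $|K(i\xi,s)|\le \xi^2/s^3$. The computation in \cref{lem:0-bell-shaped} gives
\[
\int K\vphi_0=i\frac{\pi^2}{8}\xi-\sum_m\log\Bigl(1+\frac{i\xi}{(2m+1)^2}\Bigr).
\]
The same computation with thresholds $4(2m+1)^2$ gives
\[
\int K\vphi_0^{(4)}=i\frac{\pi^2}{32}\xi-\sum_m\log\Bigl(1+\frac{i\xi}{4(2m+1)^2}\Bigr).
\]
The linear terms combine to $i\frac{3\pi^2}{32}\xi$, which is cancelled by the prefactor $-i\frac{3\pi^2}{32}\xi$. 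The Weierstrass product for $\cosh$ then gives
\[
\exp(\cdots)=\prod_m\frac{1+\frac{i\xi}{4(2m+1)^2}}{1+\frac{i\xi}{(2m+1)^2}}=\frac{\cosh(\frac{\pi}{4}\sqrt{i\xi})}{\cosh(\frac{\pi}{2}\sqrt{i\xi})},
\]
which matches Step 2. Here $a=c=0$.

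\textbf{Step 4: the hypotheses of \cref{thm:Kwa-main}.} We check them in turn.
\begin{itemize}
\item \emph{Condition (a).} Write $\vphi_{1/2}(s)=N_1(s)-N_4(s)$, where $N_1(s)=\#\{m:(2m+1)^2\le s\}$ and $N_4(s)=\#\{m:4(2m+1)^2\le s\}$. We have $\vphi_{1/2}=0$ for $s<1$ and $\vphi_{1/2}\ge0$ for $s\ge1$, so the sign change for $k=0$ occurs at $0$. For general $k$, one needs $\vphi_{1/2}$ to be nondecreasing on $s>0$, so that $\vphi_{1/2}-k$ changes sign at most once. The jumps $+1$ occur at odd squares $(2m+1)^2$, and the jumps $-1$ occur at $(4m+2)^2$. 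Ordered on $\sqrt{s}$, the jump points are $1,2,3,5,6,7,9,10,\dots$. Hence the function rises and dips, and is \emph{not} monotone; it oscillates between consecutive values. Checking the values: $\vphi_{1/2}=1$ on $[1,4)$, $0$ on $[4,9)$, $1$ on $[9,25)$ (since $6^2=36>25$), $2$ on $[25,36)$, $1$ on $[36,49)$, and so on. So $\vphi_{1/2}(s)-k$ takes values alternately $k'$ and $k'-1$ and never crosses back over an integer level. For $k$ at such a level, one interprets the sign change of $\vphi_{1/2}-k$ as allowing zeros, i.e.\ it is $\le0$ then $\ge0$. Verifying (a) carefully in this sense, via the monotonicity of $\sqrt{s}\mapsto N_1-N_4$ up to a bounded oscillation of size one, is the main obstacle.
\item \emph{Condition (b).} This holds since $|\vphi_{1/2}(s)|\le N_1(s)=O(\sqrt{s})$.
\item \emph{Condition (c).} This follows because $\mathcal{L}F_{1/2}(i\xi)$ is continuous on $\RR$ with value $1$ at $\xi=0$. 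Continuity holds since the zeros of $\cosh(\frac{\pi}{2}z)$ lie at $z=\pm i(2m+1)$, and these are never of the form $\sqrt{i\xi}$.
\item \emph{Decay and monotonicity of $F_{1/2}$.} $F_{1/2}$ tends to zero at $\pm\infty$. It is decreasing near $\infty$ because its leading term is $e^{-y}$, and near $-\infty$ it is identically $0$. It is smooth, as noted above.
\end{itemize}

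Having verified all the hypotheses, \cref{thm:Kwa-main} shows that $F_{1/2}$ is weakly bell-shaped, and since it is smooth, strictly bell-shaped.
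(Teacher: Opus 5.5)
Your Steps 1--3 and your checks of (b), (c) and of the behaviour of $F_{1/2}$ at $\pm\infty$ follow the paper's route; the paper only sketches this lemma by analogy with \cref{lem:0-bell-shaped}. Pairing the terms $m=2k,2k+1$ is a correct way to justify the interchange of limit and sum, which the paper omits.

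The gap is condition (a). It is exactly the hypothesis that is \emph{not} routine here, because, unlike $\varphi_0$, the function $\varphi_{1/2}$ is not monotone. You first assert that (a) forces $\varphi_{1/2}$ to be nondecreasing on $s>0$. That is false, and if it were true this route would be dead, since $\varphi_{1/2}$ drops from $1$ to $0$ at $s=4$. You then recover the right picture, but you call the admission of zeros an ``interpretation'' and leave the verification open as ``the main obstacle''. As written, (a) is not established.

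No interpretation is needed. By the paper's definition, a sign change is witnessed only by points with $f(x_j)f(x_{j+1})<0$, so zeros of $\varphi_{1/2}-k$ are ignored automatically. Since $\varphi_{1/2}$ is integer-valued, it suffices that it never drops by $2$ or more, i.e.\ that there are no $s_1<s_2$ with $\varphi_{1/2}(s_1)\ge\varphi_{1/2}(s_2)+2$. Then $\varphi_{1/2}-k$ can never pass from positive to negative, so it changes sign at most once.

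This follows at once from the values you already computed. Put $t=\sqrt{s}$ and take $j\ge0$. Then $\varphi_{1/2}$ equals $j+1$, $j$, $j+1$ on $t\in[4j+1,4j+2)$, $[4j+2,4j+3)$, $[4j+3,4j+5)$ respectively, and $\varphi_{1/2}=0$ for $s<1$. So for each integer $k\ge1$:
\begin{itemize}
\item $\varphi_{1/2}\le k$ for $s<(4k+1)^2$;
\item $\varphi_{1/2}\ge k$ for $s\ge(4k+1)^2$;
\item hence $\varphi_{1/2}-k$ changes sign exactly once.
\end{itemize}
For $k\le-1$ there is no sign change, and you already handled $k=0$.

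Structurally, $\varphi_{1/2}=P+\psi$, where $P(s)=\#\{m\ge0:(4m+3)^2\le s\}$ is nondecreasing and integer-valued, and $\psi=\sum_{m\ge0}\mathbf{1}_{[(4m+1)^2,(4m+2)^2)}$ takes values in $\{0,1\}$. Hence $\varphi_{1/2}(s_1)-\varphi_{1/2}(s_2)\le1$ whenever $s_1<s_2$, which is exactly the one-sided property (a) needs. With this added, your proof is complete.
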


\begin{proof}
Since the proof is similar to that of \cref{lem:0-bell-shaped}, we omit the details and only sketch the argument. The interchange of the limits is justified, by \cref{lem:partial-cosh}, we obtain
\begin{align*}
\mathcal{L}F_{1/2}(i\xi)
= \int_{-\infty}^{\infty}e^{-i\xi y}F_{1/2}(y)\, dy
&= \frac{2\sqrt{2}}{\pi}\sum_{m=0}^{\infty}\int_{0}^{\infty}(-1)^{[m/2]}(2m+1)e^{-(i\xi+(2m+1)^2)y}\, dy
\\
&= \frac{2\sqrt{2}}{\pi}\sum_{m=0}^{\infty}(-1)^{[\frac{m}{2}]}\frac{2m+1}{i\xi+(2m+1)^2}
\\
&= \frac{\cosh(\sqrt{i\xi}\pi/4)}{\cosh(\sqrt{i\xi}\pi/2)}.
\end{align*}
On the other hand, we have
\begin{align*}
\int_{-\infty}^{\infty}K(i\xi,s)\vphi_{1/2}(s)ds
&= i\frac{3\pi^2}{32}\xi + \sum_{m=0}^{\infty}\log\left(\frac{1+\frac{i\xi}{4(2m+1)^2}}{1+\frac{i\xi}{(2m+1)^2}}\right).
\end{align*}
Thus, by the Weierstrass product for $\cosh$, we have
\begin{align*}
\exp\left(- i\frac{3\pi^2}{32}\xi + \int_{-\infty}^{\infty}K(i\xi,s)\vphi_{1/2}(s)ds\right)
&= \prod_{m=0}^{\infty}\left(\frac{1+\frac{i\xi}{4(2m+1)^2}}{1+\frac{i\xi}{(2m+1)^2}}\right)
= \frac{\cosh(\sqrt{i\xi}\pi/4)}{\cosh(\sqrt{i\xi}\pi/2)}
= \mathcal{L}F_{1/2}(i\xi).
\end{align*}
Therefore, we get the representation \cref{eq:Fourier-exp-Kwa}.
\end{proof}

\begin{restatedthm}{thm:num-zero}
For each integer $n>0$, the function $G_{\{2\}^n}$ has exactly $n$ simple zeros lying on $\{iy\mid y\in \RR_{>0}\}$ and $\{1/2+iy\mid y\in \RR_{>0}\}$, respectively.
\end{restatedthm}

\begin{proof}
Here, we prove only that $G_{\{2\}^n}$ has exactly $n$ simple zeros on the imaginary axis. The proof for the vertical half-line $\{\tau\in\HH \mid \Real(\tau)=1/2\}$ is entirely analogous and is therefore omitted. By the above discussion, it suffices to show that $F_0^{(n)}$ has exactly $n$ simple zeros on $(0,\infty)$.

Since $F_0$ is strictly bell-shaped, $F_0^{(n)}$ changes sign exactly $n$ times on $(0,\infty)$. Hence, $F_0^{(n)}$ has at least $n$ distinct zeros on $(0,\infty)$. Suppose that $F_0^{(n)}$ has more than $n$ distinct zeros or has a multiple zero. Using the fact that $F_0^{(n)}$ tends to $0$ as $x\to0$ and as $x\to\infty$, together with Rolle's theorem, we then find that $F_0^{(n+1)}$ has at least $n+2$ distinct zeros.

We now use the fact that, on $(0,\infty)$, $F_0$ agrees with the restriction of a holomorphic function. In particular, $F_0^{(n+1)}$ cannot vanish identically on any nonempty open subinterval of $(0,\infty)$. Let
\[
0<x_1<\cdots<x_{n+2}
\]
be distinct zeros of $F_0^{(n+1)}$. Since $F_0^{(n+1)}$ tends to $0$ as $x\to0$ and as $x\to\infty$, each of the intervals
\[
(0,x_1),\quad (x_1,x_2),\quad \ldots,\quad
(x_{n+1},x_{n+2}),\quad (x_{n+2},\infty)
\]
contains a nonzero local extremum of $F_0^{(n+1)}$. At each such point, $F_0^{(n+2)}$ changes sign. Thus, $F_0^{(n+2)}$ has at least $n+3$ zeros at which it changes sign. This contradicts the fact that $F_0$ is strictly bell-shaped. Therefore, $F_0^{(n)}$ has exactly $n$ zeros on $(0,\infty)$, all of which are simple.
\end{proof}

It follows from \cref{thm:Kwa-bound,lem:0-bell-shaped,lem:1/2-bell-shaped} that there exist constants $M_0,M_{1/2}>0$ such that, for every $n=1,2,\dots$, all zeros of $F_{0}^{(n)}$ and $F_{1/2}^{(n)}$ in $(0,\infty)$ lie in $(0,nM_0]$ and $(0,nM_{1/2}]$, respectively. Hence, for each fixed $r$, the sequence ${y_r^{(*,n)}/n}_{n\geq r}$ is bounded and thus admits a convergent subsequence. In the next section, we show that the full sequence in fact converges to a specific value.

Finally, we apply \cref{thm:num-zero} to investigate the number of sign changes in the Fourier coefficients of $G_{\{2\}^n}$. Although this analysis does not provide a definitive resolution of the conjecture stated below, it yields a partial result supporting the conjecture.

We say that a sequence $(a_n)_{n\in\ZZ}$ changes sign exactly \(n\) times if \(n+1\) is the maximal length \(m\) of a strictly increasing sequence $n_1<n_2<\cdots<n_m$ in \(\ZZ\) such that $a_{n_i}a_{n_{i+1}}<0$ for \(j=1,\ldots,m-1\).

\begin{conj}\label{conj:sign-change-G}
Write the Fourier expansion of $G_{\{2\}^n}$ as
$$G_{\{2\}^n}(\tau) = \sum_{m=0}^{\infty}a_{m}q^{m}.$$
Then, $a_m\neq 0$ for all $m\geq 0$, and the sequence $(a_m)_{m\geq 0}$ changes the sign exactly $n$ times.
\end{conj}

\begin{lem}\label{lem:sign-change}
If the sequence $(a_m)_{m\geq 0}$ has exactly $n$ sign changes, then the function
\[
f(x)=\sum_{m=0}^{\infty} a_m x^m
\]
has at most $n$ positive zeros.
\end{lem}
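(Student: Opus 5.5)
This is Descartes' rule of signs for power series. We assume $f$ converges on some interval $(0,R)$ with $R>0$ (for $G_{\{2\}^n}$, $R=1$), and we count positive zeros there with multiplicity. The argument is an induction on $n$ via the standard multiply-by-$(x-c)$ trick. We allow zero coefficients; sign changes are counted among the nonzero terms only, as in the definition above.

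\emph{Base case $n=0$.} All nonzero $a_m$ have the same sign. Since $f\not\equiv 0$ (otherwise the statement is vacuous or must be excluded), $f(x)$ is a sum of terms $a_mx^m$ of one sign, at least one of them nonzero. Hence $f$ has no zero on $(0,R)$.

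\emph{Inductive step.} Suppose $(a_m)$ has exactly $n\ge1$ sign changes. Let $m_0$ be the largest index such that the nonzero coefficients with index $\le m_0$ all share the sign of the first nonzero coefficient, and let $m_1>m_0$ be the next index with $a_{m_1}\neq0$. Then $a_{m_1}$ has the opposite sign, which gives a ``first sign change'' between $m_0$ and $m_1$. Pick any $c$ with $c^{m_1-m_0}$ between these indices in the usual sense; concretely, choose $c>0$ large enough, or consider $g(x)=f(cx)$ so that we may take $c=1$ after rescaling. Set $h(x)=f(x)/(x-c)$, or equivalently write $f=(x-c)h$.

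The cleaner route is the classical one, run in the reverse direction. Suppose instead that $f$ has $N$ positive zeros. Write $f(x)=x^{k}f_1(x)$ with $f_1(0)\neq0$; this removes the leading zero coefficients and does not change the number of sign changes. Consider $g(x)=x^{-\alpha}f_1(x)$ for a suitable real exponent $\alpha$, chosen between the indices of a sign change. By Rolle's theorem, $(x^{-\alpha}f_1)'$ has at least $N-1$ positive zeros. Moreover,
\[
x^{\alpha+1}\bigl(x^{-\alpha}f_1\bigr)'=\sum_m (m-\alpha)a_mx^m .
\]
Choose $\alpha$ strictly between the last index of the first sign block and the first index of the second block. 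Then the new coefficients $(m-\alpha)a_m$ have exactly $n-1$ sign changes: the factor $m-\alpha$ flips the sign of the first block only, which merges it with the second block. By the induction hypothesis, $N-1\le n-1$, so $N\le n$.

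\emph{Technical points.} Termwise differentiation is valid inside the radius of convergence. Rolle's theorem counts multiplicities correctly, since a zero of order $r$ of $g$ gives a zero of order $r-1$ of $g'$, and distinct zeros give intermediate critical points. The case where $f$ has infinitely many zeros accumulating at the boundary $R$ is excluded automatically: if $N\ge n+1$ zeros existed, we could apply the argument to any $n+1$ of them.

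The main obstacle is the bookkeeping that $(m-\alpha)a_m$ has exactly one fewer sign change. Since the exponents are integers and we only need $\alpha\notin\ZZ$, choosing any non-integer $\alpha$ in the gap makes this immediate.
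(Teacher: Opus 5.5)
Once you delete the abandoned $(x-c)$ paragraph, which is incoherent as written, your argument is correct and is essentially the paper's proof: induction on the number of sign changes, Rolle's theorem applied to $x^{-\alpha}f$, and the identity $x^{\alpha+1}\bigl(x^{-\alpha}f\bigr)'=\sum_m (m-\alpha)a_m x^m$, whose factor $m-\alpha$ flips the first sign block to match the second. The only differences are cosmetic: the paper takes the integer $\alpha=M$ (the index immediately preceding the first sign change, so $b_M=0$) and counts distinct zeros, whereas you take a non-integer $\alpha$ in the gap and count zeros with multiplicity, which gives a slightly stronger statement.
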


\begin{proof}
We proceed by induction on $n$. The case $n=0$ is trivial. Assume that the statement holds for $n=\mu-1$, and suppose that $(a_m)_{m\geq 0}$ has $\mu$ sign changes. Without loss of generality, we may assume that $a_0\neq 0$. Let $M$ be the index immediately preceding the first sign change. Define
$$g(x) = x^{M+1}\frac{d}{dx}\left(x^{-M}f(x)\right)
= x^{M+1}\frac{d}{dx}\left( x^{-M}\sum_{m=0}^{\infty}a_mx^m \right)
= \sum_{m=0}^{\infty}b_mx^m,$$
where $b_m=(m-M)a_m$. Then the sequence $(b_m)_{m\geq 0}$ has at most $\mu-1$ sign changes. By the induction hypothesis, $g$ has at most $\mu-1$ positive zeros. Since $x^{M+1}>0$ for $x>0$, it follows that
$$\frac{d}{dx}\left(x^{-M}f(x)\right)$$
also has at most $\mu-1$ positive zeros. Hence, by Rolle's theorem, $x^{-M}f(x)$ has at most $\mu$ positive zeros. Since $x^{-M}\neq 0$ for $x>0$, the same is true of $f(x)$. This completes the induction and hence the proof.
\end{proof}

\begin{cor}
Let $(a_m)_{m\geq 0}$ be the sequence given in \cref{conj:sign-change-G}. Then, the sequence changes sign at least $n$ times.
\end{cor}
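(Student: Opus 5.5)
The idea is to combine \cref{thm:num-zero} with \cref{lem:sign-change}, applied to the Fourier expansion restricted to the imaginary axis. On $\tau=iy$ with $y>0$ we have $q=e^{-2\pi y}\in(0,1)$. The map $y\mapsto e^{-2\pi y}$ is a bijection from $(0,\infty)$ onto $(0,1)$. Consequently the power series
\[
f(x)=\sum_{m=0}^{\infty}a_m x^m
\]
satisfies $f(e^{-2\pi y})=G_{\{2\}^n}(iy)$, and it converges for $|x|<1$ because $G_{\{2\}^n}$ is holomorphic on $\HH$. By \cref{thm:num-zero}, $f$ has exactly $n$ distinct zeros in $(0,1)$, and hence at least $n$ positive zeros.

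The argument is by contradiction. Suppose the sequence $(a_m)_{m\geq 0}$ changes sign exactly $\mu$ times with $\mu<n$. Then \cref{lem:sign-change} says $f$ has at most $\mu<n$ positive zeros, which contradicts the previous paragraph.

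Two preliminary points must be settled before this works.

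First, the $a_m$ must be real. This follows from \cref{eq:def-G}, since the coefficients of $G_{2m}$ are real multiples of $(2\pi i)^{2m}=(-4\pi^2)^m$. Equally, \cref{lem:eta-G} together with \cref{eq:eta-triple} shows that $D^n(\eta^3)/\eta^3$ has real $q$-coefficients.

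Second, $\mu$ must be finite, or else there is nothing to prove. If the sequence changes sign infinitely often, then it changes sign at least $n$ times trivially.

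The main obstacle is checking that \cref{lem:sign-change} applies as stated. That lemma is formulated for $f$ on its domain of convergence, and here the domain is $(0,1)$ rather than all of $(0,\infty)$. I would re-read its inductive proof and confirm that Rolle's theorem is only ever used on intervals inside the domain, with no use of behaviour at infinity. The termwise-differentiated series $g$ converges on the same disc, so the induction goes through on $(0,1)$ unchanged. It also works when some $a_m$ vanish, since zero terms are simply skipped when counting sign changes, and the proof's reduction to $a_0\neq0$ is harmless after dividing by a power of $x$.

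So $f$ has at most $\mu$ zeros in $(0,1)$. Combined with the $n$ zeros from \cref{thm:num-zero}, this gives $\mu\geq n$.
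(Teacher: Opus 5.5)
Your proposal is correct and is the same argument as the paper, which simply says the corollary follows immediately from \cref{thm:num-zero} and \cref{lem:sign-change}, with the substitution $q=e^{-2\pi y}$ on the imaginary axis left implicit. The extra points you verify are exactly the details the paper omits: that the $a_m$ are real, that infinitely many sign changes makes the claim trivial, and that the Rolle-based induction in \cref{lem:sign-change} only uses the interval $(0,1)$ where the series converges.
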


\begin{proof}
This follows immediately from \cref{thm:num-zero} and \cref{lem:sign-change}.
\end{proof}

\section{Asymptotic of zeros}\label{sec:asymptotic-zero}

In this section, we give the proof of \cref{thm:lim-zero,thm:lim-zero-gen}.

\begin{lem}\label{lem:sgn-series}
Let $x>1$, and let $(a_m)_{m\geq 0}$ and $(b_m)_{m\geq 0}$ be sequences of real numbers. For $n\geq 1$, define
$$A_n = \sum_{m=0}^{\infty} a_m x^{n b_m},$$
and assume that the series converges absolutely when $n=1$. Suppose that there exists an index $k$ with $a_k\neq 0$ such that $b_k>b_m$ for every $m\neq k$ with $a_m\neq 0$. Then, for all sufficiently large $n$, $A_n \neq 0$ and $\sgn A_n=\sgn a_k$.
\end{lem}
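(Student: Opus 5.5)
The plan is to isolate the dominant term and show that everything else is exponentially smaller. Write $A_n = a_k x^{nb_k}\bigl(1 + R_n\bigr)$ with
$$R_n = \sum_{m\neq k,\ a_m\neq 0} \frac{a_m}{a_k}\, x^{n(b_m-b_k)}.$$
It then suffices to show $R_n\to 0$ as $n\to\infty$. Once that is done, $|R_n|<1$ for large $n$, so $A_n\neq0$ and $\sgn A_n=\sgn a_k$, because $x^{nb_k}>0$.

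To prove $R_n\to0$, I will use dominated convergence for series. Each term with $m\neq k$ and $a_m\neq 0$ satisfies $b_m-b_k<0$. Since $x>1$, each such term tends to $0$ as $n\to\infty$. For the domination, for $n\ge1$ the quantity $b_m-b_k$ is negative, so
$$x^{n(b_m-b_k)}\le x^{b_m-b_k}.$$
Hence each term is bounded by $|a_m|x^{b_m}/(|a_k|x^{b_k})$. This bound is summable because $\sum|a_m|x^{b_m}$ converges by the absolute convergence assumption at $n=1$. Termwise limits may therefore be interchanged with the sum, giving $R_n\to0$.

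One small point is convergence of $A_n$ itself for $n\ge2$. It follows from the same bound: $|a_m|x^{nb_m}=|a_m|x^{b_m}x^{(n-1)b_m}$. For $m$ with $a_m\ne0$ we have $b_m\le b_k$, so $x^{(n-1)b_m}\le x^{(n-1)b_k}$, and the series is dominated by a constant multiple of the $n=1$ series.

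The main (mild) obstacle is exactly this uniformity. The hypothesis only gives the strict inequality $b_m<b_k$ termwise; the gaps $b_k-b_m$ may accumulate at $0$, so there is no uniform exponential gap. That is why I use dominated convergence rather than a geometric tail bound.
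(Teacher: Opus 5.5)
Your proposal is correct and follows essentially the same route as the paper: normalize by $x^{nb_k}$ (you also divide by $a_k$), bound each remaining term by a multiple of $|a_m|x^{b_m}$ using $n\ge 1$, and apply dominated convergence to get the limit $a_k$. Your additional check that $A_n$ converges absolutely for $n\ge 2$, which the paper leaves implicit, is valid.
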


\begin{proof}
Dividing $A_n$ by the positive quantity $x^{n b_k}$, we obtain
$$x^{-n b_k}A_n = a_k+\sum_{\substack{m\geq0\\m\neq k}} a_m x^{-n(b_k-b_m)}.$$
For every $m\neq k$ with $a_m\neq0$, we have $b_k-b_m>0$, and hence
$$a_m x^{-n(b_k-b_m)}\longrightarrow 0 \qquad (n\to\infty).$$
Moreover, for $n\geq1$,
$$\left|a_m x^{-n(b_k-b_m)}\right| \leq |a_m|x^{-(b_k-b_m)} = x^{-b_k}|a_m|x^{b_m}.$$
Since
$$\sum_{m=0}^{\infty}|a_m|x^{b_m}<\infty,$$
the dominated convergence theorem yields
$$\lim_{n\to\infty}x^{-n b_k}A_n=a_k.$$
Since $x^{n b_k}>0$, it follows that, for all sufficiently large $n$, $A_n \neq 0$ and $\sgn A_n=\sgn a_k$.
\end{proof}

\begin{restatedthm}{thm:lim-zero}[Restated on the imaginary axis]
Write the zeros of $G_{\{2\}^n}(\tau)$ on the imaginary axis as $iy_1^{(n)},iy_2^{(n)},\dots,iy_n^{(n)}$, where $y_1^{(n)}>y_2^{(n)}>\cdots>y_n^{(n)}>0$. Then, for any integer $r>0$, we have
$$\lim_{n\to \infty} \left( y_{r}^{(n)} - \left(n+\frac{1}{2}\right)\frac{1}{\pi r}\log\left(\frac{2r+1}{2r-1}\right)\right) = 0.$$
\end{restatedthm}

\begin{proof}
Let $\til{y}_r^{(n)}\ceq y_r^{(n)}/n$. By \cref{lem:eta-G} and \cref{eq:eta-triple}, $\til{y}_r^{(n)}$ is the $r$-th largest zero of the function
\begin{align*}
f_n:\RR_{>0}\to \RR\ ;\ f_n(y) \ceq \left(\frac{\pi}{4}\right)^n\sum_{m=0}^{\infty}(-1)^m(2m+1)^{2n+1}e^{-\pi ny(2m+1)^2/4}.
\end{align*}
Thus it suffices to show that
$$\til{y}_{r}^{(n)} = \left(1+\frac{1}{2n}\right)\frac{1}{\pi r}\log\left(\frac{2r+1}{2r-1}\right) + o\left(\frac{1}{n}\right).$$
Write $\lambda_m=\frac{\pi}{4}(2m+1)^2$ and $\Phi_m(y)=\log\lambda_m-\lambda_m y$. Then we have
\begin{align*}
f_n(y) = \sum_{m=0}^{\infty}(-1)^m(2m+1)\lambda_m^{n} e^{- ny\lambda_m}
= \sum_{m=0}^{\infty}(-1)^m(2m+1) e^{n\Phi_m(y)}.
\end{align*}
For each integer $r>0$, let $x_r\in\RR_{>0}$ be the unique value satisfying
$$\Phi_{r-1}(x_r) = \Phi_r(x_r).$$
Equivalently, $\log(\lambda_r/\lambda_{r-1}) = (\lambda_r-\lambda_{r-1})x_r$. Simplifying, we obtain
$$x_r = \frac{1}{\pi r}\log\left(\frac{2r+1}{2r-1}\right).$$
Note that $x_1>x_2>\dots>0$. On the other hand, we have
\begin{equation}\label{eq:ineq-Phi}
\begin{split}
\Phi_r(x)-\Phi_{r-1}(x) &= \Phi_r(x)-\Phi_{r-1}(x) - \Phi_r(x_r)+\Phi_{r-1}(x_r)
\\
&= -\lambda_r x + \lambda_{r-1} x + \lambda_r x_r - \lambda_{r-1} x_r
= (\lambda_r - \lambda_{r-1})(x_r-x)
= 2\pi r(x_r-x).
\end{split}
\end{equation}
Thus, for $x_{r}<y<x_{r-1}$, we have
$$\Phi_{0}(y)<\cdots<\Phi_{r-2}(y)<\Phi_{r-1}(y)>\Phi_r(y)>\Phi_{r+1}(y)>\cdots$$
and
$$\Phi_{0}(x_r)<\cdots<\Phi_{r-2}(x_r)<\Phi_{r-1}(x_r)=\Phi_r(x_r)>\Phi_{r+1}(x_r)>\cdots .$$
Let $M_r=\Phi_{r-1}(x_r)=\Phi_r(x_r)$. Define
$$F_{n,r}(t) \ceq e^{-nM_r}f_n\left(x_r+\frac{t}{n}\right)
= \sum_{m=0}^{\infty}(-1)^m(2m+1) e^{n(\Phi_m(x_r)-M_r)}e^{-\lambda_m t}.$$
Since $\Phi_m(x_r)-M_r=0$ for $m=r-1,r$, while $\Phi_m(x_r)-M_r<0$ for $m\neq r-1,r$, the function $F_{n,r}(t)$ converges uniformly on every compact subset of $\RR_{>0}$ to 
$$\til{F}_r(t) = (-1)^{r}((2r+1) e^{-\lambda_r t}-(2r-1) e^{-\lambda_{r-1} t})$$
as $n\to\infty$. By a straightforward calculation, the unique zero of $\til{F}_r(t)$ is given by
$$t_r = \frac{1}{2\pi r}\log\left(\frac{2r+1}{2r-1}\right) = \frac{x_r}{2},$$
and this zero is simple. Therefore, for sufficiently small $\ep>0$ and all sufficiently large $n$, $F_{n,r}(t)$ has a unique simple zero in $(t_r-\ep,t_r+\ep)$. Denote this zero by $t_{n,r}$. Then $x_{n,r}=x_r+t_{n,r}/n$ is a zero of $f_n(y)$. Moreover, from
$$\lim_{n\to\infty} n(x_{n,r}-x_r)
= \lim_{n\to\infty} t_{n,r} = t_r$$
and $t_r=x_r/2$, we obtain
$$x_{n,r} = x_r + \frac{x_r}{2n} + o\left(\frac{1}{n}\right).$$
Finally, we prove that $\til{y}_r^{(n)}=x_{n,r}$ for sufficiently large $n$. It is sufficient to show that $x_{n,r}$ is the $r$-th largest zero. By choosing $\ep>0$ sufficiently small, we may assume that the neighborhoods
$$U_i = (x_i-\ep, x_i+\ep)\ ,\quad i=1,\dots,r$$
are pairwise disjoint. By the above argument, for all sufficiently large $n$, $f_n(y)$ has a unique simple zero $x_{n,j}$ in each $U_j$. On each connected component
$$I\subset (x_r-\ep,\infty)\setminus\bigcup_{i=1}^{r}U_i,$$
there exists an integer $m$ with $1\leq m\leq r$ such that
$$\Phi_{0}(y)<\cdots<\Phi_{m-2}(y)<\Phi_{m-1}(y)>\Phi_m(y)>\Phi_{m+1}(y)>\cdots $$
for all $y\in I$. Therefore, using \cref{lem:sgn-series}, $f_n$ does not vanish on $I$ for sufficiently large $n$. Consequently, we obtain
$$\til{y}_{r}^{(n)} = \left(1+\frac{1}{2n}\right)x_r + o\left(\frac{1}{n}\right).$$
This completes the proof.
\end{proof}

\begin{restatedthm}{thm:lim-zero}[Restated on the half-line $1/2+i\RR_{>0}$]
Write the zeros of $G_{\{2\}^n}(\tau)$ on the half-line $\{\tau\in\HH\mid\Real(\tau)=1/2\}$ as $\frac{1}{2}+iy_1^{(n)},\frac{1}{2}+iy_2^{(n)},\dots,\frac{1}{2}+iy_n^{(n)}$, where $y_1^{(n)}>y_2^{(n)}>\cdots>y_n^{(n)}>0$. Then, for any integer $r>0$, we have
$$\lim_{n\to \infty} \left( y_{r}^{(n)} - \left(n+\frac{1}{2}\right)\frac{1}{2\pi r}\log\left(\frac{4r+1}{4r-1}\right)\right) = 0.$$
\end{restatedthm}

\begin{proof}
The proof is exactly the same as above, with $f_n$ replaced by
\begin{align*}
g_n:\RR_{>0}\to \RR\ ;\ g_n(y) \ceq \left(\frac{\pi}{4}\right)^n\sum_{m=0}^{\infty}(-1)^{[m/2]}(2m+1)^{2n+1}e^{-\pi ny(2m+1)^2/4}.
\end{align*}
Note that the only difference is the sign pattern of the terms in the infinite sum.
\end{proof}

\begin{restatedthm}{thm:lim-zero-gen}
For any integers $r>0$ and $j$, there exists a sequence $\{z_{r,j}^{(n)}\}_{n\geq r}$ of zeros of $G_{\{2\}^n}$, such that
$$\lim_{n\to \infty} \left( z_{r,j}^{(n)} - \left(\frac{j}{r} + i\left(n+\frac{1}{2}\right)\frac{1}{\pi r}\log\left(\frac{2r+1}{2r-1}\right)\right)\right) = 0.$$
\end{restatedthm}

\begin{proof}
If we carry out the argument in the first half of the proof of \cref{thm:lim-zero} without restricting to the imaginary axis, then the function $f_n$ is replaced by
\begin{align*}
h_n:\HH\to \CC\ ;\ h_n(\tau) = \left(\frac{\pi}{4}\right)^n\sum_{m=0}^{\infty}(-1)^m(2m+1)^{2n+1}e^{n\Phi_m(y) + i\lambda_m x}, \quad \tau=x+iy
\end{align*}
Accordingly, for each $r>0$, setting
$$H_{n,r}(x,t) = e^{-nM_r}h_n\left(x+i\left(x_r+\frac{t}{n}\right)\right),$$
we find that $H_{n,r}(x,t)$ converges to
$$\til{H}_r(x,t) = (-1)^{r}((2r+1) e^{-\lambda_r t + i\lambda_rx}-(2r-1) e^{-\lambda_{r-1} t + i\lambda_{r-1}x}).$$
The zeros of $\widetilde{H}_r$ are given by
$$x = \frac{j}{r}
\quad \text{and}\quad
t=\frac{1}{2\pi r}\log\left(\frac{2r+1}{2r-1}\right).$$
The same argument as in the proof of \cref{thm:lim-zero} then yields the desired sequence of zeros of $G_{\{2\}^n}(\tau)$. This completes the proof.
\end{proof}

\section{Global properties of zeros}\label{sec:global-zero}

In this section, we give the proof of \cref{thm:equiv-zero,thm:simple-zero,thm:trans-zero}. Let $E_k(\tau)\ceq \frac{1}{\zeta(k)}G_k(\tau)$ for even $k\geq 2$. Then all the Fourier coefficients of $E_k$ are rational. In particular, its constant term is $1$. In the following arguments, we apply statements formulated in terms of $E_k$ to the corresponding statements for $G_k$. Indeed, since
$$
\zeta(k)=-\frac{1}{2}\frac{(2\pi i)^kB_k}{k!},
\quad B_k:k\text{-th Bernoulli number}
$$
for even $k>0$, the quantity $G_k/(2\pi i)^k$ is a rational multiple of $E_k$. Therefore, such applications are justified after multiplying by a suitable power of $2\pi i$.

\subsection{Simplicity of zeros}

\begin{thm}[Corollary 2 of Gun--Oesterl\'{e}\cite{GO22}]\label{lem:GO-poly}
Let $P$ be a nonzero element of the unique factorization domain $\overline{\QQ}[X,Y,Z]$. Let $a$ be a
zero of $\psi_P=P(E_2,E_4,E_6)$ in $\HH$ and $e$ be its order. There exists an irreducible factor $R$ of $P$, unique up to multiplication by a scalar, such that $\psi_R(a)=0$, where $\psi_R=R(E_2,E_4,E_6)$. The function $\psi_R$ has a simple zero at $a$ and the $R$-adic valuation of $P$, denoted by $v_R(P)$, is $e$.
\end{thm}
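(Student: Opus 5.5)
The idea is to make the statement a consequence of an algebraic-independence result at the single point $a$, combined with the Ramanujan differential system. Set $p=(E_2(a),E_4(a),E_6(a))\in\CC^3$ and consider the prime ideal
$$\mathfrak{p}_a=\{Q\in\overline{\QQ}[X,Y,Z]\mid Q(p)=0\}.$$

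The first step uses Chudnovsky's theorem \cite{Chu76}: for every $a\in\HH$, at least two of $E_2(a),E_4(a),E_6(a)$ are algebraically independent over $\overline{\QQ}$. Hence the $\overline{\QQ}$-Zariski closure of $p$ in $\mathbb{A}^3$ has dimension at least $2$, so $\mathfrak{p}_a$ has height at most $1$. Since $0\neq P\in\mathfrak{p}_a$, the ideal is nonzero, and therefore it is a height-one prime in the UFD $\overline{\QQ}[X,Y,Z]$. Such a prime is principal, $\mathfrak{p}_a=(R_0)$ with $R_0$ irreducible. Consequently an irreducible factor $R$ of $P$ satisfies $\psi_R(a)=0$ if and only if $R_0\mid R$, that is, iff $R$ is a scalar multiple of $R_0$. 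Because $\psi_P(a)=0$ means $P\in(R_0)$, such a factor exists, and it is unique up to scalars.

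The second step proves that $\psi_{R_0}$ has a simple zero at $a$. I introduce the derivation $\partial$ on $\overline{\QQ}[X,Y,Z]$ defined by Ramanujan's formulas
$$\partial X=\tfrac{X^2-Y}{12},\qquad \partial Y=\tfrac{XY-Z}{3},\qquad \partial Z=\tfrac{XZ-Y^2}{2},$$
which satisfies $D\psi_Q=\psi_{\partial Q}$ for every $Q$. Suppose $\psi_{R_0}$ vanished to order at least $2$ at $a$. Then $\psi_{\partial R_0}(a)=0$, so $\partial R_0\in\mathfrak{p}_a=(R_0)$, and we can write $\partial R_0=L R_0$ for some polynomial $L$. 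Setting $f=\psi_{R_0}$, this gives the linear ODE $Df=\psi_L\, f$ on $\HH$, with $\psi_L$ holomorphic. Its solutions are $f=C\exp\bigl(2\pi i\int\psi_L\bigr)$ locally, so $f$ is either identically zero or nowhere zero. The second alternative is ruled out by $f(a)=0$. The first is ruled out by the algebraic independence of $E_2,E_4,E_6$ as functions, since $R_0\neq0$. Hence the zero of $\psi_{R_0}$ at $a$ is simple.

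Finally, write $P=R_0^{v}Q$ with $v=v_{R_0}(P)$ and $R_0\nmid Q$. Then $Q\notin\mathfrak{p}_a$, so $\psi_Q(a)\neq0$, and
$$\operatorname{ord}_a\psi_P=v\cdot\operatorname{ord}_a\psi_{R_0}+0=v.$$
This gives $e=v_{R_0}(P)$.

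The main obstacle is the first step: the whole argument depends on the transcendence-degree bound at an arbitrary point $a$, including CM points and points where $q$ is algebraic. Chudnovsky's theorem covers this uniformly, and if needed one can instead invoke Nesterenko \cite{Nes96}, which gives transcendence degree at least $3$ for $q(a),E_2(a),E_4(a),E_6(a)$ and hence at least $2$ for $E_2(a),E_4(a),E_6(a)$. The remaining steps are formal.
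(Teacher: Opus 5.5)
The paper does not prove this statement; it quotes it from Gun--Oesterl\'{e} \cite{GO22} and uses it as a black box in the proof of \cref{thm:simple-zero}. Your argument is correct. It gives a self-contained proof using only the transcendence input the paper records as \cref{thm:CN}, together with Ramanujan's derivation.

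Your principality of $\mathfrak{p}_a$ is a clean way to package what the bound $\operatorname{trdeg}\geq 2$ gives. It is equivalent to saying that coprime $P,Q\in\overline{\QQ}[X,Y,Z]$ give $\psi_P,\psi_Q$ with no common zero in $\HH$. It also delivers existence, uniqueness up to scalars, and $e=v_{R_0}(P)$ in one step.

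The simplicity step handles the only delicate case, $R_0\mid\partial R_0$, through the linear equation $D\psi_{R_0}=\psi_L\psi_{R_0}$. This step is genuinely needed, because irreducibles that divide their own derivative do exist. For example, $\partial(Y^3-Z^2)=X(Y^3-Z^2)$, and this corresponds to the nowhere-vanishing $\Delta$.

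The $\overline{\QQ}$-hypothesis enters only through the height bound, as it should. With complex coefficients the uniqueness claim fails: $X-E_2(a)$ and $Y-E_4(a)$ are non-associate irreducibles that both vanish at $a$.
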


\begin{restatedthm}{thm:simple-zero}
For each integer $n>0$, all zeros of $G_{\{2\}^n}$ are simple.
\end{restatedthm}

\begin{proof}
Let $P_n(X,Y,Z)\in\QQ[X,Y,Z]$ be such that $G_{\{2\}^n}=P_n(G_2,G_4,G_6)$. Suppose that $a\in\HH$ is a zero of $G_{\{2\}^n}$ of multiplicity $e\geq 2$. Then, by \cref{lem:GO-poly}, there exists an irreducible factor $R$ of $P_n$ such that
$$R(G_2,G_4,G_6)(a) = 0
\quad\text{and}\quad
v_R(P_n) = e\geq 2.$$
Let $S=P_n/R^e$. First, we consider the case where
$$\frac{\partial R}{\partial X} \neq 0.$$
Then, we have
\begin{align}\label{eq:deri-P}
\frac{\partial P_n}{\partial X} = eR^{e-1}S\frac{\partial R}{\partial X} + R^e\frac{\partial S}{\partial X}.
\end{align}
Since $R$ is irreducible and $\frac{\partial R}{\partial X} \neq 0$, we obtain
$$v_R\left(\frac{\partial P_n}{\partial X}\right) =e-1.$$
On the other hand, from \cref{eq:der-G}, we have
\begin{equation}\label{eq:E_2-der}
\frac{\partial P_n}{\partial X} = P_{n-1}.
\end{equation}
Thus, $a$ is a zero of $G_{\{2\}^{n-1}}$ of multiplicity $e-1$. Hence, $a$ is a zero of $D(G_{\{2\}^{n-1}})$ of multiplicity $e-2$. Now, by \cref{lem:eta-G}, we obtain
\begin{align*}
D(G_{\{2\}^{n-1}}(\tau))
&= \frac{8^{n-1}\pi^{2n-2}}{(2n-1)!}\frac{D^n(\eta(\tau)^3)}{\eta(\tau)^3}
- \frac{8^{n-1}\pi^{2n-2}}{(2n-1)!}\frac{D^{n-1}(\eta(\tau)^3)}{\eta(\tau)^3}\frac{D(\eta(\tau)^3)}{\eta(\tau)^3}
\\
&= \frac{1}{4\pi^2}(n(2n+1)G_{\{2\}^{n}}(\tau)
- 3G_{2}(\tau)G_{\{2\}^{n-1}}(\tau)).
\end{align*}
Since $a$ is a zero of $G_{2}G_{\{2\}^{n-1}}$ of multiplicity at least $e-1$ and a zero of $D(G_{\{2\}^{n-1}})$ of multiplicity $e-2$, it follows that $a$ is a zero of $G_{\{2\}^{n}}$ of multiplicity $e-2$. This contradicts our initial assumption that $a$ is a zero of $G_{\{2\}^{n}}$ of multiplicity $e$. Second, we consider the case where 
$$\frac{\partial R}{\partial X} = 0.$$
Then, from \cref{eq:deri-P}, $R$ is an irreducible factor of
$$\frac{\partial^j P_n}{\partial X^j} \quad (j=0,1,2,\dots).$$
Therefore, from \cref{eq:E_2-der}, $R$ divides
$$\frac{\partial^n P_n}{\partial X^n} = P_0 = 1.$$
Hence $R$ is constant, which contradicts $R(G_2,G_4,G_6)(a)=0$.
\end{proof}

\subsection{$SL_2(\ZZ)$-inequivalence of zeros}

El Basraoui and Sebbar \cite{BS10} showed that $E_2$ has infinitely many zeros that are pairwise $SL_2(\ZZ)$-inequivalent, and Meher \cite{Meh13} proved the analogous result for quasimodular forms of depth one. A similar statement holds for quasimodular forms of maximal depth.

\begin{restatedthm}{thm:equiv-zero}
Let $f$ be a nonzero quasimodular form of weight $k$ and depth $k/2$. Suppose that $f$ has a simple zero. Then $f$ has infinitely many $SL_2(\ZZ)$-inequivalent zeros in the half-strip. In particular, the same holds for $G_{\{2\}^n}$ for every integer $n>0$.
\end{restatedthm}

\begin{proof}
We prove this by following the same strategy as in the proof of Theorem 3.5 in \cite{BS10}. From the quasimodular transformation, for $\tau\in\HH$ and $g=\begin{pmatrix}
a & b \\ c & d
\end{pmatrix}\in SL_2(\ZZ)$, we have
\begin{align}\label{trans-mod}
\frac{1}{(c\tau+d)^k}f(g\tau) = \sum_{r=0}^{k/2} \left(\frac{c}{c\tau+d}\right)^r f_r(\tau)\ ,\quad \text{where}\quad 0\neq f_{k/2}\in\CC.
\end{align}
Therefore, if the equation
$$\Psi(v,\tau) \ceq \sum_{r=0}^{k/2}(\tau+v)^{k/2-r}f_r(\tau) = 0$$
admits a solution $(v,\tau)\in\QQ\times\HH$, then $g\tau$ is a zero of $f(\tau)$, where $g=\begin{pmatrix}
a & b \\ c & d
\end{pmatrix}\in SL_2(\ZZ)$ with $c\neq 0$ is an element corresponding to $v=d/c$. We fix a simple zero $\tau_0$ of $f(\tau)$ and $g_0=\begin{pmatrix}
a_0 & b_0 \\ c_0 & d_0
\end{pmatrix}\in SL_2(\ZZ)$ with $c_0\neq 0$. Then, the pair $(d_0/c_0,g_0^{-1}\tau_0)$ is a solution of $\Psi=0$. In \cref{trans-mod}, multiplying by $(\tau+d/c)^{k/2}$ both sides, and then differentiating both sides with respect to $\tau$, we obtain
\begin{align*}
c^{-k/2}\frac{f'(g\tau)}{(c\tau+d)^{k/2+2}} - c^{1-k/2}\frac{k}{2}\frac{f(g\tau)}{(c\tau+d)^{k/2+1}}
= \frac{d}{d\tau}\sum_{r=0}^{k/2} \left(\tau+\frac{d}{c}\right)^{k/2-r} f_r(\tau).
\end{align*}
Thus, since $\tau_0$ is a simple zero of $f$, we have
$$\frac{d\Psi}{d\tau}(d_0/c_0,g^{-1}_0\tau_0) \neq 0.$$
Therefore, by the implicit function theorem, there exist a neighborhood $V\subset \RR$ of $d_0/c_0$ and a real-analytic function $\vphi:V\to\HH$ such that 
$$\vphi(d_0/c_0) = g_0^{-1}\tau_0
\quad\text{and}\quad
\Psi(v,\vphi(v)) = 0.$$
We claim that $\vphi$ is not constant on $V$. Indeed, if $\vphi$ were constant, then $\Psi(v,g_0^{-1}\tau_0)=0$ for all $v\in V$. It would follow that $\Psi(X-g_0^{–1}\tau_0,g_0^{-1}\tau_0)$ vanishes identically as a polynomial in $X$, which contradicts $f_{k/2}\neq 0$. Since $V\cap\QQ$ is infinite and $\vphi$ is not constant, the set $\{g_0\vphi(v)\mid v\in V\cap \QQ\}$ is infinite. Shrinking $V$ if necessary, we may assume that $g_0\vphi(V)$ is contained in a finite union of fundamental domains $\bigcup_{\gamma:\text{finite}}\gamma\mathcal{F}$. Thus, it follows that $f$ has infinitely many zeros $SL_2(\ZZ)$-inequivalent zeros. By applying a suitable $T$-transformation, these zeros can be moved into the half-strip.
\end{proof}

\subsection{Transcendence of zeros}\label{subsec:trans}

\begin{thm}[Chudnovsky\cite{Chu76}, generalized by Nesterenko\cite{Nes96}]\label{thm:CN}
For each $\tau\in\HH$, we have
$$\mathrm{trdeg}_{\QQ}\{E_2(\tau),E_4(\tau),E_6(\tau)\}\geq 2.$$
\end{thm}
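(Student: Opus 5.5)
We would not reprove this from scratch. Instead we would deduce it from Nesterenko's theorem on algebraic independence, with Chudnovsky's theorem as an alternative route. In Ramanujan's notation $P=E_2$, $Q=E_4$, $R=E_6$, Nesterenko \cite{Nes96} proves that for every $\tau\in\HH$, with $q=e^{2\pi i\tau}$,
$$\mathrm{trdeg}_{\QQ}\,\QQ\bigl(q,E_2(\tau),E_4(\tau),E_6(\tau)\bigr)\geq 3.$$
(There $q$ is any complex number with $0<|q|<1$, and $P,Q,R$ are viewed as $q$-series.) Adjoining the single element $q$ raises transcendence degree by at most one. Hence $\mathrm{trdeg}_{\QQ}\QQ(E_2(\tau),E_4(\tau),E_6(\tau))\geq 2$, which is the claim. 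In this route the only thing to check is that the normalizations of $E_k$ in this section agree with Ramanujan's $P,Q,R$. They do: all three have constant term $1$ and rational Fourier coefficients, namely $1-24\sum\sigma_1(n)q^n$, $1+240\sum\sigma_3(n)q^n$ and $1-504\sum\sigma_5(n)q^n$.

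To give the statement its historical attribution, we would also sketch Chudnovsky's original route \cite{Chu76}. Let $L_\tau=\ZZ\tau+\ZZ$ be the lattice, let $\wp$ be its Weierstrass function with invariants $g_2,g_3$, let $\omega$ be a period, and let $\eta_{\mathrm{W}}$ be the corresponding quasi-period of the Weierstrass $\zeta$-function. Chudnovsky's theorem says that $\mathrm{trdeg}_{\QQ}\QQ(g_2,g_3,\omega/\pi,\eta_{\mathrm{W}}/\pi)\geq 2$. For a rescaled lattice $\lambda L_\tau$ one has the classical formulas
$$g_2=\tfrac{4\pi^4}{3}\,E_4(\tau)\,\lambda^{-4},\qquad g_3=\tfrac{8\pi^6}{27}\,E_6(\tau)\,\lambda^{-6},$$
and the Legendre-type relation expressing $\eta_{\mathrm{W}}$ for the period $\lambda$ through $\tfrac{\pi^2}{3}E_2(\tau)\lambda^{-1}$. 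We would choose the scaling $\lambda=\pi$, or more conveniently a normalization making $\omega/\pi$ equal to $1$. Then $g_2,g_3,\eta_{\mathrm{W}}/\pi$ become fixed rational multiples of $E_4(\tau),E_6(\tau),E_2(\tau)$, while $\omega/\pi$ is algebraic. Chudnovsky's bound then transfers directly to $\{E_2(\tau),E_4(\tau),E_6(\tau)\}$.

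The main obstacle is therefore only bookkeeping: getting the powers of $\pi$ and the period normalization exactly right in the second route, so that no transcendental factor is lost in the transfer. This is why we would present the Nesterenko deduction as the proof, since it needs just the one-line transcendence-degree count above. We would keep the Chudnovsky computation as a remark.
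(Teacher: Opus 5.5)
Your proposal is correct. The paper gives no proof of this statement and simply cites Chudnovsky and Nesterenko; your deduction from Nesterenko's bound $\mathrm{trdeg}_{\QQ}\QQ(q,E_2(\tau),E_4(\tau),E_6(\tau))\geq 3$, by discarding the single element $q$ and checking that $E_2,E_4,E_6$ coincide with Ramanujan's $P,Q,R$, is the standard justification implicit in that citation. Your Chudnovsky remark also works: with the lattice $\pi(\ZZ\tau+\ZZ)$ one has $\omega/\pi=1$, and $g_2,g_3,\eta/\pi$ equal $\tfrac43E_4(\tau),\tfrac{8}{27}E_6(\tau),\tfrac13E_2(\tau)$.
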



\begin{restatedthm}{thm:trans-zero}
Let $f\in\overline{\QQ}[E_2,E_4,E_6]$ be a nonzero quasimodular form of weight $k$ and depth $k/2$. Then none of the zeros of $f$ in $\HH$ is a CM point. In particular, for every $n>0$, none of the zeros of $G_{\{2\}^n}$ is a CM point.
\end{restatedthm}

\begin{proof}
Let $\tau_0\in\HH$ be a zero of $f$.
Suppose, for contradiction, that
$\tau_0$ is a CM point. It is well known that $j(\tau_0)$ is algebraic; see, for example,
\cite[Proposition II.2.1]{Sil94}. By the classical identity
\begin{equation}\label{eq:def-j}
j(\tau) = 1728\frac{E_4(\tau)^3}{E_4(\tau)^3-E_6(\tau)^2},
\end{equation}
we have
$$\operatorname{trdeg}_{\QQ}\QQ\bigl(E_4(\tau_0),E_6(\tau_0)\bigr)\leq 1.$$
Since $f$ has weight $k$ and depth $k/2$, there exists a polynomial $P(X,Y,Z)\in\overline{\QQ}[X,Y,Z]$ such that
$$f(\tau) = P(E_2(\tau),E_4(\tau),E_6(\tau)),$$
and the coefficient of $X^{k/2}$ in $P$ is a nonzero element of $\overline{\QQ}$. In particular, $P(X,E_4(\tau_0),E_6(\tau_0))$ is a nonzero polynomial in $X$. Since $f(\tau_0)=0$, $E_2(\tau_0)$ is a root of $P(X,E_4(\tau_0),E_6(\tau_0))=0$. Therefore,
$$\operatorname{trdeg}_{\QQ}\QQ\bigl(E_2(\tau_0),E_4(\tau_0),E_6(\tau_0)\bigr)\leq 1,$$
which contradicts \cref{thm:CN}. Hence $\tau_0$ cannot be a CM point.
\end{proof}

\begin{remark}\label{Sebbarremark}
\cite[Corollary~1.10]{BP25} states that if $\tau$ is a zero of a quasimodular form with algebraic coefficients, then $j(\tau)$ is algebraic, and consequently $\tau$ is either a CM point or transcendental.
If this were true, then, together with \cref{thm:trans-zero}, it would imply that all zeros of $G_{{2}^n}$ are transcendental. However, the statement does not hold in the stated generality.
Indeed, let $\tau$ be a zero of $E_2$. Then \cite[Corollary~1.10]{BP25} would imply that $j(\tau)$ is algebraic. Since $E_2(\tau)=0$, it follows from \cref{eq:def-j} that
$$\operatorname{trdeg}_{\QQ}\QQ(E_2(\tau),E_4(\tau),E_6(\tau))\leq 1,$$
which contradicts \cref{thm:CN}. This remark is due to an observation by Professor Abdellah Sebbar.
\end{remark}

\begin{thm}[Gelfond\cite{Gel34}--Schneider\cite{Sch35}]\label{thm:GS}
If $a$ and $b$ are algebraic numbers with $a\neq 0,1$ and $b$ irrational, then the value $a^b$ is transcendental.
\end{thm}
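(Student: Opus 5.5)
We would not reprove this classical result in the paper; it is simply quoted. For completeness, here is the route we would follow: the auxiliary-function method of Gelfond and Schneider, in the streamlined form of the Schneider--Lang criterion.

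\emph{Setup.} Suppose, for contradiction, that $a,b$ are algebraic with $a\neq0,1$ and $b\notin\QQ$, and that $c=a^b=\exp(b\log a)$ is algebraic for some fixed choice of $\log a\neq0$. Let $K=\QQ(a,b,c)$, a number field of degree $\delta$. Consider the two entire functions $f_1(z)=e^z$ and $f_2(z)=e^{bz}$. They have order at most $1$, and they are algebraically independent over $\CC$ because $b$ is irrational: a polynomial relation would force two distinct exponentials $e^{(i+bj)z}$ to coincide. The ring $K[f_1,f_2]$ is stable under $d/dz$. At the points $z_m=m\log a$, for $m=1,\dots,N$, both functions take values in $K$, namely $f_1(z_m)=a^m$ and $f_2(z_m)=c^m$.

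\emph{Key steps.}
\begin{enumerate}
\item Use Siegel's lemma to construct a nonzero polynomial $P\in\ZZ[X,Y]$ of degree $\le L$ in each variable, with controlled coefficient height. It is chosen so that $F(z)=P(e^z,e^{bz})$ vanishes to order $\ge T$ at $z_1,\dots,z_N$. This imposes about $\delta NT$ linear conditions on $(L+1)^2$ unknowns, so we need $L^2\gg \delta NT$.
\item Extrapolate. Let $s$ be the least order such that some derivative $F^{(s)}(z_m)\neq0$ at one of the points.
\begin{enumerate}
\item The value $\xi=F^{(s)}(z_m)$ is a nonzero element of $K$ whose denominators and conjugates are bounded explicitly. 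Hence the Liouville-type size inequality gives a lower bound $\log|\xi|\ge -C\delta(\text{size})$.
\item On the other hand, $F$ has many zeros. Applying the Schwarz lemma to $F(z)/\prod_m (z-z_m)^{s}$ on a large disc $|z|\le R$ gives an upper bound on $|\xi|$ that decays like $e^{-sN\log R}$.
\end{enumerate}
\item Choose the parameters so that the upper bound beats the lower bound, for instance $L\approx (NT)^{1/2}$ with a large constant and $R$ a suitable multiple of $N$. This yields a contradiction. The one alternative is that $F$ vanishes identically, and that is excluded by the algebraic independence of $e^z$ and $e^{bz}$.
\end{enumerate}
The outcome is that the set of points where both functions take values in $K$ is finite, of size at most about $2\delta$. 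This contradicts the fact that the $z_m$ are infinitely many distinct points.

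\emph{Main obstacle.} The hard step is the parameter bookkeeping in steps 2 and 3. One must control the growth of the derivatives $F^{(t)}(z_m)$, using stability of $K[f_1,f_2]$ under differentiation and coefficient estimates for $D^t(e^{(i+bj)z})$. One must also make the Schwarz-lemma decay beat both the Siegel height bound and the Liouville bound simultaneously. We would handle this by quoting the Schneider--Lang theorem in the form ``two algebraically independent meromorphic functions of finite order with $K[f_1,f_2]$ closed under $d/dz$ take values in $K$ at only finitely many points'', rather than redoing the estimates.
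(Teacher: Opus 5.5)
This agrees with the paper, which gives no proof of this classical theorem and simply cites Gelfond and Schneider. Your optional reduction to the Schneider--Lang criterion, applied to $e^z$ and $e^{bz}$ at the points $m\log a$, is the standard correct route. There is one slip in the sketch: $\xi=F^{(s)}(z_m)$ carries a factor $s!$, and the least order $s$ is not bounded a priori, so a radius fixed as a multiple of $N$ cannot absorb the resulting loss of size $s\log s$. You must either let $R$ grow with $s$ (Lang takes $R=s^{1/2}$ for order-$1$ functions), or first bound $s<(L+1)^2$ via the Vandermonde determinant of the distinct exponents $i+bj$ and choose $T$ in terms of $R$.
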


\begin{cor}\label{cor:trans-lim-cor}
If $a$ is an algebraic real number, then the value $\frac{1}{\pi}\log a$ is transcendental. In particular, the limiting values for the zeros appearing in \cref{thm:lim-zero},
$$\frac{1}{\pi\kappa r}\log\left(\frac{2\kappa r+1}{2\kappa r-1}\right),
\qquad \kappa\in\{1,2\},$$
are transcendental.
\end{cor}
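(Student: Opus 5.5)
The plan is to reduce the statement to \cref{thm:GS} by passing to exponentials. Let $a>0$ be an algebraic real number with $a\neq 1$; for $a=1$ the value is $0$, so this hypothesis is implicitly needed. Suppose, for contradiction, that $\beta\ceq\frac{1}{\pi}\log a$ is algebraic. Then $\beta$ is a nonzero real algebraic number and $\log a=\pi\beta$.

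The key step is to rewrite this relation as a power of an algebraic number. We have $a=e^{\pi\beta}$. Using $e^{\pi}=(e^{\pi i})^{-i}=(-1)^{-i}$ with the principal branch, we get
$$a=e^{\pi\beta}=(-1)^{-i\beta}.$$
Now take the base $-1$ and the exponent $b\ceq -i\beta$. The base $-1$ is algebraic and different from $0$ and $1$. The exponent $b$ is algebraic and not real, because $\beta$ is a nonzero real number. In particular $b$ is irrational.

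By \cref{thm:GS}, $(-1)^{b}=a$ is therefore transcendental, which contradicts the assumption that $a$ is algebraic. Hence $\beta$ is transcendental.

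For the specific limiting values, we apply this with $a=\frac{2\kappa r+1}{2\kappa r-1}$. This number is rational, and it is greater than $1$ for every integer $r\geq1$ and $\kappa\in\{1,2\}$, so $a\neq 1$ and the argument above shows that $\frac{1}{\pi}\log a$ is transcendental. Dividing a transcendental number by the nonzero rational $\kappa r$ keeps it transcendental. This yields the transcendence of
$$\frac{1}{\pi\kappa r}\log\left(\frac{2\kappa r+1}{2\kappa r-1}\right).$$

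The only delicate point is branch bookkeeping in the definition of $a^b$. With the principal logarithm, $\log(-1)=\pi i$, so $(-1)^{-i\beta}=\exp(-i\beta\cdot\pi i)=e^{\pi\beta}$, exactly as required. \cref{thm:GS} holds for any fixed choice of $\log a$, so this causes no difficulty.
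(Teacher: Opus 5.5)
Your proof is correct and is essentially the paper's argument: assume $\frac{1}{\pi}\log a$ is algebraic, rewrite $a=e^{\pi\beta}=(-1)^{-i\beta}$, and contradict \cref{thm:GS}. Your explicit restriction to $a>0$, $a\neq 1$, which is what makes $-i\beta$ irrational, is a hypothesis the paper's statement leaves implicit (it is false for $a=1$), so adding it is a genuine improvement.
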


\begin{proof}
Suppose that $x=\frac{1}{\pi}\log a$ is algebraic. Then $-ix$ is an algebraic irrational number. Thus,
$$(-1)^{-ix} = e^{(-ix)\pi i} = e^{\log a} = a$$
is algebraic. This contradicts \cref{thm:GS}. Therefore, $x$ is transcendental.
\end{proof}

\section{Zeros inside Ford circle}\label{sec:fordcircle}

In this section, we prove \cref{thm:fordcircle}. Our argument follows a strategy similar to that used in \cite{IJT14}.

\begin{restatedthm}{thm:fordcircle}
Each Ford circle contains exactly two distinct (and simple) zeros of $G_{2,2}$.
\end{restatedthm}

\begin{lem}\label{lem:equiv-G22}
Let $\tau\in\HH$ and $g=\begin{pmatrix}
a & b \\ c & d
\end{pmatrix}\in SL_2(\ZZ)$ with $c\neq 0$. Then $G_{2,2}(g\tau)=0$ if and only if
$$\tau-\frac{\pi i}{G_2(\tau)+\sqrt{G_4(\tau)}} = -\frac{d}{c}
\quad\text{or}\quad
\tau-\frac{\pi i}{G_2(\tau)-\sqrt{G_4(\tau)}} = -\frac{d}{c}.$$
\end{lem}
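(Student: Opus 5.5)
The plan is to reduce the vanishing of $G_{2,2}(g\tau)$ to a quadratic condition on $G_2(\tau)$, using only the transformation laws of $G_2$ and $G_4$ together with the formula $G_{2,2}=\frac{1}{2}(G_2^2-G_4)$ from the introduction. This gives
$$G_{2,2}(g\tau)=0\iff G_2(g\tau)^2=G_4(g\tau).$$

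First I will pin down the normalizations. The classical law $E_2(g\tau)=(c\tau+d)^2E_2(\tau)+\frac{12c(c\tau+d)}{2\pi i}$ holds, and $G_2=\zeta(2)E_2=\frac{\pi^2}{6}E_2$. Together these give
$$G_2(g\tau)=(c\tau+d)^2G_2(\tau)-\pi i\,c(c\tau+d).$$
The weight-four law is simply $G_4(g\tau)=(c\tau+d)^4G_4(\tau)$. I will double-check the constant $\pi i$, since it is exactly what appears in the statement, and an error here would change the lemma.

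Next I substitute these laws into $G_2(g\tau)^2=G_4(g\tau)$ and divide by $(c\tau+d)^4$, which is nonzero on $\HH$. This yields the equivalent condition
$$\Bigl(G_2(\tau)-\frac{\pi i\,c}{c\tau+d}\Bigr)^2=G_4(\tau).$$
Fix either branch of $\sqrt{G_4(\tau)}$; the final statement is symmetric in the two signs, so the choice does not matter. The condition is then equivalent to
$$\frac{\pi i\,c}{c\tau+d}=G_2(\tau)\mp\sqrt{G_4(\tau)}$$
for some choice of sign.

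Since $c\neq0$, the left-hand side is nonzero. So whichever sign occurs has $G_2(\tau)\mp\sqrt{G_4(\tau)}\neq0$, and I may invert to get $\tau+\frac{d}{c}=\frac{\pi i}{G_2(\tau)\mp\sqrt{G_4(\tau)}}$. This is precisely one of the two displayed equations.

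For the converse, each displayed equation presupposes that its denominator is nonzero. Reversing the steps then recovers the squared identity and hence $G_{2,2}(g\tau)=0$.

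The only real obstacle is bookkeeping. I need the correct constant in the $G_2$ transformation law under the paper's normalization, where $G_k=\frac{1}{2}\GG_k$. I also need to handle the degenerate case $G_2(\tau)=\pm\sqrt{G_4(\tau)}$: there the corresponding expression is undefined, and that branch is excluded on both sides of the equivalence. Everything else is routine algebra.
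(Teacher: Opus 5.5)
Your proof is correct and follows the paper's argument. Both use $(G_2|_2g)(\tau)=G_2(\tau)-\frac{\pi i c}{c\tau+d}$ to reduce $G_{2,2}(g\tau)=0$ to $\bigl(G_2(\tau)-\frac{\pi i c}{c\tau+d}\bigr)^2=G_4(\tau)$, then invert using $c\neq 0$; the only cosmetic difference is that the paper first derives the general slash formula for $G_{\{2\}^n}$ from the generating function and sets $n=2$, whereas you substitute directly into $G_{2,2}=\frac{1}{2}(G_2^2-G_4)$.
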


\begin{proof}
From the transformation law of $G_2$:
\begin{align*}
(G_{2}|_{2}g)(\tau)
&= G_{2}(\tau) - \frac{c\pi i}{c\tau+d}
\end{align*}
and \cref{eq:def-G}, we obtain
\begin{align*}
(G_{\{2\}^n}|_{2n}g)(\tau)
&= \sum_{r=0}^{n}\frac{1}{(n-r)!}\left(-\frac{\pi i}{\tau+d/c}\right)^{n-r}G_{\{2\}^{r}}(\tau).
\end{align*}
This implies that
$G_{2,2}(g\tau)=0$ if and only if
\begin{align*}
0 &= G_{2,2}(\tau)
+ XG_{2}(\tau)
+ \frac{1}{2}X^2
\ ,\quad
X= -\frac{\pi i}{\tau+d/c}.
\end{align*}
Using the identity $(G_2)^2=2G_{2,2}+G_4$, it is equivalent to
$$X = -G_2(\tau)+\sqrt{G_4(\tau)}
\quad\text{or}\quad
X = -G_2(\tau)-\sqrt{G_4(\tau)}.$$
Equivalently,
$$\tau-\frac{\pi i}{G_2(\tau)+\sqrt{G_4(\tau)}} = -\frac{d}{c}
\quad\text{or}\quad
\tau-\frac{\pi i}{G_2(\tau)-\sqrt{G_4(\tau)}} = -\frac{d}{c}.$$
This completes the proof
\end{proof}

For simplicity, let $h_\pm(\tau):=G_2(\tau)\pm\sqrt{G_4(\tau)}$ and $g_\pm(\tau):=\tau-\frac{\pi i}{G_2(\tau)\pm\sqrt{G_4(\tau)}}=\tau-\frac{\pi i}{h_\pm(\tau)}$. 
Furthermore, we write $\tau=x+iy$ and let $g_\pm(\tau)=U_\pm(x,y)+iV_\pm(x,y)=U_\pm(\tau)+iV_\pm(\tau)$ and $C_\pm:=\zeta(2)\pm\sqrt{\zeta(4)}$. Moreover, we take branches of $\sqrt{E_4(\tau)}$ and $\sqrt{G_4(\tau)}$ for which the real parts are positive. We only consider the case $\Img\tau\geq 1$ below. In this region, the real part of $\sqrt{E_4(\tau)}$ is nonzero, since \cref{alpha} below gives $\alpha<1$. Hence, this choice of branch is well-defined.

\begin{lem}\label{lem:boundofEisenstein}
For each $\tau=x+iy\in\HH$, we have
\[
|E_2(\tau)-1|\le\frac{24r}{(1-r)^3}.
\]
Moreover, for each $\tau=x+iy\in\HH$ with $y \geq 1$, we have
\[
|\sqrt{E_4(\tau)}-1|\le1-\sqrt{1-\alpha},
\]
where $\alpha(y)\ceq\alpha\ceq\frac{240r(r^2+4r+1)}{(1-r)^5}$ and $r(y)\ceq r\ceq |q|$.
\end{lem}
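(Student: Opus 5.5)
The plan is to bound both quantities term by term from the $q$-expansions, using the crude estimate $\sigma_{k-1}(n)\le n^{k}$ (or the sharper $\sigma_{k-1}(n)\le n^{k-1}\sigma_0(n)$, replaced by $n^{k}$ for simplicity) and closed-form sums of the series $\sum n^j r^n$.

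For $E_2$ we have $E_2(\tau)=1-24\sum_{n\ge1}\sigma_1(n)q^n$. Since $\sigma_1(n)\le n^2$, we get
\[
|E_2(\tau)-1|\le 24\sum_{n\ge1}n^2r^n=\frac{24r(1+r)}{(1-r)^3}.
\]
This is slightly worse than the stated bound. To reach exactly $24r/(1-r)^3$, I will use the Lambert-series form instead: $\sum_{n}\sigma_1(n)q^n=\sum_{m\ge1} m q^m/(1-q^m)$. Then
\[
\Bigl|\sum_{n}\sigma_1(n)q^n\Bigr|\le\sum_{m} \frac{m r^m}{1-r^m}\le\frac{1}{1-r}\sum_m m r^m=\frac{r}{(1-r)^3}.
\]
Multiplying by $24$ gives the claim.

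For $E_4=1+240\sum\sigma_3(n)q^n$, the same Lambert argument gives
\[
\Bigl|\sum_n\sigma_3(n)q^n\Bigr|\le\frac{1}{1-r}\sum_m m^3r^m=\frac{r(r^2+4r+1)}{(1-r)^5}.
\]
Hence $|E_4(\tau)-1|\le\alpha$.

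Now let $y\ge1$, so $r\le e^{-2\pi}$, and check numerically that $\alpha<1$. Write $E_4=1+w$ with $|w|\le\alpha<1$. The branch with positive real part is $\sqrt{1+w}=\sum_k\binom{1/2}{k}w^k$, because this series lies in the right half-plane for $|w|<1$. Since $\binom{1/2}{k}$ for $k\ge1$ alternates with $(-1)^{k-1}$, we have
\[
\Bigl|\sum_{k\ge1}\binom{1/2}{k}w^k\Bigr|\le\sum_{k\ge1}\Bigl|\binom{1/2}{k}\Bigr|\alpha^k=1-\sqrt{1-\alpha},
\]
because $\sqrt{1-\alpha}=1-\sum_{k\ge1}|\binom{1/2}{k}|\alpha^k$.

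The only delicate points are confirming that $\sum m^3r^m$ has numerator $r(r^2+4r+1)$ over $(1-r)^4$, and keeping the extra $1/(1-r)$ from the Lambert denominator. Both are routine.
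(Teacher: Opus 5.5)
Your proof is correct. It reaches the same bounds as the paper, but by a partly different route.

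\textbf{The $E_2$ bound.} The paper does not prove this; it cites \cite[Lemma 2.3]{IJT14}. You derive it directly from the Lambert series, using $|1-q^m|\ge 1-r^m\ge 1-r$. This is the same trick the paper uses for $E_4$, so your lemma is self-contained.

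\textbf{The $E_4$ bound.} Your estimate $|E_4-1|\le\alpha$ is identical to the paper's.

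\textbf{The square-root step.} This is where the routes genuinely differ.
\begin{itemize}
\item The paper writes $|\sqrt{E_4}-1|=|E_4-1|/|\sqrt{E_4}+1|$. It bounds the denominator from below via $\Real\sqrt{E_4}=\sqrt{(|E_4|+\Real E_4)/2}\ge\sqrt{\Real E_4}\ge\sqrt{1-\alpha}$. This gives $\alpha/(1+\sqrt{1-\alpha})$, which equals $1-\sqrt{1-\alpha}$.
\item That argument uses only $\Real E_4>0$. This is exactly the condition that makes the positive-real-part branch well defined, and it needs no power series.
\item You use the majorant of the binomial series. This needs $|w|<1$, which is the same condition $\alpha<1$. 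It also needs the series to represent the branch with positive real part.
\item That identification is true, but you assert it rather than justify it. One line suffices: the series is continuous on the unit disc and equals $1$ at $w=0$. It can never be purely imaginary, since its square $1+w$ has positive real part. Hence it stays in the right half-plane.
\end{itemize}

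\textbf{A small omission.} To pass from $y=1$ to all $y\ge 1$, you should note that $\alpha$ is increasing in $r$. Then $\alpha\le\alpha(1)<1$. The paper states this explicitly.

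\textbf{Comparison.} The paper's argument is slightly leaner because it avoids series and branch identification. Yours is the standard majorant technique. It gives the same sharp constant, with equality at $w=-\alpha$.
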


\begin{proof}
The first inequality was obtained in \cite[Lemma 2.3]{IJT14}. Therefore, we only prove the second inequality. From the identity $E_4(\tau)=1+240\sum_{n\ge1}\frac{n^3q^n}{1-q^n}$, we see that
\[
|E_4(\tau)-1|\le240\sum\limits_{n\geq1}\frac{n^{3}r^{n}}{1-r^{n}}.
\]
On the other hand, applying $r\frac{d}{dr}$ to both sides of  $\sum_{n\geq0}r^n=\frac{1}{1-r}$ three times, we obtain
\[
\sum\limits_{n\geq0}n^3r^n=\frac{r(r^2+4r+1)}{(1-r)^4}.
\]
Therefore, 
\begin{equation}\label{alpha}
|E_4(\tau)-1|\le240\sum_{n\ge1}\frac{n^3r^n}{1-r^n}\le\frac{240}{1-r}\sum_{n\ge1}n^3r^n=\frac{240r(r^2+4r+1)}{(1-r)^5}= \alpha.
\end{equation}
Note that $0<\alpha<1$ for $y\ge1$. Indeed, $\alpha(y)$ is decreasing in $y$, and $\alpha(1)<1$.
We next estimate $|\sqrt{E_4(\tau)}-1|$. Using the identities
\begin{align*}
\Real(\sqrt{E_4(\tau)})^2-\Img(\sqrt{E_4(\tau)})^2
&=\Real(E_4(\tau)),\\
\Real(\sqrt{E_4(\tau)})^2+\Img(\sqrt{E_4(\tau)})^2
&=|E_4(\tau)|,
\end{align*}
together with $\Real(\sqrt{E_4(\tau)})>0$ and \cref{alpha}, we obtain
\[
\Real(\sqrt{E_4(\tau)})
= \sqrt{\frac{|E_4(\tau)|+\Real(E_4(\tau))}{2}}\ge
\sqrt{\Real(E_4(\tau))}\ge\sqrt{1-\alpha}.
\]
Thus, we obtain
\[ |\sqrt{E_4(\tau)}+1|\ge1+\Real(\sqrt{E_4(\tau)})\ge1+\sqrt{1-\alpha}. \]
As a consequence, we obtain
\begin{equation*}\label{sqrtE4bound}
|\sqrt{E_4(\tau)}-1|=\frac{|E_4(\tau)-1|}{|\sqrt{E_4(\tau)}+1|}\le \frac{\alpha}{1+\sqrt{1-\alpha}} = 1-\sqrt{1-\alpha}.
\end{equation*}
\end{proof}

By \cref{lem:boundofEisenstein}, we have
\begin{equation}\label{My}
|h_\pm(\tau)-C_\pm|\le\zeta(2)\frac{24r}{(1-r)^3}+\sqrt{\zeta(4)}\left(1-\sqrt{1-\alpha}\right) \eqcolon M(y).
\end{equation}
Note that both $\frac{24r}{(1-r)^3}$ and
$1-\sqrt{1-\alpha}$ are decreasing in $y$.
Hence, $M(y)$ is also decreasing in $y$.
A direct calculation at $y=1$ shows that $M(1)<C_-<C_+$. Therefore, $M(y)\le M(1)<C_-<C_+$ for all $y\ge1$.

\begin{lem}\label{lem1.04}
For each $\tau=x+iy\in \HH$ with $y\ge1.04$, we have
\[
|g_+'(\tau)-1|<1.
\]
\end{lem}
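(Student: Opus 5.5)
Since $g_+(\tau)=\tau-\pi i/h_+(\tau)$, we have
\[
g_+'(\tau)-1=\frac{\pi i\,h_+'(\tau)}{h_+(\tau)^2},
\]
so the plan is to bound $|h_+'(\tau)|$ from above and $|h_+(\tau)|$ from below and check that $\pi|h_+'|/|h_+|^2<1$ for $y\ge1.04$.

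The lower bound is already available: by \cref{My}, $|h_+(\tau)|\ge C_+-M(y)\ge C_+-M(1.04)>0$, and this is monotone in $y$.

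For the upper bound, write
\[
h_+'=\zeta(2)E_2'+\frac{\sqrt{\zeta(4)}\,E_4'}{2\sqrt{E_4}} .
\]
\begin{itemize}
\item Differentiating the $q$-series gives $E_2'=2\pi i\cdot(-24)\sum n\sigma_1(n)q^n$ and $E_4'=2\pi i\cdot 240\sum n\sigma_3(n)q^n$.
\item Bound these termwise with $|q|=r$, using $\sum_{n\ge1} n\sigma_1(n)r^n\le \frac{1}{1-r}\sum n^2r^n$. This works by writing $\sum n\sigma_1(n)q^n=\sum_m\sum_d md^2 q^{md}$ and bounding crudely, or more cleanly via the identity $\sum\sigma_1(n)r^n=\sum_d dr^d/(1-r^d)$ followed by applying $r\,d/dr$. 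We get closed forms like $\frac{r(1+r)}{(1-r)^4}$ and $\frac{r(1+11r+11r^2+r^3)}{(1-r)^6}$, up to the harmless $1/(1-r)$ factors.
\item For $|\sqrt{E_4}|$ in the denominator, use $|\sqrt{E_4}|\ge\Real\sqrt{E_4}\ge\sqrt{1-\alpha}$ from the proof of \cref{lem:boundofEisenstein}.
\end{itemize}
This gives an explicit upper bound $N(y)$ for $|h_+'(\tau)|$. $N(y)$ is decreasing in $y$, since every ingredient is increasing in $r$ and $r=e^{-2\pi y}$.

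Because $N$ is decreasing and $C_+-M$ is increasing, the ratio $\pi N(y)/(C_+-M(y))^2$ is decreasing in $y$. It therefore suffices to verify numerically, at $y=1.04$, that
\[
\pi N(1.04)<\bigl(C_+-M(1.04)\bigr)^2 .
\]

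The main obstacle is this last numerical check. With $r=e^{-2.08\pi}\approx 0.00145$, the quantity $N$ is dominated by $2\pi\bigl(24\zeta(2)+120\sqrt{\zeta(4)}\bigr)r$, which is about $2\pi(39.5+125)(0.00145)\approx1.5$. Meanwhile $C_+\approx 1.645+1.04=2.685$, so $(C_+-M)^2\approx 7$ and $\pi N\approx 4.7$. The margin is real but modest, so the bounds must not be made too crude: in particular, keep the $1/(1-r)$ factors sharp, and make sure the threshold $1.04$ is exactly where the constants close. If the crude divisor-sum bound is too lossy, I would fall back to bounding the first term $n=1$ exactly and the tail geometrically.
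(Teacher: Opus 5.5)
Your plan is the paper's proof: the same identity $g_+'-1=\pi i\,h_+'/h_+^2$, the same lower bound $|h_+|\ge C_+-M(y)$, the same termwise bounds on $E_2'$ and $E_4'$ together with $|\sqrt{E_4}|\ge\sqrt{1-\alpha}$, and the same monotonicity argument that reduces everything to a numerical check at $y=1.04$. Two details need correcting.

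First, the inequality $\sum_{n\ge1}n\sigma_1(n)r^n\le\frac{1}{1-r}\sum_{n\ge1}n^2r^n$ is false. The $r^2$ coefficients are $2\sigma_1(2)=6$ on the left and $1+4=5$ on the right, so it fails for small $r$, including $r=e^{-2.08\pi}$. The $\sigma_3$ analogue fails the same way, since $2\sigma_3(2)=18>17$. Your ``cleaner'' route gives the correct statement. Applying $r\,d/dr$ to $\sum_d dr^d/(1-r^d)$ yields $\sum_n n\sigma_1(n)r^n=\sum_d d^2r^d/(1-r^d)^2\le(1-r)^{-2}\sum_d d^2r^d=r(1+r)/(1-r)^5$. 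Similarly, $\sum_n n\sigma_3(n)r^n=\sum_d d^4r^d/(1-r^d)^2\le r(1+11r+11r^2+r^3)/(1-r)^7$. These are exactly the paper's bounds, so you need $(1-r)^{-2}$, not $(1-r)^{-1}$.

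Second, your back-of-the-envelope margin is too optimistic. With the bounds you actually propose, two effects are not negligible at $y=1.04$: the factor $1/\sqrt{1-\alpha(1.04)}\approx1.24$ on the $E_4'$ term, and the subtraction $M(1.04)\approx0.26$. Including them gives $\pi N(1.04)\approx5.71$ against $(C_+-M(1.04))^2\approx5.88$, a ratio of about $0.97$, not about $0.67$. The check does close at $1.04$, but with only about $3\%$ to spare. That is why the threshold sits where it does, and why no cruder estimate than the paper's can be used.
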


\begin{proof}
From $g_+(\tau)=\tau-\frac{\pi i}{h_+(\tau)}$, we have
\[
|g_+'(\tau)-1|=\frac{\pi |h_+'(\tau)|}{|h_+(\tau)^{2}|}.
\]
Thus, we need to compute a lower bound for $|h_+(\tau)|$ and an upper bound for $|h_+'(\tau)|$ for $\Img(\tau)\geq 1.04$. By \cref{My}, we obtain
\[
|h_+(\tau)|\ge C_+-M(y).
\]
Next, from $h_+(\tau)=G_2(\tau)+\sqrt{G_4(\tau)}$, we have 
\begin{align}\label{upperboundhplus}
|h_+'(\tau)|\le\zeta(2)|E_2'(\tau)|+\frac{\sqrt{\zeta(4)}}{2}\frac{|E_4'(\tau)|}{|\sqrt{E_4(\tau)}|}.
\end{align}
Using the same argument as \cref{alpha} yields
\[
|E_2'(\tau)|\le48\pi\frac{r(1+r)}{(1-r)^5} \;\text{ and }\; |E_4'(\tau)|\le480\pi\frac{r(1+11r+11r^2+r^3)}{(1-r)^7}.
\]
Moreover, by \cref{alpha}, we have $|\sqrt{E_4(\tau)}|\ge\sqrt{1-\alpha}$.
Combining the above estimates yields
\[
|h_+'(\tau)|\le\zeta(2)48\pi\frac{r(1+r)}{(1-r)^5}+240\pi\sqrt{\zeta(4)}\frac{r(1+11r+11r^2+r^3)}{\sqrt{1-\alpha}(1-r)^7}.
\]
Finally, we have
\[
|g_+'(\tau)-1|=\pi\frac{|h_+'(\tau)|}{|h_+(\tau)^2|}\le\pi\frac{\zeta(2)48\pi\frac{r(1+r)}{(1-r)^5}+240\pi\sqrt{\zeta(4)}\frac{r(1+11r+11r^2+r^3)}{\sqrt{1-\alpha}(1-r)^7}}{(C_+-M(y))^2}.
\]
Since the denominator of the fraction on the right-hand side is increasing in $y$, while the numerator is decreasing in $y$, the right-hand side is decreasing as a function of $y$. Moreover, substituting $y=1.04$ shows that the right-hand side is less than $1$. Therefore, it is less than $1$ for all $y\geq 1.04$.
\end{proof}

\begin{lem}\label{partialgplus}
For each $\tau=x+iy\in \HH$ with $y\ge1.04$, we have $\Real g_+'(\tau)=\partial_yV_+(x,y)>0$.
\end{lem}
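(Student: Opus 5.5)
The plan is to use the Cauchy--Riemann equations and then invoke \cref{lem1.04} directly. Since $g_+$ is holomorphic on the region $\Img\tau\geq 1$, we have $g_+'(\tau)=\partial_x U_+ + i\,\partial_x V_+$. Here $h_+$ does not vanish there because $|h_+-C_+|\le M(y)<C_+$, and the chosen branch of $\sqrt{G_4}$ is holomorphic since $\Real\sqrt{E_4}>0$. The Cauchy--Riemann equations give $\partial_x U_+=\partial_y V_+$, so
\[
\Real g_+'(\tau)=\partial_x U_+(x,y)=\partial_y V_+(x,y).
\]
This establishes the identity in the statement.

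For positivity, apply \cref{lem1.04}: for $y\ge 1.04$ we have $|g_+'(\tau)-1|<1$. Hence
\[
\Real g_+'(\tau)\ \ge\ 1-|g_+'(\tau)-1|\ >\ 0.
\]
Geometrically, $g_+'(\tau)$ lies in the open disc of radius $1$ centred at $1$, which is contained in the right half-plane.

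There is no serious obstacle; the analytic content is all in \cref{lem1.04}. The only points to state carefully are the holomorphy of $g_+$ and the identification of the derivative with respect to $y$ through the Cauchy--Riemann relation. Here $V_+$ is viewed as a real function of $(x,y)$, and $\partial_y V_+=\partial_x U_+=\Real g_+'$, as above.
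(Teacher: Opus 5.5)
Your proposal is correct and is essentially the paper's proof. You identify $\Real g_+'=\partial_yV_+$ via the Cauchy--Riemann equations and deduce positivity from the bound $|g_+'(\tau)-1|<1$ of \cref{lem1.04}. Your added justification that $g_+$ is holomorphic for $\Img\tau\ge1$ ($h_+\neq0$ since $M(y)<C_+$, and the branch of $\sqrt{G_4}$ is well defined) is a welcome detail that the paper leaves implicit.
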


\begin{proof}
Recall that, by the Cauchy–Riemann equations, $\Real g_+'(\tau)=\partial_y V_+(x,y)$. By \cref{lem1.04}, we have
\[
|\Real g_+'(\tau)-1|<1,
\] 
which implies $\Real g_+'(\tau)>0$.
\end{proof}

\begin{lem}\label{imagegplus}
For each $\tau=x+iy\in \HH$ with $y\in[1,1.04]$, we have
\[
V_+(x,y)<0.
\]
\end{lem}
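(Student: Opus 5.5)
We need $V_+(x,y)=\Img g_+(\tau)=y-\Real\bigl(\pi/h_+(\tau)\bigr)<0$ for $y\in[1,1.04]$. Indeed $\Img\bigl(-\pi i/h\bigr)=-\pi\,\Real(1/h)=-\pi\,\Real(h)/|h|^2$. So the claim is equivalent to
\[
\frac{\pi\,\Real h_+(\tau)}{|h_+(\tau)|^2}>y .
\]
Since $y\le 1.04$, it suffices to prove that $\pi\,\Real h_+/|h_+|^2>1.04$ uniformly in $x$ and in $y\in[1,1.04]$.

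The tool is the bound \cref{My}: $|h_+(\tau)-C_+|\le M(y)\le M(1)$ for $y\ge1$. Write $h_+=C_++\delta$ with $|\delta|\le M:=M(1)$. Then $\Real h_+\ge C_+-M$ and $|h_+|\le C_++M$. A cruder but cleaner route goes through the disc: $1/h_+$ lies in the image of the disc $|h-C_+|\le M$ under inversion. That image is the disc with centre $C_+/(C_+^2-M^2)$ and radius $M/(C_+^2-M^2)$. Hence
\[
\Real\frac{1}{h_+}\ \ge\ \frac{C_+-M}{C_+^2-M^2}=\frac{1}{C_++M}.
\]
So it suffices to check the single numerical inequality $\pi/(C_++M(1))>1.04$, i.e. $C_++M(1)<\pi/1.04\approx 3.0208$.

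Now compute the constants. $C_+=\zeta(2)+\sqrt{\zeta(4)}=\pi^2/6+\pi^2/\sqrt{90}\approx1.6449+1.0404=2.6853$, which leaves room $M(1)<0.335$. At $y=1$, $r=e^{-2\pi}\approx0.001867$. The $E_2$ term gives $\zeta(2)\cdot 24r/(1-r)^3\approx1.6449\cdot0.0450\approx0.0741$. For $E_4$, $\alpha\approx240r(1+4r)/(1-r)^5\approx0.4523$, so $1-\sqrt{1-\alpha}\approx0.2599$, and $\sqrt{\zeta(4)}$ times this is about $0.2704$. Thus $M(1)\approx0.3445$, which slightly exceeds $0.335$.

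This is the main obstacle: the crude disc bound fails by a small margin. I would address it in order of increasing effort.

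First, I would sharpen the estimate using the sign structure rather than absolute values. The deviation $\delta$ is dominated by the $q$-term: $E_2\approx1-24q$ and $\sqrt{E_4}\approx1+120q$. So $\delta\approx(-24\zeta(2)+120\sqrt{\zeta(4)})q=c\,q$ with $c\approx85.4$, a one-directional perturbation of size about $0.16$. I would therefore bound $h_+-C_+-cq$ by an explicit tail, of order $r^2$ times constants and hence tiny, together with the exact second-order term from $\sqrt{1+u}$. Then the deviation radius is about $0.16+\epsilon$, far below $0.335$.

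Second, if care is needed, replace the estimate $1-\sqrt{1-\alpha}$ by the direct bound $|\sqrt{E_4}-1|\le|E_4-1|/(1+\sqrt{1-\alpha})$, where $|E_4-1|\le240r/(1-r)^5$-type sums are evaluated more tightly. Note that $\alpha/(1+\sqrt{1-\alpha})$ already equals $1-\sqrt{1-\alpha}$, so the loss is really in $\alpha$ versus the true $|E_4-1|\approx240r\approx0.448$. The sharper, signed first-order approach is therefore the one I would commit to.

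With $M_{\mathrm{sharp}}\approx0.17$, we get $\pi/(C_++M_{\mathrm{sharp}})\approx\pi/2.855\approx1.100>1.04$. Since every bound is monotone in $r$, evaluating at $y=1$ covers the whole strip $[1,1.04]$. This gives $V_+<0$ there.
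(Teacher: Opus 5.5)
Your reduction is the same as the paper's. Both rest on $\Real(1/h_+)\ge 1/(C_++M(y))$; your disc-inversion derivation is a tidier way to get the inequality the paper checks directly. You also correctly observe that the one-shot estimate fails when it pairs the largest height $y=1.04$ with the largest radius $M(1)$, since $\pi/(C_++M(1))\approx1.036<1.04$. (Small slip: $\alpha(1)\approx0.4558$, so $M(1)\approx0.347$; the conclusion is unchanged.)

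Where you diverge is the fix. The paper needs no new estimate. In the bound $V_+\le y-\pi/(C_++M(y))$, $y$ increases while $M(y)$ decreases, so it splits $[1,1.04]$ at $1.02$. On $[1,1.02]$ it uses $y\le1.02$ with $M(1)$, and $C_++M(1)\approx3.032<\pi/1.02\approx3.080$. On $[1.02,1.04]$ it uses $y\le1.04$ with $M(1.02)\approx0.301$, and $C_++M(1.02)\approx2.986<\pi/1.04\approx3.021$. The margins are thin, but only the existing bound is evaluated, at two points.

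You instead shrink the radius by keeping the cancellation between the leading terms $-24\zeta(2)q$ and $120\sqrt{\zeta(4)}q$. The triangle inequality behind $M(y)$ throws this cancellation away, so it is the right source of slack. Your fallback of tightening $\alpha$ would gain essentially nothing, because $E_4(i)-1\approx0.4558\approx\alpha(1)$; the reason is not the one you give ($|E_4-1|$ is not $\approx 240r$ at $x=0$).

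As written, however, your decisive estimate is only asserted, and $M_{\mathrm{sharp}}\approx0.17$ is optimistic. The $q^2$-coefficient of $\sqrt{E_4}$ is $-6120$, which contributes about $0.02$ at $y=1$. The bound can be made rigorous as follows:
\begin{itemize}
\item Use the identity $\sqrt{1+u}-1-u/2=-u^2/\bigl(2(1+\sqrt{1+u})^2\bigr)$ with $u=E_4-1$, $|u|\le\alpha$ and $\Real\sqrt{1+u}\ge\sqrt{1-\alpha}$. This bounds the remainder by $\alpha^2/\bigl(2(1+\sqrt{1-\alpha})^2\bigr)\approx0.034$.
\item Add $|E_4-1-240q|\le240\sum_{n\ge2}\sigma_3(n)r^n\approx0.0076$ and $|E_2-1+24q|\le24\sum_{n\ge2}\sigma_1(n)r^n\approx0.0003$.
\item Together these give $|h_+-C_+|\le85.37\,r+0.041\approx0.20$ at $y=1$. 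Every piece is increasing in $r$, so the bound holds on all of $[1,1.04]$, and $\pi/(C_++0.20)\approx1.09>1.04$ there.
\end{itemize}

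So your route works and gives a uniform bound with a comfortable margin. The cost is a second-order expansion of $\sqrt{E_4}$ that the paper's two-piece subdivision avoids entirely.
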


\begin{proof}
By definition of $V_+$, we have
\[
V_+(x,y)=y-\Real\Big(\frac{\pi}{h_+(\tau)}\Big).
\]
We first claim 
\[
\Real\Big(\frac{1}{h_+}\Big)\ge\frac{1}{C_++M(y)}.
\]
To prove this, we let $h_+(\tau)=a+bi$ for some $a,b\in\RR$. Since $\Real(\frac{1}{h_+})=\frac{a}{a^2+b^2}$, our claim is equivalent to
\begin{align*}
a\big(C_++M(y)\big)\geq a^2+b^2.
\end{align*}
On the other hand, by \cref{My} we have
\[
(a-C_+)^2+b^2\le M(y)^{2}.
\]
Therefore, it suffices to show that
\[
M(y)^2-C_+^2+2aC_+\leq aC_++aM(y).
\]
Or equivalently,
\[
\big(C_+-M(y)\big)\big(C_++M(y)-a\big)\ge0,
\]
which follows from the facts that $M(y)<C_+$ and \cref{My}. Finally, we obtain an upper bound for $V_+(\tau)$
\[
V_+(\tau)\le y-\frac{\pi}{C_++M(y)}.
\]
Note that $M(y)$ is decreasing.
To obtain an estimate, we divide the interval $[1,1.04]$ into two subintervals $[1,1.02]$ and $[1.02,1.04]$. The corresponding uniform upper bounds are given by
\[
1.02-\frac{\pi}{C_++M(1)} \;\;\text{ and }\;\; 1.04-\frac{\pi}{C_++M(1.02)},
\]
respectively. Since both are negative, we conclude that $V_+(\tau)<0$ for all $\Img(\tau)\in[1,1.04]$.
\end{proof}

The corresponding argument for $g_-$ is entirely the same, so we omit the proof and record only the results.

\begin{lem}
The following are true.
\begin{itemize}
    \item For $\tau=x+iy\in\HH$ with $y\geq1.46$, we have $|g_-'(\tau)-1|<1.$
    \item For $\tau=x+iy\in\HH$ with $y\geq1.46$, we have $\Real g_-'(\tau)=\partial_y V_-(x,y)>0$.
    \item For $\tau=x+iy\in\HH$ with $y\in[1,1.46]$, we have $V_-(x,y)<0$.
\end{itemize}
\end{lem}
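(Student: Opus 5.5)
The three items are the $g_-$ analogues of \cref{lem1.04}, \cref{partialgplus} and \cref{imagegplus}, and I will prove them in that order by rerunning those arguments with $h_+$ and $C_+$ replaced by $h_-$ and $C_-$. The shared tool is \cref{My}, which gives $|h_-(\tau)-C_-|\le M(y)$ with $M(y)\le M(1)<C_-$ for $y\ge1$. Hence $h_-$ does not vanish for $y\ge 1$, $g_-$ is holomorphic there, and $|h_-(\tau)|\ge C_--M(y)>0$.

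\emph{First item.} I will write
\[
|g_-'(\tau)-1|=\frac{\pi|h_-'(\tau)|}{|h_-(\tau)|^2}.
\]
The numerator is bounded exactly as in \cref{upperboundhplus}, because $h_-'=G_2'-(\sqrt{G_4})'$ obeys the same triangle-inequality estimate: $|E_2'|$ and $|E_4'|$ are bounded by the $q$-series bounds, and $|\sqrt{E_4}|\ge\sqrt{1-\alpha}$. The denominator is at least $(C_--M(y))^2$. So the resulting upper bound is the same expression as in the proof of \cref{lem1.04} with $C_+$ replaced by $C_-$. This bound is decreasing in $y$, since $r=e^{-2\pi y}$ and $M$ are decreasing. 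It therefore suffices to evaluate it numerically at $y=1.46$ and check that the value is $<1$.

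\emph{Second item.} This is immediate from the first item and the Cauchy--Riemann equations: $\Real g_-'=\partial_yV_-$, and $|\Real g_-'-1|\le|g_-'-1|<1$.

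\emph{Third item.} I will repeat the proof of \cref{imagegplus}. The elementary argument there shows $\Real(1/h_-)\ge 1/(C_-+M(y))$, and it uses only $|h_--C_-|\le M(y)<C_-$. This gives
\[
V_-(x,y)\le y-\frac{\pi}{C_-+M(y)}.
\]
Since $M$ is decreasing, on a subinterval $[y_0,y_1]\subset[1,1.46]$ the right side is at most $y_1-\pi/(C_-+M(y_0))$. I will subdivide $[1,1.46]$ into finitely many subintervals and check numerically that each such bound is negative.

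The main obstacle is numerical, and it sits at both ends of the range. Because $C_-=\zeta(2)-\sqrt{\zeta(4)}\approx 0.604$ is much smaller than $C_+$, the margin $C_--M(y)$ is thin near $y=1$. Two things must be checked: that $M(1)<C_-$ really holds (it is asserted in the text), and that the derivative bound at $y=1.46$ actually drops below $1$. At the other end, $\pi/(C_-+M(y))\approx 5.2$ when $M$ is small, so the bound $y-\pi/(C_-+M(y))$ is comfortably negative near $y=1.46$. Near $y=1$, where $M$ is largest, I will use a fine enough subdivision so that the sampled bounds stay negative. The thresholds $1.04$ and $1.46$ are presumably tuned so that the derivative region and the negativity region overlap.
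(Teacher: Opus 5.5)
Your proposal is correct and is exactly the argument the paper omits when it says the $g_-$ case is ``entirely the same'': rerun \cref{lem1.04}, \cref{partialgplus} and \cref{imagegplus} with $(h_+,C_+)$ replaced by $(h_-,C_-)$, which works because $M(1)\approx 0.347<C_-\approx 0.605$. Numerically, the derivative bound at $y=1.46$ is about $0.986$ (it exceeds $1$ at $y=1.45$, which is why the threshold is $1.46$); for the third item no subdivision is needed, since already $1.46-\pi/(C_-+M(1))\approx 1.46-3.30<0$.
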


As before, we only treat the case of $g_+$, since the argument for $g_-$ is essentially the same.
By the definition of $V_+(x,y)$, we have
\[
\lim_{y\to\infty}V_+(x,y)=+\infty.
\]
Therefore, by \cref{imagegplus} and the intermediate value theorem, for each $x\in\RR$ there exists $y\in(1.04,\infty)$ for which $V_+(x+iy)=0$. Moreover, \cref{partialgplus} ensures that it is unique. Thus, we have a function $\phi_+:\RR\rightarrow (1.04,\infty)$ where $\phi_+(x)=y$ such that $V_+(x+iy)=0$. Thus, we have
\[
A_+:=\{\tau\in\HH\mid \Img\tau \geq 1,~|\Real\tau|\leq 1/2,~V_+(\tau)=0\}=\{x+i\phi_+(x)\mid x\in[-1/2,1/2]\}.
\]
Furthermore, by \cref{partialgplus} and the implicit function theorem, $y=\phi_+(x)$ is differentiable on $\RR$, and $\phi_+'(x)=-\frac{\partial_xV_+}{\partial_yV_+}$. Then, by the Cauchy-Riemann equation, we see that
\begin{align*}
\frac{d}{dx}(U_+(x,\phi_+(x)))&=(\partial_xU_+)(x,\phi_+(x))+\phi_+'(x)(\partial_yU_+)(x,\phi_+(x))
\\
&=\partial_xU_+-\partial_yU_+\frac{\partial_xV_+}{\partial_yV_+}
\\
&=\frac{\partial_xU_+\partial_yV_+-\partial_yU_+\partial_xV_+}{\partial_yV_+}
\\
&=\frac{(\partial_yV_+)^2+(\partial_xV_+)^2}{\partial_yV_+}>0
\end{align*}
for any $x\in\RR$. Hence, $U_+$ restricted to $A_+$ is increasing as a function of $x$ on $[-1/2,1/2]$. Similarly, we can define $A_-$ and $\phi_-$.

\begin{lem}\label{lem:gplusAplus}
We have $g_+(A_+)=[-1/2,1/2]$ and $g_-(A_-)=[-1/2,1/2]$.
\end{lem}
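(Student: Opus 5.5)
On $A_+$ we have $V_+=0$, so $g_+(A_+)$ is a subset of $\RR$. By definition, $g_+(A_+)=\{U_+(x,\phi_+(x))\mid x\in[-1/2,1/2]\}$. The computation just before the statement shows that $x\mapsto U_+(x,\phi_+(x))$ is continuous and strictly increasing on $[-1/2,1/2]$. Hence $g_+(A_+)$ is the closed interval $[U_+(-\tfrac12,\phi_+(-\tfrac12)),\,U_+(\tfrac12,\phi_+(\tfrac12))]$. It therefore suffices to compute the two endpoint values and show they equal $-1/2$ and $1/2$.

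The plan is to use the two symmetries of $G_2$ and $G_4$.

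\emph{Periodicity.} Since $G_2$ and $G_4$ are $1$-periodic, $h_+(\tau+1)=h_+(\tau)$ once the branch of $\sqrt{G_4}$ with positive real part is fixed, and this branch is itself periodic. Hence $g_+(\tau+1)=g_+(\tau)+1$. It follows that $V_+(x+1,y)=V_+(x,y)$, so by uniqueness of the zero in $y$ we get $\phi_+(x+1)=\phi_+(x)$, and also $U_+(x+1,\phi_+(x+1))=U_+(x,\phi_+(x))+1$.

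\emph{Reflection.} From $G_k(-\overline\tau)=\overline{G_k(\tau)}$, together with the positive-real-part choice of branch (which is compatible with complex conjugation), we get $h_+(-\overline\tau)=\overline{h_+(\tau)}$. This gives
$$g_+(-\overline\tau)=-\overline\tau-\frac{\pi i}{\overline{h_+(\tau)}}=-\overline{g_+(\tau)}.$$
Consequently $V_+(-x,y)=V_+(x,y)$, so $\phi_+$ is even, and $U_+(-x,\phi_+(-x))=-U_+(x,\phi_+(x))$.

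Set $u(x)=U_+(x,\phi_+(x))$. Then $u$ is odd and satisfies $u(x+1)=u(x)+1$. Therefore
$$u(1/2)=u(-1/2)+1=-u(1/2)+1,$$
so $u(1/2)=1/2$ and $u(-1/2)=-1/2$. Combined with monotonicity and continuity (the intermediate value theorem), this gives $g_+(A_+)=[-1/2,1/2]$.

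The case of $g_-$ is identical, using the corresponding lemma for $g_-$, which provides the function $\phi_-$ with values in $(1.46,\infty)$ and $\partial_yV_->0$ there. The same periodicity and reflection identities hold for $h_-$.

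The only delicate point is checking that the chosen branch of $\sqrt{G_4}$ (positive real part on $\Img\tau\ge1$) genuinely satisfies $\sqrt{G_4(\tau+1)}=\sqrt{G_4(\tau)}$ and $\sqrt{G_4(-\overline\tau)}=\overline{\sqrt{G_4(\tau)}}$. Both follow because these operations preserve positivity of the real part and squaring determines the root up to sign. Everything else is formal.
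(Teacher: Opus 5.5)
Your proposal is correct and follows essentially the same route as the paper: you reduce to the endpoint values using the monotonicity of $U_+$ along $A_+$, then combine $g_+(\tau+1)=g_+(\tau)+1$ with $g_+(-\overline{\tau})=-\overline{g_+(\tau)}$ to force $U_+(\pm 1/2,\phi_+(\pm 1/2))=\pm 1/2$. The only cosmetic differences are that you get $\phi_+(-1/2)=\phi_+(1/2)$ from the evenness of $\phi_+$ rather than from its periodicity, and that you spell out the branch compatibility the paper leaves as ``one can verify.''
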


\begin{proof}
We only treat the case of $g_+$. The case of $g_-$ follows in the same way.
By the definition of $A_+$, we have $g_+(\tau)=U_+(\tau)$ for $\tau\in A_+$. Since the restriction of $U_+$ to $A_+$ is increasing, it suffices to show that
\begin{equation}\label{U}
U_+(-1/2,\phi_+(-1/2))=-1/2, \;\;\; U_+(1/2,\phi_+(1/2))=1/2.
\end{equation}
Since $G_2$ and $G_4$ are invariant under the action of $T=\begin{psmallmatrix}1&1\\0&1\end{psmallmatrix}$, we have $g_+(\tau+1)=g_+(\tau)+1$. Then, $V_+(x+1,\phi_+(x))=V_+(x,\phi_+(x))=0$. However, for fixed $x$, there exists a unique $y>1.04$ such that $V_+(x,y)=0$. Thus, we have $\phi_+(x+1)=\phi_+(x)$. From $g_+(\tau+1)=g_+(\tau)+1$, we obtain
\begin{equation}\label{firstUidentity}
U_+\Big(-\frac{1}{2},\phi_+\Big(-\frac{1}{2}\Big)\Big)+1=U_+\Big(\frac{1}{2},\phi_+\Big(\frac{1}{2}\Big)\Big).
\end{equation}
On the other hand, by our choice of the branch of $\sqrt{G_4}$, one can verify that $g_+(-\overline{\tau})=-\overline{g_+(\tau)}$. Substituting $\tau=\frac{1}{2}+\phi_+(\frac{1}{2})i$ and taking real parts, we obtain
\begin{equation}\label{secondUidentity}
U_+\Big(-\frac{1}{2},\phi_+\Big(-\frac{1}{2}\Big)\Big)=U_+\Big(-\frac{1}{2},\phi_+\Big(\frac{1}{2}\Big)\Big)=-U_+\Big(\frac{1}{2},\phi_+\Big(\frac{1}{2}\Big)\Big).
\end{equation}
Combining \cref{firstUidentity} and \cref{secondUidentity}, we obtain \cref{U}.
\end{proof}

By \cref{lem:gplusAplus} and the identity $g_\pm(\tau+1)=g_\pm(\tau)+1$, we obtain the following.

\begin{thm}\label{thm:solution-g}
For each $\alpha\in\RR$, there exists a unique solution
$\tau\in\HH$ of $g_+(\tau)=\alpha$ satisfying $\Img(\tau)>1$.
Moreover, for any $n\in\ZZ$, $\alpha\in[-1/2+n,1/2+n]$ if and only if the unique solution satisfies $\Real(\tau)\in[-1/2+n,1/2+n]$.
The same holds for $g_-$.
\end{thm}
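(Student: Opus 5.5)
Existence and uniqueness come from the function $\phi_+$ already constructed together with \cref{lem:gplusAplus}. The localization of the real part then follows from monotonicity of $U_+$ along the curve $A_+$ and the periodicity $g_+(\tau+1)=g_+(\tau)+1$.

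\emph{Existence.} Extend $A_+$ to the whole curve $\tilde A_+\ceq\{x+i\phi_+(x)\mid x\in\RR\}$. On $\tilde A_+$ we have $V_+=0$, so $g_+$ is real there. Since $\phi_+(x+1)=\phi_+(x)$, we get $g_+(x+1+i\phi_+(x+1))=g_+(x+i\phi_+(x))+1$. The function $u(x)\ceq U_+(x,\phi_+(x))$ is strictly increasing on $\RR$; this is the derivative computation preceding \cref{lem:gplusAplus}, which is valid for every $x\in\RR$. By \cref{lem:gplusAplus}, $u$ maps $[-1/2,1/2]$ onto $[-1/2,1/2]$ with $u(\pm1/2)=\pm1/2$. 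Combining this with $u(x+1)=u(x)+1$, $u$ is an increasing bijection $\RR\to\RR$ satisfying $u([-1/2+n,1/2+n])=[-1/2+n,1/2+n]$ for every $n\in\ZZ$. Hence for each $\alpha\in\RR$ the point $\tau=u^{-1}(\alpha)+i\phi_+(u^{-1}(\alpha))$ solves $g_+(\tau)=\alpha$, and $\Img\tau>1.04>1$.

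\emph{Uniqueness.} Let $\tau=x+iy$ with $y>1$ and $g_+(\tau)=\alpha\in\RR$. Then $V_+(x,y)=0$.
\begin{itemize}
\item \cref{imagegplus} excludes $y\in(1,1.04]$.
\item For $y>1.04$, \cref{partialgplus} says $V_+(x,\cdot)$ is strictly increasing on $(1.04,\infty)$, so its zero there is unique and $y=\phi_+(x)$.
\end{itemize}
Thus $\tau\in\tilde A_+$ and $\alpha=u(x)$. Injectivity of $u$ then forces $x=u^{-1}(\alpha)$, which gives uniqueness.

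\emph{Localization of the real part.} Because $u$ is an increasing bijection preserving each interval $[-1/2+n,1/2+n]$, we have $\alpha\in[-1/2+n,1/2+n]$ if and only if $x=u^{-1}(\alpha)\in[-1/2+n,1/2+n]$. The same argument applies verbatim to $g_-$, using the threshold $1.46$ and the analogous lemma for $g_-$ in place of $1.04$.

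The only delicate point is the global monotonicity of $u$ on all of $\RR$ rather than only on $[-1/2,1/2]$. This needs $\partial_yV_+>0$ along the curve everywhere, which \cref{partialgplus} supplies because it holds for every $x$ with $y\ge1.04$. It also needs the branch of $\sqrt{G_4}$ to be globally continuous on $\Img\tau\ge1$, which holds because $\Real\sqrt{E_4}>0$ there by \cref{alpha}.
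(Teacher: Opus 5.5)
Your proposal is correct and follows the paper's argument. The paper derives this theorem directly from \cref{lem:gplusAplus} and the periodicity $g_\pm(\tau+1)=g_\pm(\tau)+1$, with uniqueness coming from the construction of $\phi_\pm$ via \cref{imagegplus,partialgplus}; you have written out those same steps in full, and the monotonicity point you flag is less delicate than you suggest, since $u(\pm1/2)=\pm1/2$, monotonicity on $[-1/2,1/2]$, and $u(x+1)=u(x)+1$ already make $u$ a strictly increasing bijection of $\RR$ preserving each $[-1/2+n,1/2+n]$.
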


\begin{lem}\label{lem:distinct-zero}
Let $\tau_+$ and $\tau_-$ be solutions of $g_+(\tau)=\alpha$ and $g_-(\tau)=\alpha$, respectively, for some $\alpha\in\RR$, such that $\Img(\tau_+)>1$ and $\Img(\tau_-)>1$. Then $\tau_+\neq\tau_-$.
\end{lem}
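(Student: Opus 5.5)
Suppose, for contradiction, that $\tau_+=\tau_-=:\tau$ with $\Img(\tau)>1$ and $g_+(\tau)=g_-(\tau)=\alpha$. By the definition $g_\pm(\tau)=\tau-\pi i/h_\pm(\tau)$, the equality $g_+(\tau)=g_-(\tau)$ forces
\[
\frac{\pi i}{h_+(\tau)}=\frac{\pi i}{h_-(\tau)},\qquad\text{that is,}\qquad h_+(\tau)=h_-(\tau).
\]
Since $h_+-h_-=2\sqrt{G_4}$, this gives $\sqrt{G_4(\tau)}=0$, and hence $G_4(\tau)=0$.

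To reach a contradiction, we show that $G_4$ has no zeros with $\Img(\tau)\ge1$. This follows from the estimate \cref{alpha} in \cref{lem:boundofEisenstein}: for $y\ge1$ we have $|E_4(\tau)-1|\le\alpha(y)<1$, so $E_4(\tau)\neq0$. Since $G_4=\zeta(4)E_4$ with $\zeta(4)\neq0$, also $G_4(\tau)\neq0$. Alternatively, one can cite the Rankin--Swinnerton-Dyer result that the zeros of $G_4$ are the $SL_2(\ZZ)$-orbit of $\rho=e^{2\pi i/3}$, none of which has imaginary part exceeding $\sqrt3/2<1$. The elementary bound is more self-contained and matches the branch conventions the paper has already fixed.

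One bookkeeping point needs checking: $h_\pm$ must be finite and nonzero at $\tau$, so that $g_\pm$ are defined and the cancellation above is legitimate. This is guaranteed by \cref{My}, since $|h_\pm(\tau)-C_\pm|\le M(y)<C_-\le C_\pm$ for $y\ge1$, which gives $h_\pm(\tau)\neq0$. The branch of $\sqrt{G_4}$ is well defined in this region, so $h_+-h_-=2\sqrt{G_4}$ holds as an identity of functions there.

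I do not expect a real obstacle. The only thing to watch is that the strict inequality $\alpha(y)<1$ on $y\ge1$ is exactly what the paper already records after \cref{alpha}, so nothing new needs to be computed.
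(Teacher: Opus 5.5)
Your proof is correct and is essentially the paper's argument: equating $g_+(\tau)=g_-(\tau)$ forces $\sqrt{G_4(\tau)}=0$, which contradicts the nonvanishing of $G_4$ on $\Img\tau>1$. The only difference is that the paper justifies this nonvanishing by citing Rankin--Swinnerton-Dyer, while your main justification uses the bound $|E_4(\tau)-1|\le\alpha(y)<1$ already established for $y\ge1$; this is equally valid and more self-contained.
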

\begin{proof}
Suppose that $\tau_0:=\tau_+=\tau_-$. Then
\[
\tau_0-\frac{\pi i}{G_2(\tau_0)+\sqrt{G_4(\tau_0)}}=\tau_0-\frac{\pi i}{G_2(\tau_0)-\sqrt{G_4(\tau_0)}}.
\]
It follows that $G_4(\tau_0)=0$. However, it follows from \cite{RS70} that $G_4(\tau)$ has no zeros in the region $\Img(\tau)>1$, which gives a contradiction.
\end{proof}

By \cref{lem:equiv-G22,thm:solution-g,lem:distinct-zero}, we have the following.

\begin{thm}\label{thm:solution-G22}
For each $g=\begin{pmatrix}
a & b \\ c & d
\end{pmatrix}\in SL_2(\ZZ)$ with $c\neq 0$, the equation $G_{2,2}(g\tau)=0$ has exactly two solutions in $\{\tau\in\HH\mid \Img(\tau)>1\}$.
\end{thm}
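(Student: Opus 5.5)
By \cref{lem:equiv-G22}, for $\tau\in\HH$ the condition $G_{2,2}(g\tau)=0$ is equivalent to $g_+(\tau)=-d/c$ or $g_-(\tau)=-d/c$. We first check that this is legitimate on $\{\Img\tau>1\}$. There we have $\alpha<1$, so $\sqrt{G_4}$ is the fixed branch with positive real part. The two signs in \cref{lem:equiv-G22} come from solving the quadratic $\frac12X^2+G_2X+G_{2,2}=0$, whose two roots are $X=-G_2\pm\sqrt{G_4}$. Either choice of square root gives the same pair of roots, so using the fixed branch loses no solutions. We also need $h_\pm\neq0$ on this region, and this follows from $|h_\pm-C_\pm|\le M(y)<C_-$ in \cref{My}.

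The solution set is therefore $g_+^{-1}(-d/c)\cup g_-^{-1}(-d/c)$ intersected with $\{\Img\tau>1\}$. Apply \cref{thm:solution-g} with $\alpha=-d/c\in\QQ\subset\RR$. It gives exactly one $\tau_+$ with $\Img\tau_+>1$ and $g_+(\tau_+)=-d/c$, and exactly one $\tau_-$ with $\Img\tau_->1$ and $g_-(\tau_-)=-d/c$. Any solution with $\Img\tau>1$ is one of these two, so there are at most two solutions.

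To get exactly two we need $\tau_+\neq\tau_-$. This is \cref{lem:distinct-zero}: equality would force $G_4(\tau_0)=0$ with $\Img\tau_0>1$, which \cite{RS70} excludes. Hence the equation has exactly two solutions.

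The delicate point is the uniqueness part of \cref{thm:solution-g}, which I take as given. It rests on two facts. First, $V_\pm<0$ on the strip $1\le y\le1.04$ (respectively $1\le y\le1.46$), so no solution lies there. Second, $\partial_yV_\pm>0$ above that strip, so $V_\pm=0$ cuts out exactly one graph $y=\phi_\pm(x)$, along which $U_\pm$ is strictly increasing. With these in hand the argument above is purely combinatorial.
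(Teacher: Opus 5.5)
Your proposal is correct and takes the paper's approach: the paper proves the statement by combining \cref{lem:equiv-G22} (reduction to $g_\pm(\tau)=-d/c$), \cref{thm:solution-g} (exactly one solution for each sign with $\Img\tau>1$), and \cref{lem:distinct-zero} (the two solutions are distinct), just as you do. Your added checks are also sound and make explicit details the paper leaves implicit: the pair $-G_2\pm\sqrt{G_4}$ does not depend on the choice of branch, and $h_\pm\neq0$ for $\Img\tau\ge1$ because $|h_\pm-C_\pm|\le M(y)<C_-$.
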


Since the interior of the Ford circle corresponding to $a/c$ is the image of
$\{\tau\in\HH\mid \Img(\tau)>1\}$ under $g=\begin{pmatrix}
a & b \\ c & d
\end{pmatrix}\in SL_2(\ZZ)$,
\cref{thm:solution-G22} implies that $G_{2,2}$ has exactly two zeros
inside the Ford circle corresponding to $a/c$.
Finally, by \cref{thm:simple-zero}, we obtain the desired result. This completes the proof of \cref{thm:fordcircle}.



\bibliographystyle{myamsalpha}
\bibliography{ref}

@article{BS10,
author={El Basraoui, Abdellah and Sebbar, Abdellah},
title={Zeros of the Eisenstein series $E_2$},
journal={Proc. Amer. Math. Soc.},
volume={138},
number={7},
year={2010},
pages={2289--2299},
doi={10.1090/S0002-9939-10-10300-1}}

@article{MNS07,
author={Miezaki, Tsuyoshi and Nozaki, Hiroshi and Shigezumi, Junichi},
title={On the zeros of Eisenstein series for $\Gamma_0^*(2)$ and $\Gamma_0^*(3)$},
journal={J. Math. Soc. Japan},
volume={59},
number={3},
year={2007},
pages={693--706},
doi={10.2969/jmsj/05930693}}

@article{BP25,
author={Bhowmik, Tapas and Pathak, Siddhi S.},
title={A note on transcendence of special values of functions related to modularity},
journal={Acta Arithmetica},
volume={218},
year={2025},
pages={1--23},
doi={10.4064/aa240128-7-9}}

@incollection{GKZ06,
author={Gangl, Herbert and Kaneko, Masanobu and Zagier, Don},
title={Double zeta values and modular forms},
booktitle={Automorphic Forms and Zeta Functions},
editor={B{\"o}cherer, Siegfried and Ibukiyama, Tomoyoshi and Kaneko, Masanobu and Sato, Fumihiro},
publisher={World Scientific},
address={Hackensack, NJ},
year={2006},
pages={71--106},
doi={10.1142/9789812774415_0004}}

@article{Gel34,
author={Gelfond, Aleksandr},
title={Sur le septi{\`e}me probl{\`e}me de Hilbert},
journal={Bulletin de l'Acad{\'e}mie des Sciences de l'URSS, Classe des Sciences Math{\'e}matiques et Naturelles},
number={4},
year={1934},
pages={623--634}}

@article{GO22,
author={Gun, Sanoli and Oesterl{\'e}, Joseph},
title={Critical points of Eisenstein series},
journal={Mathematika},
volume={68},
number={1},
year={2022},
pages={259--298},
doi={10.1112/mtk.12124}}

@article{HI17,
author={Hoffman, Michael E. and Ihara, Kentaro},
title={Quasi-shuffle products revisited},
journal={Journal of Algebra},
volume={481},
year={2017},
pages={293--326},
doi={10.1016/j.jalgebra.2017.03.005}}

@article{IJT14,
author={Imamoglu, {\"O}zlem and Jermann, Jonas and T{\'o}th, {\'A}rp{\'a}d},
title={Estimates on the zeros of $E_2$},
journal={Abhandlungen aus dem Mathematischen Seminar der Universit{\"a}t Hamburg},
volume={84},
number={1},
year={2014},
pages={123--138},
doi={10.1007/s12188-014-0091-9}}

@article{Kum21,
author={Kumar, K. Senthil},
title={Linear dependence of quasi-periods over the rationals},
journal={Comptes Rendus. Math{\'e}matique},
volume={359},
number={4},
year={2021},
pages={409--414},
doi={10.5802/crmath.171}}

@article{Kwa20,
author={Kwa{\'s}nicki, Mateusz},
title={A new class of bell-shaped functions},
journal={Transactions of the American Mathematical Society},
volume={373},
number={4},
year={2020},
pages={2255--2280},
doi={10.1090/tran/7825}}

@article{KS22,
author={Kwa{\'s}nicki, Mateusz and Simon, Thomas},
title={Characterisation of the class of bell-shaped functions},
journal={Mathematische Zeitschrift},
volume={301},
number={3},
year={2022},
pages={2659--2683},
doi={10.1007/s00209-022-02997-7}}

@article{Meh13,
author={Meher, Jaban},
title={Some remarks on zeros of quasimodular forms},
journal={Archiv der Mathematik},
volume={101},
year={2013},
pages={121--127},
doi={10.1007/s00013-013-0536-x}}

@article{Nes96,
author={Nesterenko, Yuri V.},
title={Modular functions and transcendence questions},
journal={Sbornik: Mathematics},
volume={187},
number={9},
year={1996},
pages={1319--1348},
note={English translation of Mat. Sb. 187 (1996), no. 9, 65--96},
doi={10.1070/SM1996v187n09ABEH000158}}

@article{RS70,
author={Rankin, F. K. C. and Swinnerton-Dyer, H. P. F.},
title={On the zeros of Eisenstein series},
journal={Bulletin of the London Mathematical Society},
volume={2},
number={2},
year={1970},
pages={169--170},
doi={10.1112/blms/2.2.169}}

@article{Sch37,
author={Schneider, Theodor},
title={Arithmetische Untersuchungen elliptischer Integrale},
journal={Mathematische Annalen},
volume={113},
year={1937},
pages={1--13},
doi={10.1007/BF01571618}}

@article{Sch35,
author={Schneider, Theodor},
title={Transzendenzuntersuchungen periodischer Funktionen. I. Transzendenz von Potenzen},
journal={Journal f{\"u}r die reine und angewandte Mathematik},
volume={172},
year={1935},
pages={65--69},
doi={10.1515/crll.1935.172.65}}

@article{IR24,
author={van Ittersum, Jan-Willem and Ringeling, Berend},
title={Critical points of modular forms},
journal={International Journal of Number Theory},
volume={20},
number={10},
year={2024},
pages={2695--2728},
doi={10.1142/S1793042124501288}}

@article{Sug26,
author={Sugibayashi, Naoki},
title={Critical points of the Eisenstein series for the Fricke group of level 2},
journal={Int. J. Number Theory},
volume={22},
year={2026},
number={3},
pages={489--518},
doi={10.1142/S1793042126500296}}

@article{IR25,
author={van Ittersum, Jan-Willem and Ringeling, Berend},
title={On the zeros of odd weight Eisenstein series},
journal={Mathematika},
volume={71},
year={2025},
number={1},
pages={},
doi={10.1112/mtk.70004}}

@article{IL26,
author={Im, Bo-Hae and Lee, Wonwoong},
title={On the real zeros of depth 1 quasimodular forms},
journal={J. Math. Anal. Appl.},
volume={554},
year={2026},
number={2},
pages={129991},
doi={10.1016/j.jmaa.2025.129991}}

@misc{Bac26,
author        = {Bachmann, Henrik},
title         = {Multiple Zeta Values},
year          = {2026},
eprint        = {2608.02230},
archivePrefix = {arXiv},
primaryClass  = {math.NT},
note          = {arXiv:2608.02230},
}

@article{AOS24,
author={Amdeberhan, Tewodros and Ono, Ken and Singh, Ajit},
title={MacMahon's sums-of-divisors and allied {$q$}-series},
journal={Adv. Math.},
volume={452},
year={2024},
pages={109820},
doi={10.1016/j.aim.2024.109820}}

@article{WY14,
author={Wood, Rachael and Young, Matthew P.},
title={Zeros of the weight two Eisenstein series},
journal={J. Number Theory},
volume={143},
year={2014},
pages={320--333},
doi={10.1016/j.jnt.2014.04.007}}

@article{BG12,
author={Balasubramanian, R. and Gun, Sanoli},
title={On zeros of quasi-modular forms},
journal={J. Number Theory},
volume={132},
year={2012},
pages={2228--2241},
doi={10.1016/j.jnt.2012.04.013}}

@article{SS12,
author={Saber, Hicham and Sebbar, Abdellah},
title={On the critical points of modular forms},
journal={J. Number Theory},
volume={132},
year={2012},
pages={1780--1787},
doi={10.1016/j.jnt.2012.03.004}}

@article{Chu76,
author={Chudnovsky, G. V.},
title={Algebraic independence of constants connected with the exponential and the elliptic functions},
journal={Dokl. Akad. Nauk Ukrain. SSR Ser. A},
number={8},
year={1976},
pages={698--701, 767}}

@book{Sil94,
  author    = {Silverman, Joseph H.},
  title     = {Advanced Topics in the Arithmetic of Elliptic Curves},
  series    = {Graduate Texts in Mathematics},
  volume    = {151},
  publisher = {Springer-Verlag},
  address   = {New York},
  year      = {1994}
}

@article{AGOS25,
author={Amdeberhan, Tewodros and Griffin, Michael and Ono, Ken and Singh, Ajit},
title={Traces of partition Eisenstein series},
journal={Forum Math.},
volume={37},
number={6},
year={2025},
pages={1835--1847},
doi={10.1515/forum-2024-0388}}

@article{AOS25,
author={Amdeberhan, Tewodros and Ono, Ken and Singh, Ajit},
title={Derivatives of theta functions as traces of partition Eisenstein series},
journal={Nagoya Math. J.},
volume={258},
year={2025},
pages={284--295},
doi={10.1017/nmj.2024.30}}

@article {Noz08,
AUTHOR = {Nozaki, Hiroshi},
TITLE = {A separation property of the zeros of {E}isenstein series for
{${\rm SL}(2,\Bbb Z)$}},
JOURNAL = {Bull. Lond. Math. Soc.},
FJOURNAL = {Bulletin of the London Mathematical Society},
VOLUME = {40},
YEAR = {2008},
NUMBER = {1},
PAGES = {26--36},
ISSN = {0024-6093,1469-2120},
MRCLASS = {11F11 (11F03)},
MRNUMBER = {2409175},
MRREVIEWER = {Scott\ Ahlgren},
DOI = {10.1112/blms/bdm117},
URL = {https://doi-org.libproxy.kangwon.ac.kr/10.1112/blms/bdm117},
}

@article {GKPVXZ21,
AUTHOR = {Griffin, Trevor and Kenshur, Nathan and Price, Abigail and
Vandenberg-Daves, Bradshaw and Xue, Hui and Zhu, Daozhou},
TITLE = {Interlacing of zeros of {E}isenstein series},
JOURNAL = {Kyushu J. Math.},
FJOURNAL = {Kyushu Journal of Mathematics},
VOLUME = {75},
YEAR = {2021},
NUMBER = {2},
PAGES = {249--272},
ISSN = {1340-6116,1883-2032},
MRCLASS = {11F11 (11F03)},
MRNUMBER = {4323910},
MRREVIEWER = {Paul\ M.\ Jenkins},
DOI = {10.2206/kyushujm.75.249},
URL = {https://doi-org.libproxy.kangwon.ac.kr/10.2206/kyushujm.75.249},
}

@incollection{KZ95,
author={Kaneko, Masanobu and Zagier, Don},
title={A generalized Jacobi theta function and quasimodular forms},
booktitle={The Moduli Space of Curves (Texel Island, 1994)},
series={Progress in Mathematics},
volume={129},
publisher={Birkh{\"a}user},
address={Boston},
year={1995},
pages={165--172}}

@article{GMR11,
author={Gun, Sanoli and Ram Murty, M. and Rath, Purusottam},
title={Algebraic independence of values of modular forms},
journal={International Journal of Number Theory},
volume={7},
number={4},
year={2011},
pages={1065--1074},
doi={10.1142/S1793042111004769}}

\end{document}